\documentclass[reqno]{amsart}
\usepackage[top=1.1in, bottom=1in, left=1in, right=1in]{geometry}
\usepackage{amsfonts}
\usepackage{amssymb}
\usepackage{amsthm}
\usepackage{amsmath}
\usepackage{mathrsfs}
\usepackage{color}
\usepackage{enumerate}
\usepackage[numbers,sort&compress]{natbib}
\usepackage{pdfsync}
\usepackage{esint}
\usepackage{graphicx}
\usepackage{float}
\usepackage{caption}
\usepackage{subfigure}

\allowdisplaybreaks

\usepackage{tikz} 
\usetikzlibrary{calc}
\usetikzlibrary{intersections}
\usetikzlibrary{patterns}

\usepackage[colorlinks,
            linkcolor=black,       
            anchorcolor=blue,      
            citecolor=blue,        
            ]{hyperref}

\makeatother

\numberwithin{equation}{section}

\newtheorem{theorem}{Theorem}[section]
\newtheorem{definition}[theorem]{Definition}

\newtheorem{lemma}[theorem]{Lemma}
\newtheorem{remark}[theorem]{Remark}

\newtheorem{proposition}[theorem]{Proposition}
\newtheorem{corollary}[theorem]{Corollary}
\numberwithin{equation}{section}

\newcommand*{\Id}{\ensuremath{\mathrm{Id}}}
\newcommand*{\supp}{\ensuremath{\mathrm{supp\,}}}

\newcommand{\aint}{{\fint}}

\newcommand{\T}{{\mathbb{T}}}

\newcommand{\rr}{\mathring{R}}
\newcommand{\tr}{\mathring{\wt R}}
\newcommand{\ru}{\mathring{R}_q}

\newcommand{\p}{\partial}
\renewcommand{\P}{\mathbb{P}}

\renewcommand{\div}{{\mathrm{div}}}
\newcommand{\curl}{{\mathrm{curl}}}

\renewcommand{\u}{{u_q}}

\renewcommand{\d}{{\rm d}}

\newcommand{\g}{g_{(\tau)}}

\newcommand{\norm}[1]{\lVert#1\rVert}
\newcommand{\abs}[1]{|#1|}

\newcommand{\la}{\lambda_q}
\newcommand{\laq}{\lambda_{q+1}}

\newcommand{\rs}{r_{\perp}}
\newcommand{\rp}{r_{\parallel}}
\newcommand{\va}{\varepsilon}

\newcommand{\wo}{w_{q+1}^{(o)}}
\newcommand{\dqo}{w_{q+1}^{(o)}}

\newcommand{\thq}{\theta_{q+1}}
\newcommand{\mq}{m_{q+1}}

\newcommand{\dist}{\operatorname{dist}}

\newcommand{\R}{{\mathbb R}}
\newcommand{\lbb}{\overline{\lambda}}

\def\a{{\alpha}}
\def\vf{{\varphi}}
\def\lbb{\lambda}
\def\wt{\widetilde}
\def\9{{\infty}}
\def\ve{{\varepsilon}}
\def\na{{\nabla}}
\def\bbr{{\mathbb{R}}}

\def\({\left(}
\def\){\right)}

\begin{document}
	
\title[] {Sharp non-uniqueness for the 3D hyperdissipative Navier-Stokes equations:
above the Lions exponent}

\author{Yachun Li}
\address{School of Mathematical Sciences, CMA-Shanghai, MOE-LSC, and SHL-MAC,  Shanghai Jiao Tong University, China.}
\email[Yachun Li]{ycli@sjtu.edu.cn}
\thanks{}

\author{Peng Qu}
\address{School of Mathematical Sciences $\&$ Shanghai Key Laboratory for Contemporary Applied Mathematics, Fudan University, China.}
\email[Peng Qu]{pqu@fudan.edu.cn}
\thanks{}

\author{Zirong Zeng}
\address{School of Mathematical Sciences, Shanghai Jiao Tong University, China.}
\email[Zirong Zeng]{beckzzr@sjtu.edu.cn}
\thanks{}

\author{Deng Zhang}
\address{School of Mathematical Sciences, CMA-Shanghai, Shanghai Jiao Tong University, China.}
\email[Deng Zhang]{dzhang@sjtu.edu.cn}
\thanks{}

\keywords{Convex integration,
hyperdissipative Navier-Stokes equations, Lady\v{z}enskaja-Prodi-Serrin condition,
 non-uniqueness,
partial regularity }

\subjclass[2010]{35A02,\ 35Q30,\ 76D05.}

\begin{abstract}
	We study the 3D hyperdissipative Navier-Stokes equations on the torus,
	where the viscosity exponent $\alpha$ can be larger than the Lions exponent $5/4$.
	It is well-known that,
	due to Lions \cite{lions69},
	for any $L^2$ divergence-free initial data,
	there exist unique smooth Leray-Hopf solutions when $\alpha \geq 5/4$.
	We prove that even in this high dissipative regime,
	the uniqueness would fail in the supercritical spaces $L^\gamma_tW^{s,p}_x$,
	in view of the generalized Lady\v{z}enskaja-Prodi-Serrin condition.
	The non-uniqueness is proved in the strong sense
	and, in particular, yields the sharpness at two endpoints
	$(3/p+1-2\alpha, \infty, p)$
and $(2\alpha/\gamma+1-2\alpha, \gamma, \infty)$.
	Moreover, the constructed solutions are allowed to coincide with the unique
	Leray-Hopf solutions near the initial time
	and, more delicately,
	admit the partial regularity outside a fractal set of singular times
	with zero Hausdorff $\mathcal{H}^{\eta_*}$ measure,
	where $\eta_*>0$ is any given small positive constant.
	These results also provide the sharp non-uniqueness
	in the supercritical Lebesgue and Besov spaces.
	Furthermore, the strong vanishing viscosity result is obtained
	for the hyperdissipative Navier-Stokes equations.
\end{abstract}

\maketitle

{
\tableofcontents
}

\section{Introduction and main results}

\subsection{Background} \label{Subsec-intro}

We consider the
three-dimensional hyperdissipative Navier-Stokes equations on the torus $\T^3:=[-\pi,\pi]^3$,
\begin{equation}\label{equa-NS}
	\left\{\aligned
	&\p_tu +\nu(-\Delta)^{\alpha} u+(u\cdot \nabla )u + \nabla P=0,  \\
	& \div u = 0,
	\endaligned
	\right.
\end{equation}
where  $u=(u_1,u_2,u_3)^\top(t,x)\in \R^3 $
and $P=P(t,x)\in \R$
represent the velocity field and pressure of the fluid, respectively,
$\nu> 0$ is the viscous coefficient,
$\alpha\in [1,2)$,
and $(-\Delta)^{\alpha}$ is the fractional Laplacian defined
via the Fourier transform on the flat torus
$$
\mathcal{F}((-\Delta)^{\alpha}u)(\xi)=|\xi|^{2\alpha}\mathcal{F}(u)(\xi),\ \ \xi\in \mathbb{Z}^3.
$$
In particular,
\eqref{equa-NS} are the classical Navier-Stokes equations  (NSE for short)  when $\alpha =1$,
and the Euler equations when the viscosity vanishes, i.e., $\nu=0$,

\begin{equation}\label{equa-Euler}
	\left\{\aligned
	&\p_tu +(u\cdot \nabla )u + \nabla P=0,  \\
	& \div u = 0.
	\endaligned
	\right.
\end{equation}

In the groundbreaking paper \cite{leray1934},
Leray constructed the weak solutions to NSE in the space
$L^\infty_tL^2_x\cap L^2_t\dot{H}_x^1$,
which obey the energy inequality
\begin{align}\label{nsenergy}
	\|u(t)\|_{L^2}^2
	+ 2 \nu \int_{t_0}^t \|(-\Delta)^\frac \alpha 2 u(s)\|_{L^2}^2 \d s
	\leq \|u(t_0)\|_{L^2}^2
\end{align}
with $\alpha =1$, for any $t>0$ and a.e. $t_0\geq 0$.
This class of weak solutions is now referred to as Leray-Hopf weak solutions,
due to the important contributions by Hopf \cite{hopf1951}
in the case of bounded domains.
Moreover,
Leray \cite{leray1934} proved that
for every such weak solution,
there exists a closed set $S\subseteq \mathbb{R}^+$
of measure zero, such that
the solution is smooth on
$\mathbb{R}^3\times (\mathbb{R}^+\setminus S)$,
and the $1/2$ Hausdorff measure $\mathcal{H}^{1/2}(S)=0$.

Since then,
there has been a vast amount of literature on the uniqueness,
regularity and
global existence of solutions to NSE in wider spaces.
Until now, the uniqueness of Leray-Hopf solutions still remains
a challenging problem.

The scaling consideration usually suggests a heuristic way
to find suitable functional spaces for the solvability of partial differential equations.
It provides a useful classification of subcritical,
critical and supercritical spaces.
A general philosophy is that equations are well-posed
in the subcritical spaces,
while solutions may exhibit ill-posedness phenomena in the supercritical spaces.
We refer to the papers \cite{K00,K17} of Klainerman
for comprehensive discussions.
For the interested readers,
we refer to \cite{CCT03,KPV01,BT08,XZ22}
and the references therein
for the norm-inflation and discontinuity of solution map
for nonlinear Schr\"odinger equations, KdV equations
and nonlinear wave equations in supercritical spaces.

For the current hyperdissipative NSE \eqref{equa-NS},
it is invariant under the scaling
\begin{align}  \label{scaling-hyperNSE}
   u(t,x) \mapsto \lambda^{2\alpha-1} u (\lambda^{2\alpha}t, \lambda x),\ \
   P(t,x) \mapsto \lambda^{4\alpha-2} P (\lambda^{2\alpha}t, \lambda x).
\end{align}
This suggests the {\it critical space} $\mathbb{X}$ for \eqref{equa-NS}
if the corresponding norm of solutions is
invariant under the scaling \eqref{scaling-hyperNSE}.

One typical critical space is $\mathbb{X}=C_tL^2_x$
if $\alpha = 5/4$.
In the (sub)critical regime where $\alpha \geq 5/4$,
a remarkable result proved by Lions \cite{lions69} is that,
for any divergence-free $L^2$ initial data,
the hyperdissipative NSE
\eqref{equa-NS} admits unique  smooth Leray-Hopf solutions.
See also the global strong solvability by
Mattingly-Sinai \cite{MS99}.
The well-posedness of \eqref{equa-NS}
also holds for $\alpha$ slightly below $5/4$
due to Tao \cite{tao09}.
Moreover, Katz-Pavlovi\'c \cite{KP02} proved that
the Hausdorff dimension of the singular set at the time of first blow-up is at most $5-4\alpha$.

In contrast, for the supercritical regime where $\alpha <5/4$,
in the breakthrough work \cite{bv19b} Buckmaster-Vicol
first proved the non-uniqueness of finite energy weak solutions to NSE (i.e. $\alpha =1$),
based on the convex integration scheme.
The approach of convex integration was introduced to 3D Euler equations
in the pioneering papers by De Lellis and Sz\'ekelyhidi \cite{dls09, dls10}
and has been proven very successful
in the fluid community.
In particular, a recent milestone is the resolution of the Onsager conjecture,
developed in \cite{B15,bdis15,bdls16,dls14,dls13}
and finally settled by Isett \cite{I18} and Buckmaster-De Lellis-Sz\'ekelyhidi-Vicol \cite{bdsv19}.

The crucial ingredient introduced by Buckmaster-Vicol \cite{bv19b} is the $L^2_x$-based intermittent spatial building blocks,
which in particular permit to control the hard dissipativity term $(-\Delta)u$ in NSE.
By making the full use of the spatial intermittency,
Luo-Titi \cite{lt20} proved the non-uniqueness of weak solutions in $C_tL^2_x$
to hyperdissipative NSE \eqref{equa-NS},
whenever the exponent $\alpha$ is less than the Lions exponent,
i.e., $\alpha <5/4$.
Furthermore, in the recent work \cite{bcv21},
Buckmaster-Colombo-Vicol constructed the non-unique weak solutions to \eqref{equa-NS}
when $\alpha \in [1,5/4)$, which are smooth
outside a singular set in time with Hausdorff dimension less than one.
The intermittent convex integration also has been applied to various other models.
We refer, e.g., to \cite{lq20} for 2D hypoviscous NSE,
\cite{luo19} for stationary NSE,
and \cite{CDR18,DR19} for the non-uniqueness of Leray solutions to hypodissipative NSE.
See the surveys \cite{dls09,bv21,bv19r,dls17} for other interesting applications.
We also refer to another method by Jia and \v{S}ver\'ak \cite{js14,js15}
for the non-uniqueness of Leray-Hopf solutions under a certain assumption
for the linearized Navier-Stokes operator,
and the very recent work \cite{ABC21}
for the non-uniqueness of Leray solutions
of the forced NSE.

Hence, in view of the works \cite{lions69,lt20,bcv21},
$\alpha=5/4$ is exactly the {\it critical threshold of viscosity} for the well-posedness
in $C([0,T];L^2)$
for hyperdissipative NSE \eqref{equa-NS}.

Another type of critical spaces extensively used is the mixed Sobolev space
$\mathbb{X}=L^\gamma_t\dot{W}^{s,p}_x$,
where the exponents $(s,\gamma, p)$ satisfy
\begin{align}   \label{critical-LPS-hyperNSE}
    \frac{2\alpha}{\gamma} + \frac{3}{p} = 2\alpha -1 +s.
\end{align}
In the case where $\gamma =\infty$
we may also consider $\mathbb{X}=C_t\dot{W}^{\frac 3p+1-2\alpha,p}_x$.
In particular,
the mixed Lebesgue space $L^\gamma_tL^p_x$
(or, more generally, Strichartz space frequently used
for dispersive equations, like the Schr\"odinger equations
and wave equations) is critical for the classical NSE,
when the exponents $(\gamma, p)$ satisfy the well-known
{\it Lady\v{z}enskaja-Prodi-Serrin condition}
\begin{align} \label{critical-LPS-NSE}
	\frac 2 \gamma + \frac 3p =1.
\end{align}

Due to the weak-strong uniqueness (\cite{prodi59,serrin62,L67,SvW84}),
the extra integrability in (sub)critical spaces $L^\gamma_tL^p_x$
with $2/ \gamma +  3 / p \leq 1$, $p\in[3,\infty)$,
suffices to guarantee the uniqueness
in the class of Leray-Hopf solutions to NSE.
The regularity in the delicate endpoint case $L^\infty_tL^3_x$ was solved by
Escauriaza-Seregin-\v{S}ver\'ak  \cite{iss03}.
See also \cite{R02} for the weak-strong uniqueness
in $L^\gamma_tW^{s,p}_x$
when $(s,\gamma,p)$ satisfies \eqref{critical-LPS-hyperNSE}
with $\alpha=1$,
\cite{GIP03} for the case of critical Besov spaces,
and \cite{LR16} for quite general Prodi-Serrin uniqueness criterion.
Furthermore,
due to the works of \cite{FJR72,FLRT00,LM01},
any weak solution to NSE  (in the distributional sense,
see Definition \ref{Def-Weak-Sol} below)
in the (sub)critical spaces $L^\gamma_tL^p_x$
is automatically the unique regular Leray-Hopf solution,
see \cite[theorem 1.3]{cl20.2} for the precise statements on the torus.

There are also many uniqueness results for the hyperdissipative NSE,
under the generalized Lady\v{z}enskaja-Prodi-Serrin condition \eqref{critical-LPS-hyperNSE}
or in the critical spaces.
We refer to, for instance,
\cite{Z07} for the mixed space $L^\gamma_tL^p_x$
when $2\alpha/\gamma+3/p\leq 2\alpha-1$,
and \cite{W06} for the Besov space $\dot{B}^{1-2\alpha+\frac dp}_{p,q}$.

In contrast to the positive side,
many questions remain open in the supercritical regime.
In the recent remarkable paper \cite{cl20.2},
Cheskidov-Luo proved the sharp non-uniqueness of NSE
near the endpoint $(s,\gamma, p)=(0,2,\infty)$
of the Lady\v{z}enskaja-Prodi-Serrin condition \eqref{critical-LPS-NSE}.
The proof in particular exploits the temporal intermittency
in the convex integrations scheme.
See also \cite{cl21,cl22} for the application of temporal intermittency
to transport equations,
and \cite{lzz21} for the case of MHD equations.
The non-uniqueness in \cite{cl20.2} is indeed proved in the {\it strong} sense that
every weak solution is non-unique,
and the Hausdorff dimension of the corresponding singularity set
in time can be less than any given small constant $\ve>0$.
It is also conjectured by Cheskidov-Luo  \cite{cl20.2}
that the non-uniqueness of weak solutions
shall be valid in the full range of the supercritical regime
$2/\gamma + 3/p >1$.
More recent progress has been made in \cite{cl21.2}
for the other endpoint
$(s,\gamma, p)=(0,\infty, 2)$ for the 2D NSE.

It is worth noting that,
the endpoint case $(s,\gamma,p)=(3/p+1-2\alpha, \infty, p)$
corresponds exactly to the critical space $C_tL^2_x$ for
equation \eqref{equa-NS} when $\alpha =5/4$ and $p=2$.

The significance of the endpoint $(s,\gamma,p)=(3/p+1-2\alpha,\infty,p)$
can be also seen from
its close relationship to more general critical Besov and Triebel-Lizorkin spaces.
Specifically,
for the classical NSE when $\alpha =1$,
one has the embedding of critical spaces:
\begin{align} \label{embed-critical-NSE}
	L^3 \hookrightarrow \dot{B}^{\frac 3p-1}_{p | 2\leq p<\infty, \infty}
	\hookrightarrow BMO^{-1} (=\dot{F}^{-1}_{\infty,2})
	\hookrightarrow \dot{B}^{-1}_{\infty,\infty}.
\end{align}
The solvability of NSE in these critical spaces has
attracted significant interests in literature.
It is usually obtained by the mild formulation of equations,
dating back to Kato and Fujita \cite{FK64,K84}.
One well-known critical space is $BMO^{-1}$,
due to Koch and Tataru \cite{kt01}.
See the monographs \cite{C04,LR16,M99} for more details.

It was a long standing problem whether
NSE is well-posed in the largest critical space $\dot{B}^{1-2\alpha}_{\infty,\infty}$
(\cite{C04,M99}).
Quite surprisingly,
the negative answer was provided by
Bourgain-Pavlovi\'c \cite{BP08},
by showing a phenomenon of norm-inflation instability in $\dot{B}^{-1}_{\infty,\infty}$
for NSE.
Germin \cite{G08} also proved that the solution map associated to NSE is not $C^2$
in the space $\dot{B}^{-1}_{\infty,q}$, $q>2$.
Afterwards, the norm-inflation in $\dot{B}^{-1}_{\infty,q}$ for $q\geq 1$
was proved by Yoneda \cite{Y10} and Wang \cite{W15}.
The ill-posedness phenomena also exhibit for the hyperdissipative NSE.
There exist discontinuous Leray-Hopf solutions in the critical space
$B^{1-2\alpha}_{\infty, \infty}$
with $\alpha \in [1,5/4)$,
due to Cheskidov-Shvydkoy \cite{CS12}, with arbitrarily small initial data.
Cheskidov-Dai \cite{CD14} also proved the norm-inflation instability
in $\dot{B}^{-s}_{\infty,q}$
for all $s\geq \alpha\geq 5/4$, $q\in (2,\infty]$.

Enlightened by the above progresses,
we consider the following three non-uniqueness questions:
\begin{enumerate}
 \item[$\bullet $] In the highly dissipative regime $\alpha \geq 5/4$
  where the global solvability of Leray-Hopf solutions was known due to Lions \cite{lions69},
  would it be possible to find non-unique and non-Leray-Hopf weak solutions
  even with the same initial data of Leray-Hopf solutions ?
 \item[$\bullet$]
       As conjectured in the NSE context \cite{cl20.2},
	   in view of the generalized Lady\v{z}enskaja-Prodi-Serrin condition \eqref{critical-LPS-hyperNSE},
       do there exist non-unique weak solutions to the hyperdissipative NSE \eqref{equa-NS}
	   in the supercritical spaces  $L^\gamma_tW_x^{s,p}$,
	   where $
	   {2\alpha}/{\gamma} + {3}/{p} > 2\alpha -1 +s$ ?
\item[$\bullet$]
       In view of the positive well-posedness results in critical spaces,
	   e.g., \cite{K84,kt01,W06},
	   are there non-unique weak solutions
	   to hyperdissipative NSE in the
	   supercritical Lebesgue, Besov, or Triebel-Lizorkin spaces ?
\end{enumerate}

It is worth noting that,
the global solvability of Leray-Hopf solutions when $\alpha \geq 5/4$
makes it significantly hard to construct non-unique weak solutions
to \eqref{equa-NS}.
Actually, it is not possible to construct non-unique weak solutions
as in \cite{bv19b,lt20,bcv21}
in the space $C_tL^2_x$,
since any weak solution in $C_tL^2_x$
is the unique Leray-Hopf solution
due to \cite{lions69}.

In the present work,
we give the positive answers to the first and third questions,
and to the second question at two endpoints.
These results are contained in the main result, i.e., Theorem \ref{Thm-Non-hyper-NSE}
below, concerning the non-uniqueness result
for every weak solution in the space $L^\gamma_tW^{s,p}_x$,
where $(s,\gamma,p)$ lies in the supercritical regimes
$\mathcal{A}_1$ and $\mathcal{A}_2$,
respectively, for $\alpha\in [5/4,2)$ and $\alpha \in [1,2)$.
See \eqref{A-regularity1} and \eqref{A-regularity2} below for the precise
formulations of $\mathcal{A}_i$, $i=1,2$.

To the best of our knowledge,
it is the first non-uniqueness result for the hyperdissipative NSE,
when the viscosity exponent $\alpha$ is beyond the Lions exponent $5/4$.
In particular,
the non-uniqueness results
in $L^\gamma_tW^{s,p}_x$ hold in the strong sense as in \cite{cl20.2}
and are sharp at two endpoints
$(3/p+1-2\alpha, \infty, p)$
and $(2\alpha/\gamma+1-2\alpha, \gamma, \infty)$,
in view of the generalized Lady\v{z}enskaja-Prodi-Serrin condition \eqref{critical-LPS-hyperNSE}.

It also provides the non-unique weak solutions to \eqref{equa-NS}
in the spaces $C_t\mathbb{X}$,
where $\mathbb{X}$ can be the supercritical
Lebesgue, Besov and Triebel-Lizorkin spaces.
In particular, in view of the well-posedness results
in \cite{W06} and Theorem \ref{Thm-GWP-HNSE-Lp} below,
the non-uniqueness results are sharp in the
Lebesgue and Besov spaces.

Furthermore,
the delicate phenomenon exhibited here is that,
even in the hyperdissipative case $\alpha \geq 5/4$,
albeit the unique smooth Leray-Hopf solutions to \eqref{equa-NS},
there indeed exist weak solutions
in any small $L^\gamma_tW^{s,p}_x$-neighborhood of Leray-Hopf solutions,
which coincide with Leray-Hopf solutions near the initial time,
are smooth outside a null set in time,
and have the zero Hausdorff $\mathcal{H}^{\eta_*}$ measure of
the singular set,
where $\eta_*>0$ can be any given small constant.
This fine structure of the temporal singular set
is exploited by using the gluing technique,
which was first developed to solve the Onsager conjecture
\cite{B15,I18,bdsv19}
and has been recently implemented in the context of NSE \cite{bcv21,cl20.2}.

The last result of the present work is concerned with
the viscosity vanishing result.
Namely, given any weak solution to Euler equations \eqref{equa-Euler}
in the space $H^{\wt \beta}_{t,x}$,
where $\wt \beta >0$,
we show that
it is a strong vanishing viscosity limit in $H^{\wt \beta}_{t,x}$ of a sequence of weak solutions
to the hyperdissipative NSE \eqref{equa-NS} with $\alpha \in [1,2)$.
Hence, it extends the viscosity vanishing result in the NSE case
to the hyperdissipative NSE.

The construction of non-unique weak solutions,
inspired by the recent works \cite{bcv21,bv19b,cl20.2,cl21.2},
is based on the approach of intermittent convex integration,
which features both the spatial and temporal intermittency.
The fundamental spatial building blocks are the intermittent jets
for the endpoint case $(3/p+1-2\alpha, \infty, p)$,
and the concentrated Mikado flows for the other endpoint case
$(2\alpha/\gamma+1-2\alpha, \gamma, \infty)$.
In both cases,
the extra temporal intermittency  shall be exploited
in an almost optimal way,
in order to control the high dissipativity,
time derivative errors and
oscillation errors,
and simultaneously, to respect the supercritical regularity.
As we shall see below,
in the very high dissipativity regime
where $\alpha$ is close to $2$,
the suitable temporal intermittency roughly equals to 3D,
and respectively, 4D spatial intermittency
in the supercritical regimes $\mathcal{A}_1$ and $\mathcal{A}_2$.  \\

{\bf Notations.} To simplify the notations, for $p\in [1,\infty]$ and $s\in \R$, we denote
\begin{align*}
	L^p_t:=L^p(0,T),\quad L^p_x:=L^p(\T^3),\quad H^s_x:=H^s(\T^3), \quad W^{s,p}_x:=W^{s,p}(\T^3),
\end{align*}
where $W^{s,p}_x$ is the usual Sobolev space
and $H^s_x=W^{s,2}_x$.
Moreover, $L^\gamma_tL^p_x$ denotes the usual Banach space
$L^\gamma(0,T;L^p(\T^3))$,
$p, \gamma\in [1,\infty]$.
Let
\begin{align*}
	\norm{u}_{W^{N,p}_{t,x}}:=\sum_{0\leq m+|\zeta|\leq N} \norm{\p_t^m \na^{\zeta} u}_{L^p_{t,x}}, \ \
     \norm{u}_{C_{t,x}^N}:=\sum_{0\leq m+|\zeta|\leq N}
	  \norm{\p_t^m \na^{\zeta} u}_{C_{t,x}},
\end{align*}
where $\zeta=(\zeta_1,\zeta_2,\zeta_3)$ is the multi-index
and $\na^\zeta:= \partial_{x_1}^{\zeta_1} \partial_{x_2}^{\zeta_2} \partial_{x_3}^{\zeta_3}$.
In particular, we write $L^p_{t,x}:= L^p_tL^p_x$ for brevity.
Given any Banach space $X$,
$C([0,T];X)$ denotes  the space of continuous functions from $[0,T]$ to $X$,
equipped with the norm $\|u\|_{C_tX}:=\sup_{t\in [0,T]}\|u(t)\|_X$.

We also use the Besov space ${B}_{p, q}^{s} (\T^3)$
endowed with the norm
\begin{align*}
   \|f\|_{{B}_{p, q}^{s}(\T^3)}= ( \sum_{j \geq 0}\big|2^{js} \| \Delta_j f\|_{L^p(\T^3)}\big|^q )^{\frac1q},
\end{align*}
where $(s,p,q)\in (-\infty, \infty) \times [1,\infty] \times [1,\infty]$,
$\{\Delta_j\}_{j\in \mathbb{Z}}$ is the Littlewood-Paley decomposition
of the unity.
Let ${F}_{p, q}^{s}(\T^3)$ denote the Triebel-Lizorkin space,
endowed with the norm
\begin{align*}
   \|f \|_{{F}_{p, q}^{s}(\T^3)}
   =\|\big(\sum_{j\geq 0}| 2^{js} \Delta_{j}f|^q \big)^{\frac{1}{q}}\|_{L^{p}(\T^3)},
\end{align*}
where $(s,p,q)\in (-\infty, \infty) \times [1,\infty) \times [1,\infty]$.
The homogeneous Besov $\dot{B}_{p, q}^{s}(\T^3)$
and Triebel-Lizorkin spaces $\dot{F}_{p, q}^{s}(\T^3)$
are defined similarly where the summation is over $j\in \mathbb{Z}$.
We refer to \cite{ST87} for more details.

For any $A\subseteq [0,T]$, $\ve_*>0$,
the neighborhood of $A$ in $[0,T]$ is defined by
\begin{align*}
	N_{\va_*}(A):=\{t\in [0,T]:\ \exists\,s\in A,\ s.t.\ |t-s|\leq \va_*\}.
\end{align*}
We also use the notation $a\lesssim b$, which means that $a\leq C b$ for some constant $C>0$.

\subsection{Main results} \label{Subsec-Main}

Before formulating  the main results,
let us first present the notion of weak solutions
in the distributional sense to equation \eqref{equa-NS}.

\begin{definition} \label{Def-Weak-Sol} (Weak solutions)
Given any weakly divergence-free datum $u_0 \in L^2(\mathbb{T}^3)$,
we say that $u\in L^2([0,T]\times \mathbb{T}^3)$
is a weak solution for the hyperdissipative Navier-Stokes equations \eqref{equa-NS}
if $u$ is divergence-free for a.e. $t\in [0,T]$,
and
\begin{align*}
	\int\limits_{\mathbb{T}^3} u_0 \vf(0,x) dx
	= - \int_0^T \int\limits_{\mathbb{T}^3}
	  u(\partial_t \vf - \nu (-\Delta)^\alpha \vf + (u\cdot \na) \vf) dx dt
\end{align*}
for any divergence-free test function
$\vf\in C_0^\infty([0,T)\times\mathbb{T}^3)$.
\end{definition}

We focus on the following two supercritical regimes,
whose borderlines contain two endpoints
of the generalized Lady\v{z}enskaja-Prodi-Serrin condition \eqref{critical-LPS-hyperNSE}.
More precisely,
in the case $\a\in [5/4,2)$
we consider the supercritical regime $\mathcal{A}_1$ given by
		\begin{align} \label{A-regularity1}
		\mathcal{A}_1:=\bigg\{ (s,\gamma,p)\in [0,3)\times [1, \infty] \times[1,\infty]: 0\leq s< \frac{4\a-5}{\gamma}+ \frac{3}{p}+1-2\a \bigg\},
	\end{align}
and in the case $\a\in [1,2)$ we consider
supercritical regime $\mathcal{A}_2$ given by
	\begin{align} \label{A-regularity2}
		\mathcal{A}_2:=\bigg\{ (s,\gamma,p)\in [0,3)\times [1, \infty]\times[1,\infty]: 0\leq s<  \frac{2\a}{\gamma}+\frac{2\a-2}{p}+1-2\a \bigg\}.
	\end{align}

The supercritical regimes $\mathcal{A}_1$ and $\mathcal{A}_2$ in the case $s=0$
can be seen in Figure $1$ below.
\begin{figure}[H]
	\centering
	\begin{tikzpicture}[scale=4,>=stealth,align=center]
	\small
	
	
	\draw[->] (0,0) --node[pos=1,left] {$\frac{1}{p}$} node[pos=0,below]{$0$} (0,1.35);
	\draw[->] (0,0) --node[pos=1,below] {$\frac{1}{\gamma}$} (1.35,0);
	
	
	\coordinate [label=below:$1$] (gamma1) at (1,0);
	\coordinate [label=left:$1$] (p1) at (0,1);
	\coordinate (11) at (1,1);

	\coordinate [label=left:$\frac{2\alpha -1}{3} $] (A) at (0,6/7); 
	\coordinate [label=below:$\frac{2\alpha-1}{2\alpha}$] (B) at (6/7,0);
	\draw (A) --node[pos=0.5,below left]{$\;\nearrow$\\gLPS\;} (B);
	\coordinate [label=below:$ \frac{2\alpha-1}{4\alpha-5} $] (A1) at (1.2,0);
	\coordinate [label=left:$ \frac{2\alpha-1}{2\alpha-2} $] (B1) at (0,1.2);

	
	\draw[white, pattern =north west lines, pattern color = blue!50] (A) -- (p1) -- (11) -- (gamma1) -- (B) -- (0.5,0.5) -- cycle;

	\draw[dashed, name path = l3] (A) --node[pos=0.65,above right] { \; $\mathcal{A}_1$\\ $\swarrow$ \;} (A1);
	\draw[dashed, name path = l4] (B) --node[pos=0.65,above right] { \; $\mathcal{A}_2$\\ $\swarrow$ \;} (B1);
	\draw (A) -- (p1) -- (11) -- (gamma1) -- (B);
	\fill(0.5,0.5) circle (0.5pt) node[above right] {$(\frac{1}{2}, \frac{1}{2})$} ;
	
	\coordinate [label=left:{Endpoint space \\ $C_t L_x^{\frac{3}{2\alpha-1}} \rightarrow $}  ] (M1) at (-0.2,13/14);
	\coordinate [label=below:{$ \uparrow $ \\ Endpoint space  $L_t^{\frac{2\alpha}{2\alpha-1}}L^\infty_x $}  ] (M1) at (6/7,-0.1);
		
\end{tikzpicture}
\caption{The case $\alpha \in [\frac{5}{4},2), s=0$}
\label{fig:1}
\end{figure}
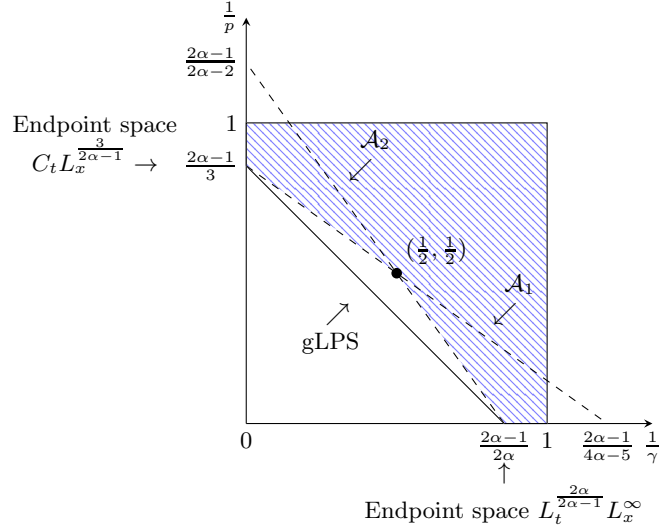

The main result of this paper is formulated in Theorem \ref{Thm-Non-hyper-NSE} below,
which in particular gives the non-uniqueness in the
shaded part in Figure $1$,
including any small neighborhood near two endpoints.

\begin{theorem} \label{Thm-Non-hyper-NSE}
Let $\tilde{u}$ be any smooth, divergence-free and mean-free vector field on $[0,T]\times \T^3$.
Then, there exists $\beta'\in(0,1)$,
such that for any $\va_*, \eta_*>0$
and for any $(s,p,\gamma)\in \mathcal{A}_1$ or $(s,p,\gamma)\in \mathcal{A}_2$,
respectively, if $\alpha \in [5/4,2)$ or $\alpha\in[1,2)$,
there exist a velocity field $u$
and a set
\begin{align*}
	\mathcal{G} = \bigcup\limits_{i=1}^\infty (a_i,b_i) \in [0,T],
\end{align*}
such that the following hold:
\begin{enumerate}[(i)]
	\item Weak solution: $u$ is a weak solution
		to \eqref{equa-NS} with the initial datum $\wt u(0)$
		and has zero spatial mean.
	\item Regularity: $u \in H^{\beta'}_{t,x} \cap L^\gamma_tW^{s,p}_x$,
	and
	\begin{align*}
		u|_{\mathcal{G}\times \mathbb{T}^3} \in C^\infty (\mathcal{G}\times \mathbb{T}^3).
	\end{align*}
	Moreover,
	if there exists $t_0\in (0,T)$ such that
	$\wt u$ is the solution to \eqref{equa-NS} on $[0,t_0]$,
	then $u$ agrees with $\wt u$ near $t=0$.
	\item The Hausdorff dimension of the singular set
	$\mathcal{B} = [0,T]/\mathcal{G}$ satisfies
	\begin{align*}
		   d_{\mathcal{H}}(\mathcal{B}) <\eta_*.
	\end{align*}
	In particular, the singular set $\mathcal{B}$
	has zero Hausdorff $\mathcal{H}^{\eta_*}$ measure,
	i.e.,  $\mathcal{H}^{\eta_*}(\mathcal{B})=0$.
	\item Small deviations of temporal support:
	 $$\supp_t u  \subseteq N_{\va_*}(\supp_t \tilde{u}).$$
	\item Small deviations on average:
	$$\|u-\tilde{u}\|_{L^1_tL^2_x}+\|u-\tilde{u}\|_{L^\gamma_tW^{s,p}_x}\leq \va_*.$$
\end{enumerate}
\end{theorem}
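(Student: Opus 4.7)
The plan is to build $u$ via an iterative convex integration scheme for the Navier--Stokes--Reynolds system
\begin{equation*}
\p_t \u + \nu (-\Delta)^\a \u + \div (\u \otimes \u) + \na P_q = \div \rr_q, \qquad \div \u = 0,
\end{equation*}
producing a sequence $(\u, \rr_q)_{q \geq 0}$ whose inductive estimates are indexed by frequency $\la = a^{(b^q)}$ and amplitude $\dq \to 0$. One initializes with $u_0$ essentially equal to $\tilde u$ (modified to achieve the correct initial datum and temporal support), and in the limit $u = \lim_q \u$ yields a weak solution of \eqref{equa-NS}.

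Each step is preceded by a \emph{gluing stage}: outside a union of short intervals $\bigcup_i I_i$ around a regular time grid, $\u$ is replaced by a concatenation of exact strong solutions of \eqref{equa-NS} started from $\u$ on each grid interval. Since $\a \geq 1$, the hyperdissipative regularity theory provides short-time smooth solutions at the scale dictated by $\la, \dq$, and smoothly patching them produces a modified stress $\wt{\rr}_q$ supported only in $\bigcup_i I_i$ while preserving the inductive size bounds. The lengths and arrangements of these intervals are chosen so that the nested intersection $\mathcal{G}$ has a complement of Hausdorff dimension $< \es$, yielding (iii). Choosing the gluing cutoffs identically one on a neighbourhood of $[0,t_0]$ preserves $u=\tilde u$ near $t=0$, giving the last part of (ii).

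The perturbation takes the form
\begin{equation*}
 w_{q+1}^{(p)}(t,x) = \sum_\xi a_{\xi}(t,x)\, W_\xi(x)\, \g(t),
\end{equation*}
where $a_\xi$ are divergence-corrected coefficients whose quadratic averages cancel $\wt{\rr}_q$ via the standard geometric lemma, $W_\xi$ is a spatial intermittent building block, and $\g = \s^{1/2} g(\s \,\cdot\,)$ is a temporally intermittent rescaling of a mean-one periodic profile. In the regime $\mathcal{A}_1$ (endpoint $(3/p+1-2\a,\infty,p)$) we take $W_\xi$ to be an intermittent jet concentrated in a $(\rs,\rp)$-tube, whereas in $\mathcal{A}_2$ (endpoint $(2\a/\gamma+1-2\a,\gamma,\infty)$) we take highly concentrated Mikado flows enjoying full $3D$ spatial intermittency; the temporal concentration $\s$ is then pushed nearly to the limit allowed by the remaining errors, so that the combined space-time intermittency effectively matches $3D$ (resp.\ $4D$) spatial intermittency when $\a$ is close to $2$. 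Divergence and temporal correctors $w_{q+1}^{(c)}, w_{q+1}^{(t)}$ are added in the standard way to keep $u_{q+1}$ divergence-free and to control the time-derivative error.

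The analytic heart of the iteration is the decomposition
\begin{equation*}
 \div \rr_{q+1} \;\sim\; \textup{linear} + \textup{oscillation} + \textup{Reynolds} + \textup{hyperdissipation} + \textup{time-derivative},
\end{equation*}
each term required to be $\lesssim \delta_{q+2}$ in $L^1_{t,x}$. The hyperdissipation contribution from $\nu(-\Delta)^\a w_{q+1}$, which is the genuinely new ingredient compared with \cite{bv19b,cl20.2,bcv21}, carries a factor $\laq^{2\a}$ that must be absorbed by the intermittent $L^1$-gain; this is the step that forces $\rs,\rp,\s$ into a narrow window, and it is precisely this window that closes as $\a \to 2$. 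The geometric series $\sum_q \|w_{q+1}\|_{L^\gamma_t W^{s,p}_x}$ converges \emph{iff} $(s,\gamma,p)\in \mathcal{A}_i$, encoding the sharpness at the two endpoints; a parallel interpolation between $L^2_{t,x}$-smallness and polynomial $C^1_{t,x}$-growth of each $w_{q+1}$ produces some $\beta'>0$ with $u\in H^{\beta'}_{t,x}$, yielding (ii). Items (iv)--(v) follow because each $w_{q+1}$ is supported in a neighbourhood of $\supp_t \tilde u$ and is $\va_*$-small in the prescribed norms by construction. The main obstacle is the simultaneous balance of (a) absorbing the $\laq^{2\a}$ hyperdissipative error, (b) closing the supercritical $L^\gamma_t W^{s,p}_x$ estimate, and (c) leaving enough room in the gluing to produce a singular set of Hausdorff dimension $<\es$; this balance is tightest exactly at the two Lady\v{z}enskaja--Prodi--Serrin endpoints, which is why they are sharp and dictate the choice of spatial building block in each regime.
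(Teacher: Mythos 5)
Your high-level architecture matches the paper: gluing local solutions to concentrate the Reynolds stress onto a set of small Hausdorff dimension, followed by a space-time intermittent convex integration step with intermittent jets for $\mathcal{A}_1$ and concentrated Mikado flows for $\mathcal{A}_2$, and then passing to the limit in $H^{\beta'}_{t,x}\cap L^\gamma_t W^{s,p}_x$ by interpolation. Items (iii)--(v) are handled exactly as you sketch.

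However, there is a genuine factual error that misrepresents the key mechanism distinguishing the two endpoint cases. You assert that the Mikado flows used in $\mathcal{A}_2$ ``enjoy full $3D$ spatial intermittency.'' This is wrong and essentially backwards: the concentrated Mikado flows are constant along one direction (the pipe axis) and so provide at most \emph{2D} spatial intermittency, i.e.\ $\|W_{(k)}\|_{L^1_x}\lesssim r_\perp$, whereas it is the \emph{intermittent jets} in $\mathcal{A}_1$ (with the additional $\psi_{(k_1)}$ concentration in the longitudinal direction driven by the time shift $\mu t$) that achieve almost $3D$ spatial intermittency $\|W_{(k)}\|_{L^\infty_t L^1_x}\lesssim \lambda^{-3/2+}$. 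The reason one must switch to Mikado flows at the other endpoint is precisely that the $\mu t$ time shift in the jets produces incompatible constraints on the temporal parameters (constraints \eqref{setdeltat} and \eqref{setrosc2}), which empties the admissible parameter window for $\mathcal{A}_2$. Dropping the shift (hence dropping $\mu$ and $r_\parallel$) sacrifices a dimension of spatial intermittency but frees the temporal concentration to be pushed to $\tau\sim\lambda^{2\alpha}$, which provides the larger (up to $4D$-equivalent) temporal intermittency needed to control the high viscosity. Your proposal gets the final tally (``$3D$ resp.\ $4D$'') right but attributes it to the wrong source, and as written it would not explain why the same building block cannot be used for both regimes.

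A secondary but related slip: you describe $g_{(\tau)}$ as ``$\sigma^{1/2}g(\sigma\cdot)$.'' In the paper's scheme the temporal \emph{concentration} parameter is $\tau$ (giving the $\tau^{1/2}$ prefactor and $L^\gamma_t$ gain $\tau^{1/2-1/\gamma}$), while $\sigma$ is a comparatively small oscillation parameter with $\sigma=\lambda^{2\varepsilon}\ll\tau$; conflating them would break the bookkeeping in the oscillation error $\mathring{R}_{osc}$, where $\sigma^{-1}$ and $\tau\sigma\mu^{-1}$ play distinct roles. Finally, note that in the $\mathcal{A}_2$ case the corrector $w^{(t)}_{q+1}$ is absent because Mikado flows satisfy $\div(W_{(k)}\otimes W_{(k)})=0$; only $w^{(o)}_{q+1}$ is needed to handle the temporal oscillations, a structural simplification that your proposal does not reflect.
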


The first direct consequence of Theorem \ref{Thm-Non-hyper-NSE}
is the following strong non-uniqueness of weak solutions to \eqref{equa-NS}
in the hyperdissipative case where $\alpha \in [5/4,2)$.

\begin{corollary} \label{Cor-Strong-Nonuniq}
(Strong non-uniqueness)
Let $\alpha\in [5/4,2)$.
Then, for any weak solution $\wt u$ to \eqref{equa-NS},
there exists a different weak solution $u \in L^\gamma_tW^{s,p}_x$ to \eqref{equa-NS}
with the same initial data,
where $(s,\gamma,p) \in \mathcal{A}_1\cup \mathcal{A}_2$.

Moreover, for every divergence-free $L^2_x$ initial data,
there exist infinitely many weak solutions in $L^\gamma_{t}W^{s,p}_x$
to \eqref{equa-NS}
which are smooth almost everywhere in time.
\end{corollary}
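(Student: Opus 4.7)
The plan is to deduce Corollary \ref{Cor-Strong-Nonuniq} directly from Theorem \ref{Thm-Non-hyper-NSE} combined with the Lions global well-posedness result \cite{lions69}: for $\alpha\geq 5/4$ and any divergence-free $u_0\in L^2_x$, there exists a unique Leray-Hopf solution $v$, which is smooth on $(0,T]\times\T^3$.

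For the first statement, let $\wt u$ be any weak solution with initial datum $u_0=\wt u(0)$ and let $v$ be the corresponding Lions solution. If $\wt u\neq v$, take $v$ itself as the desired distinct weak solution; by smoothness it lies in $L^\gamma_tW^{s,p}_x$ for every admissible triple. Otherwise $\wt u=v$, and we fix a small $\delta>0$, observing that $v|_{[\delta,T]}$ is smooth on the closed interval $[\delta,T]\times\T^3$ (after a Galilean shift to ensure mean-freeness if needed). Applying Theorem \ref{Thm-Non-hyper-NSE} on the time interval $[\delta,T]$ with input $v|_{[\delta,T]}$ yields a weak solution $u_\delta$ on $[\delta,T]$ with $u_\delta(\delta)=v(\delta)$, and by property (ii) coinciding with $v$ on a right-neighborhood of $\delta$. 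Gluing $v|_{[0,\delta]}$ with $u_\delta$ at $t=\delta$ produces a weak solution $u$ on $[0,T]$ with initial datum $u_0$, and $u\neq v$ because the convex integration construction delivers a non-Leray-Hopf $u_\delta$ with a genuine temporal singular set, while $v$ is the unique Leray-Hopf solution.

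For the infinitely-many assertion, fix $u_0$ and perform the construction above with $\va_*=1/n$ (and a fixed small $\eta_*\in(0,1)$) to obtain weak solutions $u^{(n)}$ satisfying
\begin{align*}
\|u^{(n)}-v\|_{L^1_tL^2_x}+\|u^{(n)}-v\|_{L^\gamma_tW^{s,p}_x}\leq \frac{1}{n},
\end{align*}
with each $u^{(n)}\neq v$. Since $u^{(n)}\to v$ in $L^1_tL^2_x$ while every term differs from $v$, the family $\{u^{(n)}\}_{n\geq 1}$ must contain infinitely many pairwise distinct elements; property (iii) of Theorem \ref{Thm-Non-hyper-NSE} with $\eta_*<1$ further guarantees each $u^{(n)}$ is smooth outside a Lebesgue-null set in time.

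The main obstacle lies in certifying that the output of Theorem \ref{Thm-Non-hyper-NSE} is genuinely distinct from its smooth input. This reduces to ensuring the temporal singular set produced by the convex integration iteration is nonempty, which is inherent to the scheme: the iterative perturbations remain nonzero by design, so the limiting $u_\delta$ fails to be globally smooth and therefore cannot coincide with $v$ by Lions' uniqueness of Leray-Hopf solutions in $C_tL^2_x$. A secondary technical point, handled by the $[\delta,T]$-restriction and gluing above, is that the Lions solution is smooth only strictly away from $t=0$ when the initial datum is merely $L^2_x$, so the smooth ansatz required by Theorem \ref{Thm-Non-hyper-NSE} is not directly available on the full interval $[0,T]$.
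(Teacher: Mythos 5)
Your argument has a genuine gap at the critical step. You acknowledge the issue yourself — certifying that the output of Theorem~\ref{Thm-Non-hyper-NSE} is distinct from its smooth input — but the way you try to close it is not valid. Theorem~\ref{Thm-Non-hyper-NSE}, taken as a black box, only guarantees $\|u-\tilde u\|_{L^1_tL^2_x}+\|u-\tilde u\|_{L^\gamma_tW^{s,p}_x}\leq\va_*$, smoothness of $u$ off a small Hausdorff-dimension set, and agreement with $\tilde u$ near $t=0$ when $\tilde u$ is itself a solution near $t=0$. None of properties (i)--(v) assert $u\neq\tilde u$, that the singular set $\mathcal{B}$ is nonempty, or that $u$ fails to be Leray--Hopf. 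Your statement that ``the iterative perturbations remain nonzero by design'' is a claim about the internals of the convex integration scheme that you would have to re-open and prove; it is not available to you if you are only using the theorem as stated. In particular, your inference ``$u_\delta$ has a genuine temporal singular set, hence cannot equal the smooth Lions solution'' does not follow.

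The paper avoids this obstacle entirely by a different device: rather than trying to force the output to differ from its input, it feeds Theorem~\ref{Thm-Non-hyper-NSE} an input $\wt u_m$ that is deliberately far from $\wt u$ on the subinterval $[T/2,T]$ (namely $\wt u_m=\wt u+(1-\chi)\frac{m}{c_0}v$ with $\|\frac{m}{c_0}v\|_{L^1([T/2,T];L^2_x)}=m$), while still matching $\wt u$ on $[0,T/4]$ so that the initial datum and the coincidence near $t=0$ are preserved. Then property (v) with $\va_*<1/4$ gives $\|u_m-\wt u_m\|_{L^1_tL^2_x}\leq\va_*$, and the reverse triangle inequality on $[T/2,T]$ yields $\|u_m-\wt u\|_{L^1([T/2,T];L^2_x)}\geq m-\va_*>1/2$, so $u_m\neq\wt u$. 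The same estimate with two indices $m\neq m'$ gives pairwise distinctness and hence infinitely many solutions, without ever needing a lower bound on the perturbation coming out of the iteration. Your $\delta$-shift and gluing are a reasonable response to the secondary issue of smoothness of the Leray--Hopf solution at $t=0$, but they do not repair the central gap; if you wish to keep your structure, you would still need to insert a large, compactly supported-in-$(0,T)$ modification of the ansatz (as the paper does) before invoking the theorem, so that the triangle inequality does the work of showing $u\neq v$.
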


Another interesting consequence of Theorem \ref{Thm-Non-hyper-NSE}
is the non-uniqueness in the supercritical Lebesgue,
Besov and Triebel-Lizorkin spaces.

\begin{corollary} \label{Cor-Nonuniq-Supercri}
	(Non-uniqueness in supercritical spaces)
 Let $\alpha \in [5/4,2)$.
 Then, there exist non-unique weak solutions to \eqref{equa-NS} in the supercritical spaces $C_t\mathbb{X}$,
 where $\mathbb{X}$ can be one of the following three types of spaces:
\begin{enumerate}
	\item[(i)]  $L^p$, $1\leq p<\frac{3}{2\alpha-1}$;
	\item[(ii)] ${B}^{s}_{p,q}$, $-\infty<s<\frac 3p +1-2\alpha$,
                $1< p<\infty$, $1\leq q\leq \infty$;
    \item[(iii)] ${F}^{s}_{p,q}$, $-\infty<s<\frac 3p +1-2\alpha$,
	            $1< p<\infty$, $1\leq q\leq \infty$.
\end{enumerate}
\end{corollary}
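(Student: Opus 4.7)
The plan is to deduce each of (i)--(iii) from Theorem \ref{Thm-Non-hyper-NSE} at the endpoint $\gamma = \infty$ of the supercritical regime $\mathcal{A}_1$, together with a classical upgrade from $L^\infty_t$-boundedness in space to genuine strong continuity in time, derived from the evolution equation. The central point is that the defining condition for $\mathbb{X}$ in each case is a strict inequality, which leaves room to choose $(s_0, \infty, p_0) \in \mathcal{A}_1$ with a small regularity surplus so that $W^{s_0, p_0}_x$ embeds continuously into $\mathbb{X}$.

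The first step is the parameter choice. For case (i) with $p < 3/(2\alpha-1)$, take $s_0 = 0$ and $p_0 \in (p, 3/(2\alpha-1))$, so that $(0, \infty, p_0) \in \mathcal{A}_1$ and $L^{p_0}_x \hookrightarrow L^p_x$ trivially on $\T^3$. For case (ii) with $s < 3/p + 1 - 2\alpha$, fix $p_0 = p$ and select $s_0 \in (s, 3/p + 1 - 2\alpha)$; using $W^{s_0, p}_x = F^{s_0}_{p, 2} \hookrightarrow B^{s_0}_{p, \infty} \hookrightarrow B^{s}_{p, q}$, where the last embedding exploits the strict gain $s_0 > s$, we land in $\mathbb{X}$. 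For case (iii) the same choice works by the elementary Littlewood-Paley estimate
\begin{equation*}
\|f\|_{F^s_{p,q}} \le C_q \, \|f\|_{F^{s_0}_{p, \infty}} \le C_q \, \|f\|_{W^{s_0, p}_x},
\end{equation*}
valid for $s_0 > s$ and any $q\in[1,\infty]$. Applying Theorem \ref{Thm-Non-hyper-NSE} with this $(s_0, \infty, p_0)$, taking $\tilde u$ to be the unique smooth Leray-Hopf solution of \eqref{equa-NS} from \cite{lions69} (available since $\alpha \geq 5/4$), we obtain a weak solution $u \in L^\infty_t W^{s_0, p_0}_x \cap H^{\beta'}_{t,x}$ distinct from $\tilde u$.

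Next I would upgrade $u \in L^\infty_t W^{s_0, p_0}_x$ to $u \in C_t \mathbb{X}$. From the equation \eqref{equa-NS}, together with the elementary bounds $u \otimes u \in L^\infty_t L^{p_0/2}_x$ and $(-\Delta)^\alpha u \in L^\infty_t W^{s_0 - 2\alpha, p_0}_x$, the Leray projection gives $\partial_t u \in L^\infty_t W^{-M, q}_x$ for some $M, q$ depending on $s_0, p_0, \alpha$, so that $u$ is Lipschitz in time valued in the weak space $W^{-M, q}_x$. Interpolation between this bound and $u \in L^\infty_t W^{s_0, p_0}_x$ then yields $u \in C_t W^{s_0 - \delta, p_0}_x$ for arbitrarily small $\delta > 0$, and the spare regularity reserved in the first step guarantees $W^{s_0 - \delta, p_0}_x \hookrightarrow \mathbb{X}$. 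Thus $u \in C_t \mathbb{X}$, and paired with the smooth Leray-Hopf solution $\tilde u \in C_t \mathbb{X}$, non-uniqueness in $C_t \mathbb{X}$ follows.

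The main obstacle I anticipate is the bookkeeping in the interpolation step: managing the correct indices uniformly across Sobolev, Besov, and Triebel-Lizorkin scales while keeping strict containment in $\mathbb{X}$ requires care, and one must verify that the intermediate scale is compatible with the Leray projection and fractional Laplacian estimates used to control $\partial_t u$. A cleaner alternative, should the interpolation become delicate, is to revisit the convex integration construction underlying Theorem \ref{Thm-Non-hyper-NSE} and observe that the telescoping sum $\sum_q \|u_{q+1} - u_q\|_{C_t W^{s_0, p_0}_x}$ is summable, giving $u \in C_t W^{s_0, p_0}_x$ directly as a uniform limit of smooth iterates, in which case the first step alone suffices.
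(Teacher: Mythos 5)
Your argument for case (i) is correct and matches the paper's: with $s_0 = 0$, $\gamma = \infty$, $p_0 \in (p, 3/(2\alpha-1))$, the triple $(0, \infty, p_0) \in \mathcal{A}_1$ and $L^{p_0}(\T^3) \hookrightarrow L^p(\T^3)$. Your ``cleaner alternative'' for time-continuity is also correct and is exactly what the paper uses implicitly: each $u_q$ is smooth, and the summability of $\|u_{q+1}-u_q\|_{L^\infty_t W^{s_0,p_0}_x}$ from \eqref{u-B-Lw-conv} gives uniform convergence in $t$, hence $u \in C_t W^{s_0,p_0}_x$. The interpolation detour through $\partial_t u$ is unnecessary overhead.

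For cases (ii) and (iii) there is a genuine gap. You fix $p_0 = p$ and choose $s_0 \in (s, 3/p+1-2\alpha)$, then invoke Theorem \ref{Thm-Non-hyper-NSE} with $(s_0, \infty, p)$. But membership in $\mathcal{A}_1$ at $\gamma = \infty$ requires $0 \le s_0 < 3/p + 1 - 2\alpha$, and this interval is \emph{empty} whenever $3/p + 1 - 2\alpha \le 0$, i.e.\ whenever $p \ge 3/(2\alpha-1)$. Since $3/(2\alpha-1) \le 2$ for $\alpha \ge 5/4$, a large part of the stated range $1<p<\infty$ (in particular all $p \ge 2$) is not covered by your choice. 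In that regime the corollary asserts non-uniqueness in Besov and Triebel--Lizorkin spaces of strictly \emph{negative} smoothness, and you cannot feed a negative $s_0$ into $\mathcal{A}_1$, which requires $s \ge 0$.

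The paper avoids this by deducing (ii) and (iii) from (i) rather than re-applying the theorem at $p_0 = p$. Setting $s_p := 3/p + 1 - 2\alpha$, pick $\eta > 0$ with $s < s_p - \eta$ and $\tilde p := 3/(2\alpha-1+\eta) > 1$; then $\tilde p < 3/(2\alpha-1)$, so (i) supplies a non-unique solution in $C_t L^{\tilde p}_x$. The critical Sobolev-type embedding $L^{\tilde p} = F^0_{\tilde p,2} \hookrightarrow F^{s_p-\eta}_{p,q}$ (the differential dimensions $-3/\tilde p$ and $s_p - \eta - 3/p$ both equal $-(2\alpha-1+\eta)$), followed by $F^{s_p-\eta}_{p,q} \hookrightarrow B^s_{p,q}$ or $F^s_{p,q}$, then lands in $\mathbb{X}$. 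The decisive difference is that the paper trades spatial integrability for regularity in the embedding step, which is precisely what is needed to reach negative-smoothness targets, whereas your construction holds $p$ fixed and is therefore confined to $s_0 \ge 0$.
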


At last, we have the vanishing viscosity result
which extends the corresponding result for NSE in \cite{bv19b} to the hyperdissipative NSE \eqref{equa-NS}.

\begin{theorem} \label{Thm-hyperNSE-Euler-limit}
	(Strong vanishing viscosity limit)
	Let $\alpha \in (1,2)$
	and $u\in H^{\wt{\beta}}_{t,x}([-2T,2T]\times \T^3)$ be any mean-free weak solution
	to the Euler equation \eqref{equa-Euler},
	where $\wt \beta >0$.
	Then, there exist $\beta' \in (0, \wt \beta)$ and a sequence of weak solutions
	$u^{(\nu_{n})}\in H^{\beta'}_{t,x} $
	to \eqref{equa-NS},
    where  $\nu_{n}$
    is the viscosity coefficient,
	such that
	as $\nu_{n}\rightarrow 0$,
	\begin{align}\label{convergence}
		u^{(\nu_{n})}\rightarrow u \quad\text{strongly in}\  H^{\beta'}_{t,x}.
	\end{align}
\end{theorem}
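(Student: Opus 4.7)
The plan is to adapt the intermittent convex integration scheme developed in the proof of Theorem \ref{Thm-Non-hyper-NSE} by choosing the Euler solution itself (after regularization) as the starting approximate NSE solution, and then to quantify how the resulting perturbation vanishes as the viscosity tends to zero. First I would mollify: pick sequences $\ell_n, \nu_n \to 0$ and, after extending $u$ by zero outside $[-2T,2T]$ if needed, set $u_{\ell_n} := u *_{t,x} \rho_{\ell_n}$ where $\rho_{\ell_n}$ is a standard space-time mollifier. Then $u_{\ell_n}$ is smooth, divergence-free, and mean-free, with $u_{\ell_n}\to u$ in $H^{\wt\beta}_{t,x}$.

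Next I would check that $u_{\ell_n}$ is an approximate hyperdissipative NSE solution: it satisfies
\begin{equation*}
\partial_t u_{\ell_n} + \nu_n(-\Delta)^\alpha u_{\ell_n} + \div(u_{\ell_n}\otimes u_{\ell_n}) + \nabla P_{\ell_n} = \div \mathring{R}_{n},
\end{equation*}
where $\mathring{R}_{n} = (u_{\ell_n}\otimes u_{\ell_n}) - (u\otimes u)*\rho_{\ell_n} + \nu_n \mathcal{R}(-\Delta)^\alpha u_{\ell_n}$, with $\mathcal{R}$ the inverse-divergence operator. A standard commutator estimate of Constantin--E--Titi type shows the first (Euler) part tends to zero in $L^1_{t,x}$ as $\ell_n\to 0$, at a quantitative rate controlled by $\ell_n^{2\wt\beta}\|u\|_{H^{\wt\beta}}^2$. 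For the viscous part, since $\alpha<2$, one has $\|\nu_n \mathcal{R}(-\Delta)^\alpha u_{\ell_n}\|_{L^1_{t,x}} \lesssim \nu_n \ell_n^{-(2\alpha-1-\wt\beta)}\|u\|_{H^{\wt\beta}}$. Hence a coordinated choice $\nu_n\ll \ell_n^{2\alpha-1-\wt\beta}$ makes $\|\mathring{R}_n\|_{L^1_{t,x}}$ vanish.

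I would then feed the pair $(u_{\ell_n},\mathring{R}_n)$ into the iterative convex integration scheme from the proof of Theorem \ref{Thm-Non-hyper-NSE}: this scheme, by construction, produces a weak solution $u^{(\nu_n)}$ to \eqref{equa-NS} with viscosity $\nu_n$ satisfying $u^{(\nu_n)}\in H^{\beta'}_{t,x}$ and, crucially, the perturbation $u^{(\nu_n)}-u_{\ell_n}$ admits a quantitative upper bound of the form $\|u^{(\nu_n)}-u_{\ell_n}\|_{H^{\beta'}_{t,x}}\lesssim \|\mathring{R}_n\|_{L^1_{t,x}}^{\theta}$ for some $\theta>0$ (coming from the geometric decrease of the stress along the scheme and interpolation against the uniform $C^0$ bound). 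Combining this with $u_{\ell_n}\to u$ in $H^{\wt\beta}_{t,x}\hookrightarrow H^{\beta'}_{t,x}$ for $\beta'<\wt\beta$ yields the desired strong convergence \eqref{convergence}.

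The main obstacle will be ensuring consistency of the derivative loss: the hyperdissipative stress $\nu_n\mathcal{R}(-\Delta)^\alpha u_{\ell_n}$ costs $2\alpha-1$ derivatives, and this must be absorbed by the mollification while simultaneously keeping $\ell_n$ large enough that the Euler commutator stress remains small and the convex integration iteration remains quantitatively controlled. This forces a careful two-parameter scaling $\nu_n \ll \ell_n^{2\alpha-1-\wt\beta}$ with $\ell_n\to 0$, and requires $\wt\beta>0$ strictly (so that some derivatives are available to pay). A secondary, more technical, point is that the regularity exponent $\beta'$ produced by the scheme must be chosen strictly less than $\wt\beta$ and independent of $\nu_n$; this is a matter of tracking the inductive estimates from Theorem \ref{Thm-Non-hyper-NSE} in the limit of small base stress rather than of a genuinely new difficulty.
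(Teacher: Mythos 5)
Your overall strategy matches the paper's: mollify the Euler solution, express the mollification error together with the dissipative term $\nu_n\mathcal R(-\Delta)^\alpha u_n$ as an initial Reynolds stress, verify the inductive estimates, run the main iteration (Theorem \ref{Prop-Iterat}) to produce $u^{(\nu_n)}$, and then split $u^{(\nu_n)}-u$ into the mollification error plus the convex-integration perturbation. The paper's specific choices are the coupled scaling $\nu_n=\lambda_n^{-2\alpha}$ (viscosity tied to the frequency parameter rather than a separate $\ell_n$) and separate space/time mollifiers at the same scale.

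There is, however, a gap in how you close the argument. You invoke a black-box bound of the form $\|u^{(\nu_n)}-u_{\ell_n}\|_{H^{\beta'}_{t,x}}\lesssim \|\mathring R_n\|_{L^1_{t,x}}^{\theta}$, but the iteration does \emph{not} directly output such a bound in terms of the size of the starting stress. The actual mechanism in the paper is to verify that $(u_n,\mathring R_n)$ satisfies the full set of inductive estimates \eqref{uh3}--\eqref{rl1} \emph{at level $q=n$} (not at $q=0$) --- note you also need the $L^\infty_t H^3_x$, $L^\infty_t H^4_x$, $\partial_t H^2_x$ bounds, not just $\|\mathring R_n\|_{L^1}$ --- so that the perturbation built by the scheme is the tail $\sum_{q\ge n}(u_{q+1}-u_q)$ of a convergent series, and
\begin{align*}
 \|u^{(\nu_n)}-u_n\|_{H^{\beta'}_{t,x}}\lesssim \sum_{q\ge n}\lambda_{q+1}^{-\beta(1-\beta')+8\beta'}\xrightarrow[n\to\infty]{}0.
\end{align*}
That is what makes the perturbation small; it is the position in the iteration hierarchy, not a H\"older-type continuity in $\|\mathring R_n\|_{L^1}$, that gives the convergence. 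Once this point is made precise, the remaining pieces you outline (commutator estimate for the Euler stress, interpolation to bound the viscous stress, and triangle inequality against $\|u-u_n\|_{H^{\beta'}}\lesssim\lambda_n^{-(\wt\beta-\beta')}$) are as in the paper.
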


\subsection{Comments on main results.}

In the following let us present some comments on the main results.

{\bf (i) Strong non-uniqueness for the high dissipativity above the Lions exponent.}
It is folklore that one has global solvability in the high dissipative case
when $\alpha \geq 5/4$.
Actually, in view of the works  \cite{lions69,lt20,bcv21},
$\alpha =5/4$ is the critical threshold for the well-posedness of
solutions in $C_tL^2_x$ to \eqref{equa-NS}.
That is, weak solutions in $C_tL^2_x$
are unique if $\alpha \geq 5/4$, while non-unique if $\alpha <5/4$.

Quite surprisingly,
Theorem \ref{Thm-Non-hyper-NSE} shows that,
even in the high dissipative regime $\alpha \geq 5/4$,
the uniqueness fails in the spaces $L^\gamma_tW^{s,p}_x$
where $(s,\gamma,p)$ lies in the supercritical regimes
$\mathcal{A}_1 \cup \mathcal{A}_2$, defined in \eqref{A-regularity1} and \eqref{A-regularity2}, respectively.
The non-uniqueness even exhibits in the strong sense that,
any solution in $L^\gamma_t W^{s,p}_x$
is non-unique.

In particular, in the case where $\alpha =5/4$,
Corollary \ref{Cor-Nonuniq-Supercri} $(i)$ yields
the non-uniqueness of weak solutions in $C_tL^p_x$ to \eqref{equa-NS}
for any $p<2$,.
Thus, in view of the well-posedness results \cite{lions69},
the non-uniqueness of Corollary \ref{Cor-Nonuniq-Supercri} $(i)$ is sharp in $C_tL^2_x$.

{\bf (ii) Sharp non-uniqueness at two endpoints of generalized Lady\v{z}enskaja-Prodi-Serrin condition.}
In the remarkable paper \cite{cl20.2},
Cheskidov-Luo first proved the sharp non-uniqueness at the endpoint
$(s,\gamma,p)=(0,2,\infty)$.
That is,
for any $\gamma<2$, there exist non-unique solutions
in $L^\gamma_tL^\infty_x$ to NSE ($\alpha=1$)
in all dimensions $d\geq 2$.
It is also conjectured that the non-uniqueness shall be valid
in the whole supercritical regime determined by the
Lady\v{z}enskaja-Prodi-Serrin condition \eqref{critical-LPS-NSE}.
The non-uniqueness for the other endpoint case $(s,\gamma,p)=(0,\infty,2)$
has been recently achieved in \cite{cl21.2} for the 2D NSE.

In view of the generalized Lady\v{z}enskaja-Prodi-Serrin condition \eqref{critical-LPS-hyperNSE}
and the well-posedness in the (sub) critical cases \cite{Z07},
Theorem \ref{Thm-Non-hyper-NSE} provides the sharp non-uniqueness
for the hyperdissipative NSE \eqref{equa-NS}
at two endpoints, i.e.,
$(3/p+1-2\alpha,\infty,p)$ for $\alpha \in [5/4,2)$,
and $(2\alpha/\gamma+1-2\alpha,\gamma,\infty)$ for $\alpha \in (1,2)$.
This in particular extends the results in
\cite{cl21.2} and \cite{cl20.2},
respectively, to the 3D hyperdissipative NSE where
$\alpha\in [5/4,2)$ and $\alpha \in (1,2)$.

We would expect the non-uniqueness
for the remaining supercritical regimes when $\alpha\in[5/4,2)$,
and for the supercritical regime near the
endpoint $(3/p+1-2\alpha, \infty, p)$ when $\alpha \in [1,5/4)$.
This seems out of the reach of present method,
due to the $L^2_{t,x}$-criticality of space-time convex integration method.
As a matter of fact, as pointed out in \cite{cl20.2,cl21.2},
the temporal intermittency allows to
raise the temporal integrability exponent $\gamma >2$,
yet at the cost of reducing the spatial integrability exponent $p<2$.
We note that, in the endpoint case $(0,\infty, {3}/{(2\alpha-1)})$
when $\alpha <5/4$,
both the temporal and spatial integrability exponents would be larger than two.

{\bf (iii) Sharp non-uniqueness in the supercritical Lebesgue and Besov spaces.}
There have been extensive results on the well-posedness
in the (sub)critical spaces,
see, e.g. \cite{C04,LR16,M99} for the NSE in the critical spaces:
\begin{align} \label{critical-space-NSE}
	L^3
	\hookrightarrow \dot{B}^{\frac 3p-1}_{p|2\leq p<\infty,\infty}
	\hookrightarrow BMO^{-1} (=\dot{F}^{-1}_{\infty,2}).
\end{align}

Corollary \ref{Cor-Nonuniq-Supercri} appears to be the first non-uniqueness
result for hyperdissipative NSE \eqref{equa-NS}
in the space $C_t\mathbb{X}$,
where $\mathbb{X}$ can be the supercritical
Lebesgue, Besov and Triebel-Lizorkin spaces.

Let us mention that,
the well-posedness of NSE in the critical space $L^3_x$
was proved in the famous paper by Kato \cite{K84}.
The mild formulation strategy proposed in \cite{K84}
has been now frequently used to obtain the well-posedness
of NSE in various spaces.
For the hyperdissipative NSE \eqref{equa-NS},
we include the well-posedness result in the critical space
$L^{3/(2\alpha-1)}_x$ in the Appendix.
In particular,
this shows that the non-uniqueness in Corollary \ref{Cor-Nonuniq-Supercri} $(i)$
is sharp in the supercritical Lebesgue spaces.

Moreover, it has been proved in \cite{W06} that,
for $\alpha >1/2$, \eqref{equa-NS} is well-posed with small data
in $\dot{B}^{\frac 52-2\alpha}_{2,q}(\mathbb{R}^d)$
for $1<q\leq \infty$.
The proof also applies to the torus case.
Hence,
the non-uniqueness in Corollary \ref{Cor-Nonuniq-Supercri} $(ii)$
is sharp in the Besov spaces.

Furthermore,
for any $s<1-2\alpha$,
we may take $\eta>0$ small enough
such that $1-2\alpha -\eta>s$.
Then, by the embedding of Besov spaces
we have for any $1\leq q\leq \infty$,
\begin{align*}
    {B}^{\frac 3p+1-2\alpha-\eta}_{p,q}
	\hookrightarrow {B}^{1-2\alpha-\eta}_{p,q}
	\hookrightarrow {B}^{s}_{\9,q}. 
\end{align*}
Hence, by virtue of Corollary \ref{Cor-Nonuniq-Supercri} $(ii)$,
we also have the non-uniqueness of weak solutions in
${B}^{s}_{\9,q}$,
for any $s<1-2\alpha$, $1\leq q\leq \infty$.
This may also be seen as a complement
to the ill-posedness results in \cite{CD14},
where the norm-inflation instability was proved  for equation \eqref{equa-NS}
with $\alpha \geq 5/4$
in the Besov spaces ${B}^{s}_{\infty,q}$,
for any $s\leq -\alpha$, $2<q\leq \infty$.

{\bf (iv) Partial regularity of weak solutions.}
In the pioneering paper \cite{leray1934},
Leray proves that
the Leray-Hopf solutions to NSE
are smooth outside a closed singular set of times,
which has zero Hausdorff $\mathcal{H}^{1/2}$ measure.
This provides another possible way to tackle the global existence problem.
In particular,
following the works of Scheffer \cite{S76,S77},
Caffarelli-Kohn-Nirenberg \cite{CKN82}
proved a space-time regularity version
and showed the existence of global Leray-Hopf solutions
which have singular sets in $\mathbb{R}^3\times \mathbb{R}^+$
of zero Hausdorff $\mathcal{H}^1$ measure.
See also the simplified proofs in \cite{Lin98,Vasseur07}.
For hyperdissipative NSE with $\alpha \in (1,5/4]$,
Katz-Pavlovi\'c \cite{KP02} proved that
the Hausdorff dimension of the singular set at the time of first blow-up is at most $5-4\alpha$.
Recently, Colombo-De Lelli-Massaccesi \cite{CDM20}
proved a stronger version of the Katz-Pavlovi\'c result,
and showed the existence of Leray-Hopf solutions
which have singular space-time sets
of zero Hausdorff $\mathcal{H}^{5-4\alpha}$ measure,
thus extending the Caffarelli-Kohn-Nirenberg theorem
to hyperdissipative NSE.

Theorem \ref{Thm-Non-hyper-NSE} shows that,
in the high dissipative regime $\alpha \in [5/4,1)$,
for any small $\eta_*>0$,
there exist weak solutions to \eqref{equa-NS}
in any small $L^\gamma_tW^{s,p}_x$-neighborhood of Leray-Hopf solutions,
$(s,\gamma,p)\in \mathcal{A}_1\cup\mathcal{A}_2$,
which coincide with the Leray-Hopf solutions near $t=0$,
and have singular sets of times with
zero Hausdorff $\mathcal{H}^{\eta_*}$ measure.

The proof of partial regularity in time takes advantage of the gluing technique,
which was developed in \cite{B15,bdls16,I18,bdsv19} to solve the famous Onsager conjecture for 3D Euler equations.
The gluing technique to singular set of weak solutions to NSE was first
implemented in \cite{bcv21},
where the Hausdorff dimension of the constructed solution is
strictly less than one.
The results of \cite{bcv21} also
imply the strong uniqueness of weak solutions for NSE in dimensions $d=3,4$.
This technique was later used by Cheskidov-Luo \cite{cl20.2}
to obtain the strong uniqueness of weak solutions in the endpoint case $(s,\gamma,p)=(0,2,\infty)$,
which have small Hausdorff dimension of the singular sets in time.
It has been also used in
\cite{CDR18,DR19} to prove the non-unique Leray-Hopf solutions
for hypodissipative NSE when $\alpha <1/3$.

{\bf (v) Non-uniqueness for MHD equations above the Lions exponent.}
The non-uniqueness problem for
magnetohydrodynamic equations (MHD for short)
has attracted increasing interests in recent years.
We refer to \cite{bbv20,fls21,fls21.2} for the recent progresses for the ideal MHD
and the relationship to the Taylor conjecture.

One delicate point here is that,
as pointed in \cite{bbv20},
the geometry of MHD equations restricts the oscillation directions
and so limits the spatial intermittency.
Hence, it is hard to control the viscosity and resistivity
of MHD equations when the exponent is larger than one.

In the recent work \cite{lzz21},
the non-uniqueness has been proved for MHD equations,
where the viscosity and resistivity exponents are
allowed to be larger than one, yet below the Lions exponent $5/4$,
based on the construction of building blocks
which are adapted to the geometry of MHD
and feature both the temporal and spatial intermittency.

We would expect that the refined building blocks and parameters in this paper
permit to obtain the non-uniqueness for MHD above the Lions exponent $5/4$.
The strong non-uniqueness with fine smoothness outside a small
fractal set in time  would also be expected.

\section{Outline of the proof}

Our proof is mainly inspired by the intermittent convex integration method
developed in \cite{bcv21,bv19b,bv19r,cl21.2,cl20.2}.
It is based on the iterative construction of approximate solutions to the
hyperdissipative Navier-Stokes-Reynolds system, namely,
for each integer $q\geq 0$,
\begin{equation}\label{equa-nsr}
	\left\{\aligned
	&\p_t \u+\nu(-\Delta)^{\alpha} \u+ \div(\u\otimes\u)+\nabla P_q=\div \rr_q,  \\
	&\div \u = 0,
	\endaligned
	\right.
\end{equation}
where the Reynolds stress $\ru$ is a symmetric traceless $3\times 3$ matrix.

In order to exploit the fine temporal singular set of approximate solutions,
we adapt the notion of the well-prepared solutions from \cite{cl20.2} here.

\begin{definition} (Well-preparedness)
 Let $\eta\in (0,\eta_*)$. We say that
 the smooth solution $(u_q,\rr_q)$ to \eqref{equa-nsr} on $[0,T]$ is well-prepared
 if there exist a set $I$ and a length scale $\theta>0$,
 such that $I$ is a union of at most $\theta^{-\eta}$ many closed intervals of length scale $5\theta$ and
  \begin{align*}
\rr_q(t,x)=0 \quad \text{if} \quad \operatorname{dist}(t,I^c)\leq \theta.
  \end{align*}
\end{definition}

Two important quantities to measure the size of the relaxation solutions $(u_q, \mathring{R}_q)$,
$q\in \mathbb{N}$,
are the frequency parameter $\lbb_q$ and the amplitude parameter $\delta_{q+2}$:
\begin{equation}\label{la}
	\la=a^{(b^q)}, \ \
	\delta_{q+2}=\lambda_{q+2}^{-2\beta}.
\end{equation}
Here $a\in \mathbb{N}$ is a large integer to be determined later,
$\beta>0$ is the regularity parameter,
$b\in 2\mathbb{N}$ is a large integer of multiple $2$ such that
\begin{align}
   b>\frac{1000}{\varepsilon\eta_*}, \ \
   0<\beta<\frac{1}{100b^2},  \label{b-beta-ve}
\end{align}
where for the given $(s,p,\gamma)\in \mathcal{A}_1$, $\varepsilon\in \mathbb{Q}_+$ is sufficiently small such that
\begin{equation}\label{e3.1}
	\varepsilon\leq\frac{1}{20}\min\{2-\alpha,\,\frac{4\a-5}{\gamma}+\frac{3}{p}-(2\a-1)-s \}\quad \text{and}\quad b\ve\in\mathbb{N},
\end{equation}
and for the given $(s,p,\gamma)\in \mathcal{A}_2$, $\varepsilon>0$ is sufficiently small such that
\begin{equation}\label{ne3.1}
	\varepsilon\leq\frac{1}{20}\min\{2-\alpha,\,\frac{2\alpha}{\gamma}+\frac{2\a-2}{p}-(2\a-1)-s \}\quad \text{and}\quad b(2-\a-8\ve)\in \mathbb{N}.
\end{equation}

The idea is then to prove the vanishing of Reynolds stress in an appropriate space
as $q$ tends to infinity.
Thus, intuitively,
the limit of $u_q$ is expected to solve the original equation \eqref{equa-NS}.
This procedure is quantified in the following iterative estimates:
\begin{align}
    & \|\u\|_{L^\9_tH^3_x} \lesssim  \lambda_{q}^{5}, \label{uh3} \\
    & \|\p_t \u\|_{L^\9_tH^2_x} \lesssim  \lambda_{q}^{8}, \label{upth2} \\
	& \|\rr_q\|_{L^\9_tH^3_x} \lesssim   \lambda_{q}^{9},\label{rh3}	\\
	& \|\rr_q\|_{L^\9_tH^4_x} \lesssim   \lambda_{q}^{10},\label{rh4}	\\
& \|\rr_q\|_{L^{1}_{t,x}} \leq  \la^{-\ve_R}\delta_{q+1}, \label{rl1}
\end{align}
where the implicit constants are independent of $q$
and $\ve_R>0$ is a small parameter such that
\begin{align*}
	\ve_R< \frac{\ve}{10}.
\end{align*}

\begin{remark}
We note that,
the approximate solutions $(u_q, \rr_q)$
are measured in the more regular spaces $L^\infty_tH^N_x$,
$N=2,3,4$, and have larger frequency upper bounds
than those in \cite{lzz21}.
This is in part due to the full oscillation and concentration
in space and time,
in order to achieve the sharp non-uniqueness in the endpoints cases.
Moreover,
it is also imposed here to be
compatible with the gluing stage in Section \ref{Sec-Concen-Rey},
in order to exploit the fine singular set of times.
\end{remark}

The crucial iteration results of the relaxation solutions $(u_{q}, \mathring{R}_{q})$
are formulated below.

\begin{theorem} [Main iteration]\label{Prop-Iterat}
Let $(s,p,\gamma)\in \mathcal{A}_1$ for $\alpha \in [5/4,2)$,
or $(s,p,\gamma) \in \mathcal{A}_2$ for $\alpha \in [1,2)$.
Then, there exist $\beta\in (0,1)$,
 $M^*>0$ large enough and $a_0=a_0(\beta, M^*)$,
 such that for any integer $a\geq a_0$,
the following holds:

Suppose that
$(\u, \ru )$ is a well-prepared solution to \eqref{equa-nsr}
for the set $I_q$ and the length scale $\theta_q$
	and satisfies  \eqref{uh3}-\eqref{rl1}.
	Then, there exists another well-prepared solution $(u_{q+1}, \mathring{R}_{q+1} )$
	to \eqref{equa-nsr} for some set $I_{q+1}\subseteq I_q$,
	$0,T\notin I_{q+1}$, and the length scale $\theta_{q+1}<\theta_q/2$,
	and $(u_{q+1}, \mathring{R}_{q+1} )$ satisfies \eqref{uh3}-\eqref{rl1} with $q+1$ replacing $q$.
	In addition, we have
	\begin{align}
		&\|u_{q+1}-u_{q}\|_{L^{2}_{t,x}} \leq  M^*\delta_{q+1}^{\frac{1}{2}}, \label{u-B-L2tx-conv}\\
        &\|u_{q+1}-u_{q}\|_{L^1_tL^{2}_{x}} \leq  \delta_{q+2}^{\frac{1}{2}},  \label{u-B-L1L2-conv}\\
		&\norm{ u_{q+1} - u_q }_{L^\gamma_tW^{s,p}_x} \leq  \delta_{q+2}^{\frac{1}{2}},\label{u-B-Lw-conv}
	\end{align}
and
\begin{align}
&\supp_t (u_{q+1}, \rr_{q+1})
\subseteq N_{\delta_{q+2}^{\frac12}}( \supp_t  (u_{q}, \rr_{q})).\label{suppru}
\end{align}
\end{theorem}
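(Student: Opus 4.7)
The plan is to execute one step of the space--time intermittent convex integration scheme, split into two substages: a \emph{gluing} stage producing an intermediate pair $(\bar u_q,\bar{\mathring R}_q)$ whose Reynolds stress is supported on small disjoint temporal intervals, and a \emph{perturbation} stage in which a carefully designed oscillation $w_{q+1}=w^{(p)}_{q+1}+w^{(c)}_{q+1}+w^{(t)}_{q+1}$ is added to $\bar u_q$ to cancel $\bar{\mathring R}_q$ to leading order. We then set $u_{q+1}=\bar u_q+w_{q+1}$ and read off the new Reynolds stress $\mathring R_{q+1}$ from the equation.

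\textbf{Gluing stage.} Partition $[0,T]\cap I_q$ into closed intervals of length $\theta_{q+1}\ll\theta_q$, chosen so the well-preparedness count $\theta_{q+1}^{-\eta}$ is consistent with the desired fractal dimension bound. On each interval solve the \emph{exact} hyperdissipative NSE \eqref{equa-NS} with initial datum $u_q(t_i,\cdot)$; Lions-type local well-posedness together with \eqref{uh3}--\eqref{upth2} provides smooth local solutions $v_i$ with quantitative $L^\infty_tH^N_x$ bounds. Glue via a partition of unity $\{\chi_i\}$, setting $\bar u_q=\sum_i\chi_i v_i$; the resulting Reynolds stress $\bar{\mathring R}_q$ is nonzero only where $\chi_i'\neq 0$, i.e.\ on a union of $\lesssim\theta_{q+1}^{-1}$ intervals of length $\sim\theta_{q+1}$. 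This defines $I_{q+1}\subset I_q$ with $0,T\notin I_{q+1}$, gives the support inclusion \eqref{suppru} by choosing $\theta_{q+1}\ll\dq^{1/2}_{q+2}$, and by standard estimates (as in \cite{bcv21}) yields quantitative $H^N_x$ control of $\bar{\mathring R}_q$ and smallness of $\bar u_q-u_q$.

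\textbf{Perturbation stage and new Reynolds stress.} Introduce frequency/concentration parameters $\laq,\mu,\rs,\rp,\sigma,\tau$ compatible with \eqref{b-beta-ve}--\eqref{ne3.1}, and define
\begin{align*}
w^{(p)}_{q+1}=\sum_{\xi} a_{(\xi)}\bigl(\bar{\mathring R}_q,\dq_{q+1}\bigr)\, g_{(\tau,\xi)}(t)\, W_{(\xi)}(\laq x),
\end{align*}
where the spatial building blocks $W_{(\xi)}$ are intermittent jets when targeting the regime $\mathcal A_1$ (endpoint $(3/p+1-2\alpha,\infty,p)$) and concentrated Mikado flows when targeting $\mathcal A_2$ (endpoint $(2\alpha/\gamma+1-2\alpha,\gamma,\infty)$), and $g_{(\tau,\xi)}$ are $\tau$-concentrated, $\sigma$-periodized temporal profiles. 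The divergence corrector $w^{(c)}_{q+1}$ is fixed by a standard $\curl$ argument, and the temporal corrector $w^{(t)}_{q+1}$ is chosen so that $\p_t w^{(t)}_{q+1}+\div(w^{(p)}_{q+1}\otimes w^{(p)}_{q+1})-\chi^2\bar{\mathring R}_q$ is purely high-frequency. The conservation estimates \eqref{u-B-L2tx-conv}--\eqref{u-B-Lw-conv} then follow from sharp $L^p$-decorrelation for the building blocks, the Geometric Lemma ensuring $\sum_\xi a_{(\xi)}^2 W_{(\xi)}\otimes W_{(\xi)}\sim \bar{\mathring R}_q$, and the temporal gain $\tau^{-1/2}$; the $L^\gamma_tW^{s,p}_x$ bound is where \eqref{e3.1}/\eqref{ne3.1} is used to get a strictly positive power of $\laq$ to spare. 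Writing the equation for $u_{q+1}$ one obtains $\mathring R_{q+1}=\mathcal R(\text{linear}+\text{Nash}+\text{transport}+\text{oscillation}+\text{corrector}+\text{compatibility})$, with $\mathcal R$ the standard inverse-divergence operator; each piece is bounded in $L^1_{t,x}$ by a product of amplitudes with $\laq^{-1}$-type frequency gains and $(\sigma\tau)^{-1}$-type temporal gains, yielding \eqref{rl1} at level $q+1$. Propagation of \eqref{uh3}--\eqref{rh4} follows by absorbing the large derivative losses into the rapidly growing $\laq^5,\laq^8,\laq^9,\laq^{10}$ bounds, using that $\laq=\la^b$ with $b$ chosen large in \eqref{b-beta-ve}.

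\textbf{Main obstacle.} The hardest point is a simultaneous balance of four competing demands: control of the hyperdissipation $\nu(-\Delta)^\alpha w_{q+1}$ (which costs $\laq^{2\alpha}$ and becomes severe as $\alpha\uparrow 2$); smallness of the oscillation error $\mathcal R\div(w^{(p)}_{q+1}\otimes w^{(p)}_{q+1}-\chi^2\bar{\mathring R}_q)$; smallness of the temporal error $\mathcal R\,\p_t w^{(t)}_{q+1}$; and, crucially, keeping $\|w_{q+1}\|_{L^\gamma_tW^{s,p}_x}\leq \dq^{1/2}_{q+2}$ at a \emph{supercritical} but very close-to-critical triple $(s,\gamma,p)$. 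This forces the temporal concentration $\tau$ to scale essentially like three-dimensional spatial intermittency in $\mathcal A_1$ and four-dimensional in $\mathcal A_2$ when $\alpha$ is close to $2$; verifying that the exponents $\rs,\rp,\sigma,\tau,\mu$ can be chosen consistently with \eqref{b-beta-ve}--\eqref{ne3.1} to make every error term gain a positive power of $\laq$ is the combinatorial heart of the argument, and is what dictates the stronger $H^3_x,H^4_x$ bounds on $(\u,\ru)$ assumed in \eqref{uh3}--\eqref{rh4}.
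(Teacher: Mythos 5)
Your overall strategy coincides with the paper's: a gluing stage (Proposition~\ref{prop-nunr}) followed by a space--time intermittent convex-integration perturbation, with intermittent jets plus a $\mu t$ phase and $g_{(\tau)}$ temporal profile for the $\mathcal{A}_1$ endpoint, concentrated Mikado flows for $\mathcal{A}_2$, amplitudes from the Geometric Lemma applied to $\Id-\tr_q/\varrho$, and a decomposition of $\mathring R_{q+1}$ into linear, oscillation and corrector errors, with the $L^1_{t,x}$ decay coming from frequency/temporal gains and the $L^\gamma_tW^{s,p}_x$ bound from \eqref{e3.1}/\eqref{ne3.1}.

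The gluing step as written is internally inconsistent and, taken literally, would not shrink the singular set. You partition into intervals of length $\theta_{q+1}$ and then say the glued Reynolds stress lives on $\lesssim\theta_{q+1}^{-1}$ intervals of length $\sim\theta_{q+1}$; that is a cover of $[0,T]$ of total length $\sim 1$, not a set of Hausdorff dimension $<\eta_*$, and it contradicts the count $\theta_{q+1}^{-\eta}$ you also invoke. The essential device is a \emph{two-scale} gluing: partition $[0,T]$ into $m_{q+1}$ subintervals of \emph{coarse} length $T/m_{q+1}$, solve \eqref{equa-NS} exactly on each with data $u_q(t_i)$, and glue with cutoffs $\chi_i$ whose transitions live on the much \emph{finer} scale $\theta_{q+1}=(T/m_{q+1})^{1/\eta}\ll T/m_{q+1}$ (with $\eta<1$). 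Then $\tr_q$ is supported on $\sim m_{q+1}= T\theta_{q+1}^{-\eta}$ intervals of length $\sim\theta_{q+1}$, so the bad set has measure $\sim\theta_{q+1}^{1-\eta}\to 0$. This separation is what lets well-preparedness with exponent $\eta$ propagate, what gives the extra factor $\theta_{q+1}$ in the $J_2$ estimate of Proposition~\ref{prop-nunr}, and ultimately what yields $d_{\mathcal H}(\mathcal B)<\eta_*$ in Theorem~\ref{Thm-Non-hyper-NSE}(iii). A single-scale partition cannot do any of this.

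A smaller structural omission: in the $\mathcal{A}_1$ case the paper needs \emph{two} temporal correctors, not one. The corrector $w^{(t)}_{q+1}$ in \eqref{veltemcor} cancels the high spatial frequency of $\div(W_{(k)}\otimes W_{(k)})$ by trading $\partial_t$ against the $\mu t$ phase of $\psi_{(k_1)}$, giving a gain $\mu^{-1}$ (constraint \eqref{setdeltat}); a separate corrector $w^{(o)}_{q+1}$ in \eqref{wo}, built from the antiderivative $h_{(\tau)}$ of $\g^2-1$, absorbs the spatial zero mode that survives the temporal decorrelation in \eqref{mag oscillation cancellation calculation}, giving a gain $\sigma^{-1}$ (constraint \eqref{setrosc2}). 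These gains involve different parameters and cannot be merged into a single ``$w^{(t)}$''; the tension between $\mu^{-1}$ and $\sigma\tau$ is precisely one of the competing constraints you flag as the main obstacle. Only for $\mathcal{A}_2$, where Mikado flows satisfy $\div(W_{(k)}\otimes W_{(k)})=0$, is $w^{(t)}_{q+1}$ unnecessary.
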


The proof of the main iteration theorem will occupy most parts of the present paper.
It relies crucially on the
gluing procedure and the approach of space-time intermittent convex integration.

\subsection{Gluing stage}

The first stage is to concentrate the Reynolds stress
into a smaller region,
which eventually enables us to concentrate
the singular times into a null set with small Hausdorff dimension.

More precisely, given
a well-prepared solution $(\u,\rr_q)$ to \eqref{equa-nsr} at level $q$,
we divide the $[0,T]$ into $\mq$ many sub-intervals $[t_i,t_{i+1}]$ with length $ T/\mq$,
where $t_i:= i T/\mq $ and $\mq$ may depend on $(\u,\rr_q)$.
Then, we solve the following hyperdissipative Navier-Stokes equations
on each small interval $[t_i,t_{i+1}+\thq]$:
\begin{equation}\label{equa-nsvi}
	\left\{\aligned
	& \p_tv_i +\nu(-\Delta)^{\alpha} v_i+(v_i\cdot \nabla )v_i + \nabla P_i=0,  \\
	& \div v_i = 0,\\
    & v_i|_{t=t_i}=\u(t_i),
	\endaligned
	\right.
\end{equation}
where $\thq=(T/\mq)^{1/\eta}$,
$\eta$ satisfies that
$\eta \simeq \eta^*$
with $\eta_*$ as in Theorem~\ref{Thm-Non-hyper-NSE}
and
\begin{align}\label{ne2.14}
\thq^{-30} \simeq m_{q+1}^{\frac{30}{\eta}}\ll \laq^{\ve},
\end{align}
where $\ve$ is as in \eqref{e3.1}-\eqref{ne3.1}.
In particular, by Lemmas \ref{mae-endpt1} and \ref{mae-endpt2},
\eqref{ne2.14} indicates that
the amplitudes of velocity perturbations
oscillate with much weaker frequency
than those of temporal and spatial building blocks.

Due to the classical local well-posedness theory,
there exists a unique local smooth solution $v_i$ to \eqref{equa-nsvi}
if $\theta_{q+1}$ is small enough (or, $\mq$ is sufficiently large).
Then, in order to construct a global approximate solution to \eqref{equa-nsr}
and concentrate the Reynolds stress into smaller subintervals,
we glue these local solutions $v_i$ together
with a partition of unity $\{\chi_i\}_i$
(see \eqref{def-chi1}-\eqref{def-chi3} below):
\[
\wt u_q:=\sum_{i=0}^{\mq-1} \chi_i v_i,
\]
which satisfies the equation
\begin{align}\label{equa-wtu}
\partial_{t} \wt u_q+ \nu(-\Delta)^\a \wt u_q +\operatorname{div}(\wt u_q \otimes \wt u_q) +\nabla \wt p=\div \tr_q,
\end{align}
for some pressure $\wt p$,
and the new Reynolds stress is of form
\begin{align}\label{def-nr}
\tr_q =\partial_t\chi_i\mathcal{R}(v_{i}-v_{i-1}) -\chi_{i}(1-\chi_{i})((v_{i}-v_{i-1})\mathring\otimes (v_{i}-v_{i-1})),
\ \ t\in [t_i, t_{i+1}],
\end{align}
where  $\mathcal{R}$ is the inverse-divergence operator
given by \eqref{calR-def},
$0\leq i\leq m_{q+1}-1$
(we let $v_{-1}\equiv 0$).

Note that,
by the definition of $\{\chi_i\}_i$,
the new Reynolds stress $\tr_q$ is supported
on a $\thq$-neighborhood of $t_i$ for each $0\leq i<\mq$.
This yields that the singular set of $\wt u_q$
can be covered by $\mq$ many small intervals of length scale $\thq$,
thereby having the small Hausdorff dimension.
Moreover, as pointed out in \cite{bcv21},
since $\u$ is already a smooth solution to \eqref{equa-NS} on a majority of $[0,T]$,
namely the $\theta_q$-neighborhood of
the complement of some small set $I_q^c$,
if $t_{i-1}$ and $t_i$ both lie in this region,
one has $\wt u_q=v_{i-1}=v_i=\u$ on $\supp (\chi_i\chi_{i-1})$.
Thus, we can define an index set $\mathcal{C}$
(see \eqref{def-indexsetb} below) to
extract those regions where $\wt u_q$ is not necessarily
an exact solution to \eqref{equa-NS},
such that the bad set $I_{q+1}$ of $(\wt u_q, \tr_q)$
is contained in $I_q$.

Last but not least,
due to the stability estimates of the local solutions to \eqref{equa-nsvi},
the new Reynolds stress shares almost the same decay rate
with the old one in the $L^1_{t,x}$ space
(see \eqref{nrl1} below).
In other words, the procedure of concentrating the Reynolds stress error
only costs a loss of $\la^{-3\ve_R/4}$ decay rate,
which is acceptable in the next stage of convex integration (see also \cite[p.8]{cl20.2}).

\subsection{Space-time convex integration stage}

The next stage is to construct the key velocity perturbations,
particularly to fulfill the interactive objectives,
i.e., estimates \eqref{uh3}-\eqref{rl1}.
We will treat the two supercritical regimes $\mathcal{A}_1$
and $\mathcal{A}_2$ separately.

{\bf $\bullet$ Endpoint case $(s,\gamma, p)=(3/p+1-2\alpha,\infty, p)$.}
For the endpoint $(s,\gamma, p)=(3/p+1-2\alpha,\infty, p)$,
$\alpha \in[5/4,2)$,
we choose the intermittent jets $W_{(k)}$,
developed in \cite{bcv21},
as the main spatial building blocks (see \eqref{snwd} below).
The intermittent jets are indexed by four parameters $(\rs,\rp,\lambda,\mu)$,
where $\rs$ and $\rp$ parameterize the concentration of the flows,
$\lambda$ is the frequency parameter,
and $\mu$ is the temporal oscillation parameter.
One main feature of the intermittent jet is the almost 3D intermittency, i.e.,
\begin{align}
\|W_{(k)}\|_{L^\9_tL_x^1}\lesssim \lambda^{-\frac32+},
\end{align}
which succeeds in controlling the dissipativity
$(-\Delta)^\alpha$ when $\alpha \leq 5/4$,
see \cite{bcv21}.

In order to control the high viscosity $(-\Delta)^\alpha$
when $\alpha$ is beyond the Lions exponent $5/4$,
we need to oscillate $W_{(k)}$ in time by using the temporal concentration functions
(see \eqref{gk} below),
which are indexed by two parameters $(\tau, \sigma)$
and provide the additional intermittency.

The crucial constrains to run the convex integration mechanism
are listed in the following:
\begin{subequations}\label{constset}
\begin{align}
 \lambda^s\rs^{\frac{2}{p}-1}\rp^{\frac1p-\frac12}\tau^{\frac12-\frac{1}{\gamma}} &\ll 1 \quad\ (w_{q+1}^{(p)}\in L^\gamma_tW^{s,p}_x)   \label{setpw} \\
 \mu\rs^{2}\rp^{-\frac12}\tau^{-\frac12}&\ll 1 \quad\ (\text{Time derivative error for}\ w_{q+1}^{(p)})  \label{setpt} \\
 \lambda^{2\a-1}\rs\rp^{\frac12}\tau^{-\frac12}&\ll 1  \quad\ (\text{Hyperdissipativity error for}\ w_{q+1}^{(p)}) \label{setdeltap} \\
 \lambda^{2\a-1}\mu^{-1}&\ll 1  \quad\ (\text{Hyperdissipativity error for}\ w_{q+1}^{(t)}) \label{setdeltat} \\
 \lambda^{-1}\rs^{-1 }&\ll 1 \quad\ (\text{Oscillation error for}\ w_{q+1}^{(p)}) \label{setrosc1} \\
\mu^{-1} \sigma\tau&\ll 1 \quad\ (\text{Oscillation error for}\ w_{q+1}^{(t)}) \label{setrosc2}
\end{align}
\end{subequations}

It is important here that,
in order to ensure the validity of these constrains,
one shall exploit the temporal intermittency
in an almost optimal way.
It turns out that,
the suitable temporal intermittency will roughly equal to
$(4\alpha-5)$-dimensional
spatial intermittency.
In particular,
the temporal intermittency almost achieves the 3D
spatial intermittency when $\alpha$ is close to $2$.
We show that there do exist six admissible parameters $(\rs,\rp,\lambda,\mu,\tau,\sigma)$
and give the
precise choice in \eqref{larsrp} below.

{\bf $\bullet$ Endpoint case $(2\alpha/\gamma+1-2\alpha, \gamma, \infty)$.}
Regarding the other endpoint $(2\alpha/\gamma+1-2\alpha, \gamma, \infty)$,
$\alpha\in [1,2)$,
one may be inclined to use the building blocks in the previous endpoint case,
which, unfortunately, leads to the emptiness of
the admissible parameters for the constrains \eqref{setpw}-\eqref{setrosc2}.
This is mainly due to the presence of the $\mu t$ term in the intermittent jets,
which gives rise to the restrictions \eqref{setdeltat} and \eqref{setrosc2}
that contradict with each other.

Inspired by the work \cite{cl20.2},
we use the concentrated Mikado flows instead as the spatial building blocks
(see \eqref{snwd-endpt2} below),
which are indexed by two parameters $(\rs,\lambda)$.
We note that the $\mu t$ term and
the concentration parameter $\rp$
are not involved in the concentrated Mikado flows.
This permits to reduce the constrains to
\begin{subequations}\label{constset2}
\begin{align}
 \lambda^s\rs^{\frac{2}{p}-1}\tau^{\frac12-\frac{1}{\gamma}} &\ll 1  \quad\ (w_{q+1}^{(p)}\in L^\gamma_tW^{s,p}_x)  \label{setpw.2} \\
 \sigma\lambda^{-1}\rs\tau^{\frac12}&\ll 1 \quad\ (\text{Time derivative error for}\ w_{q+1}^{(p)}) \label{setpt.2} \\
 \lambda^{2\a-1}\rs\tau^{-\frac12}&\ll 1 \quad\ (\text{Hyperdissipativity error for}\ w_{q+1}^{(p)})  \label{setdeltap.2} \\
 \lambda^{-1}\rs^{-1 }&\ll 1 \quad\ (\text{Oscillation error for}\ w_{q+1}^{(p)})  \label{setrosc1.2}
\end{align}
\end{subequations}
The absence of the large parameter $\mu$ permits more flexibility for the
choice of parameters $(\rs, \lambda, \tau, \sigma)$.
It turns out that,
there do exist four admissible parameters to fulfill the constrains \eqref{setpw.2}-\eqref{setrosc1.2}.
Even though the Mikado flows give at most 2D intermittency,
the suitable temporal building blocks would provide
much more intermittency,
which almost reaches 4D spatial intermittency when $\alpha$ is close to $2$.
The precise choice of the four parameters is given in \eqref{larsrp-endpt2} below.

Let us mention that,
unlike the intermittent jets,
the concentrated Mikado flows proved 2D intermittency,
which is insufficient to handle the endpoint case $(3/p+1-2\alpha,\infty,p)$.
Thus, the building blocks for the endpoint $(2\alpha/\gamma+1-2\alpha, \gamma, \infty)$
are not applicable to the previous endpoint case.
It would be very interesting to construct the building blocks
in a unified manner
for both endpoints and even for the rest of critical scaling values in \eqref{critical-LPS-hyperNSE},
which still remains open.
Nevertheless, once the velocity perturbations constructed,
we prove Theorems \ref{Thm-Non-hyper-NSE}
and \ref{Thm-hyperNSE-Euler-limit} by using unified arguments.

The rest structure of this paper is organized as follows.
In Section \ref{Sec-Concen-Rey} we use the gluing technique to construct
new approximate solutions.
In particular, the new Reynolds stress concentrates
on smaller temporal supports.
Then, Sections \ref{Sec-Flow-Endpt1} and \ref{Sec-Rey-Endpt1} are mainly devoted to the
endpoint point case $(3/p+1-2\alpha,\infty,p)$.
More precisely, we first construct the velocity perturbations and
prepare the important algebraic identities and analytic estimates in
Section \ref{Sec-Flow-Endpt1}.
Then,
we treat the Reynolds stress in Section \ref{Sec-Rey-Endpt1}.
The other endpoint case $(2\alpha/\gamma+1-2\alpha, \gamma, \infty)$
is mainly treated in Section \ref{Sec-Endpt2}.
At last, the proofs of main results are contained in Section \ref{Sub-Proof-Main}.
Section \ref{Set-App}, i.e., the Appendix, contains some preliminary results used in the proof.

\section{Concentrating the Reynolds error}  \label{Sec-Concen-Rey}

This section is devoted to construct a new smooth solution $(\wt u_q, \tr_q)$ to \eqref{equa-nsr}.
In particular, the new Reynolds stress concentrates on much smaller intervals,
while still keeping the rapid decay in the space $L^1_{t,x}$.

For this purpose,
we divide the time interval $[0,T]$ into $m_{q+1}$ many subintervals $[t_i, t_{i+1}]$,
and denote by $\theta_{q+1}$ the length scale of bad sets supporting the new Reynolds stress.
The two parameters $m_{q+1}$ and $\theta_{q+1}$ are chosen in the following way
\begin{align}\label{def-mq-thetaq}
T/\mq=\la^{-12}, \quad
\theta_{q+1}:=(T/m_{q+1})^{1/\eta} \simeq \lambda_q^{-\frac{12}{\eta}},
\end{align}
where $\eta$ is a small constant such that
$$0<\frac{\eta_*}{2}<\eta<\eta_*<1.$$
Without loss of generality, we assume $\mq$ is an integer to
such that the time interval is perfectly divided.

We also recall from \cite{dls13} the inverse-divergence operator $\mathcal{R}$,
defined by
 \begin{align} \label{calR-def}
 	& (\mathcal{R} v)^{kl} := \partial_k \Delta^{-1} v^l + \partial_l \Delta^{-1} v^k - \frac{1}{2}(\delta_{kl} + \partial_k \partial_l \Delta^{-1})\div \Delta^{-1} v,
 \end{align}
where $v$ is mean-free, i.e., $\int_{\mathbb{T}^3} v dx =0$.
Note that, the inverse-divergence operator $\mathcal{R}$
maps mean-free functions to symmetric and trace-free matrices.
Moreover, one has the algebraic identity
\begin{align*}
	\div \mathcal{R}(v) = v.
\end{align*}

Proposition \ref{prop-nunr} below is the main result of this section,
which provides a new well-prepared solution
$(\wt u_q, \tr_q)$ to \eqref{equa-nsr}
with concentrated support set $I_{q+1}$ and
much smaller length scale $\theta_{q+1}$.

\begin{proposition}   \label{prop-nunr}
Consider $(s,p,\gamma)\in \mathcal{A}_1$ for $\alpha \in [5/4,2)$,
or $(s,p,\gamma) \in \mathcal{A}_2$ for $\alpha \in [1,2)$.
Let $(u_q,\rr_q)$ be a well-prepared smooth solution to \eqref{equa-nsr}
for some set $I_q$ and a length scale $\theta_q$.
Then, there exists another well-prepared solution $(\wt u_q,\tr_q)$ to \eqref{equa-nsr}
for some set $I_{q+1}\subseteq I_q$, $0,T\notin I_{q+1}$
and the length scale $\theta_{q+1}(<\theta_q/2)$,
satisfying:
 \begin{align}
    & \tr_q(t,x)=0 \quad \text{if} \quad \operatorname{dist}(t,I_{q+1}^c)\leq {3}\thq/2,\label{suppnr}\\
    & \|\wt u_q\|_{L^\infty_tH^3_x}\lesssim \lambda_{q}^{5},\label{nuh3}\\
    & \|\wt u_q -\u \|_{L^{\infty}_tL^2_x} \lesssim  \la^{-3},\label{uuql2}\\
	& \|\tr_q\|_{L^{1}_{t,x}} \leq  \la^{-\frac{\ve_R}{4}}\delta_{q+1}, \label{nrl1}\\
    & \|\partial_t^M \nabla^N \tr_q\|_{L^\infty_tH^3_x} \lesssim \thq^{-M-1}\mq^{-\frac{N}{2\alpha}}\lambda_q^{5}
	\lesssim \thq^{-M-N-1} \lambda_q^{5},  \label{nrh3}
\end{align}
where the implicit constants are independent of $q$.
\end{proposition}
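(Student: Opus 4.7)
The plan is to implement the gluing technique outlined before the statement. First, I would use the local well-posedness theory for \eqref{equa-NS} (applicable since $\alpha \geq 1$) to solve each Cauchy problem \eqref{equa-nsvi} on the interval $[t_i, t_{i+1} + \thq]$ with initial datum $\u(t_i)$. Since $\|\u\|_{L^\infty_t H^3_x} \lesssim \la^5$ by \eqref{uh3} and the interval length is at most $T/\mq + \thq \lesssim \la^{-12}$, for $a$ sufficiently large these local smooth solutions $v_i$ exist and inherit the bound $\|v_i\|_{L^\infty_t H^N_x} \lesssim \la^{5}$ for $N = 3$ (and analogous bounds at higher $N$ using \eqref{rh3}, \eqref{rh4}).

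The second step is to establish stability of $v_i$ around $\u$. The difference $w_i := v_i - \u$ solves a linear equation
\begin{align*}
\partial_t w_i + \nu (-\Delta)^\alpha w_i + (v_i \cdot \nabla) w_i + (w_i \cdot \nabla) \u + \nabla(P_i - P_q) = -\div \ru,
\end{align*}
with $w_i(t_i) = 0$. Energy estimates at various levels $H^N$ using \eqref{uh3}--\eqref{rh4} and Gr\"onwall's inequality over a time window of length $\la^{-12}$ give $\|w_i\|_{L^\infty_t H^N_x} \lesssim \la^{-12} \la^{C_N}$, which in particular produces \eqref{uuql2}. A parallel argument on the overlap $[t_i, t_i + \thq]$ yields $\|v_i - v_{i-1}\|_{L^1_t L^1_x} \lesssim \|\ru\|_{L^1(t_{i-1}, t_{i+1}; L^1_x)}$, which is the critical estimate driving the $L^1_{t,x}$-gain in \eqref{nrl1}.

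The third step is the gluing itself. I would take a partition of unity $\{\chi_i\}$ adapted to the cover $\{[t_i - \thq, t_{i+1}]\}$, with $\chi_i \equiv 1$ on $[t_i + \thq, t_{i+1}]$ and satisfying $\|\partial_t^M \chi_i\|_\infty \lesssim \thq^{-M}$. Setting $\wt u_q := \sum_i \chi_i v_i$ and using that each $v_i$ solves \eqref{equa-NS} exactly, a direct computation gives \eqref{equa-wtu} with $\tr_q$ as in \eqref{def-nr}; the inverse-divergence operator $\mathcal{R}$ from \eqref{calR-def} is the tool turning $\partial_t \chi_i (v_i - v_{i-1})$ into a divergence. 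I would define $I_{q+1}$ as the union of those transition intervals $[t_i - \thq, t_i + \thq]$ on which $\ru$ is not identically zero on $[t_{i-1}, t_i]$; the well-preparedness of $\ru$ relative to $I_q$ ensures $I_{q+1} \subseteq I_q$, and one can arrange $0, T \notin I_{q+1}$ since $\ru$ vanishes near the endpoints by well-preparedness. The number of such sub-intervals is bounded by $\thq^{-\eta}$ via \eqref{def-mq-thetaq}.

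The final step is to verify \eqref{nuh3}--\eqref{nrh3}. Bounds \eqref{nuh3}, \eqref{uuql2} follow from the triangle inequality and the stability estimates. For \eqref{nrl1}, the linear piece of \eqref{def-nr} is supported on a set of measure $\lesssim \mq \thq$ with $\|\partial_t \chi_i\|_\infty \lesssim \thq^{-1}$ and $\|\mathcal{R}(v_i - v_{i-1})\|_{L^1_x} \lesssim \|v_i - v_{i-1}\|_{L^1_x}$, hence its $L^1_{t,x}$ norm is controlled by $\|\ru\|_{L^1_{t,x}}$; the quadratic piece picks up a further small factor from $\|v_i - v_{i-1}\|_{L^\infty_{t,x}}$. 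Bound \eqref{nrh3} comes from differentiating \eqref{def-nr} in $t$ and $x$, using $\|\partial_t^M \chi_i\|_\infty \lesssim \thq^{-M}$ and iterating the energy method to control higher Sobolev norms of $v_i - v_{i-1}$. The main obstacle will be calibrating the decay budget so that \eqref{nrl1} actually gains the improved factor $\la^{-\ve_R/4}$ beyond the input $\la^{-\ve_R}\dq$: the cutoff derivative loss $\thq^{-1}$, the support measure $\mq\thq$, and the use of $\mathcal{R}$ must collectively leave at least $\la^{-\ve_R/4}$ of slack, and this is exactly what the choices $T/\mq = \la^{-12}$ and $\thq \simeq \la^{-12/\eta}$ are calibrated to achieve.
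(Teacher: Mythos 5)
Your overall gluing architecture matches the paper's: solve \eqref{equa-nsvi} locally, establish stability of $v_i$ around $u_q$, glue with a partition of unity, and read off $\tr_q$ from \eqref{def-nr}. However, there is a genuine gap in the key estimate \eqref{nrl1}. You assert $\|\mathcal{R}(v_i - v_{i-1})\|_{L^1_x} \lesssim \|v_i - v_{i-1}\|_{L^1_x}$, but the inverse-divergence operator $\mathcal{R}$ from \eqref{calR-def} is a Calder\'on-Zygmund-type operator and is \emph{not} bounded on $L^1$. This is not a cosmetic issue: the whole point of \eqref{nrl1} is to carry through the $L^1_{t,x}$-smallness of $\ru$ to $\tr_q$ with only a modest loss $\la^{3\ve_R/4}$, and the term $\partial_t\chi_i \, \mathcal{R}(v_i - v_{i-1})$ needs to be handled in a Lebesgue exponent where $\mathcal{R}$ is bounded.

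The paper's fix has two ingredients, both missing from your outline. First, one works in $L^\rho_x$ for a $\rho > 1$ chosen extremely close to $1$ (see the constraint just before \eqref{est-j1}), so that the Gagliardo-Nirenberg interpolation $\|\ru\|_{L^\rho_x} \lesssim \|\ru\|_{L^1_x}^{1-\theta}\|\ru\|_{H^3_x}^{\theta}$ carries an exponent $\theta = 2(\rho-1)/(3\rho)$ that can be made arbitrarily small. Second, and crucially, one needs the stability bound \eqref{est-rvi}, $\|\mathcal{R}w_i\|_{L^\infty L^\rho} \lesssim \int \|\ru\|_{L^\rho}$, \emph{without} the extra derivative $|\nabla|$ that naive energy or Duhamel estimates for $w_i$ would produce. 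If one only had $\|\mathcal{R}w_i\|_{L^\rho} \lesssim \|w_i\|_{L^\rho} \lesssim \int \||\nabla|\ru\|_{L^\rho}$, the interpolation exponent jumps to $\theta \approx 8/9 - 2/(3\rho) \geq 2/9$ even as $\rho \to 1$, and then $\|\ru\|_{H^3}^{\theta} \sim \la^{2}$ overwhelms $\delta_{q+1}$ and the bound fails. The paper bypasses this derivative loss by setting $z := \Delta^{-1}\curl\, w_i$, noting $\|\mathcal{R}w_i\|_{L^\rho} \lesssim \|z\|_{L^\rho}$, and deriving a separate evolution equation \eqref{equa-z} for $z$ whose source is $\Delta^{-1}\curl\div\ru$ — a zeroth-order operator applied to $\ru$ — then closing a bootstrap in $L^\rho$. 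Your energy-plus-Gr\"onwall outline in $H^N$ does not produce this, so the $L^1_{t,x}$-decay budget in \eqref{nrl1} would not close as written.
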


\subsection{Stability estimates}

We recall the regularity estimates of the strong solutions
in \cite{bcv21} for $\a\in [1,5/4)$,
the proof there also applies to the case where $\a\in [1,2)$.

\begin{proposition} (\cite{bcv21}) \label{Prop-LWP-Hyper-NLSE}
Let $\a\in [1,2)$, $v_0\in H_x^3(\T^3)$ be a mean free function and consider the
Cauchy problem for \eqref{equa-NS} with initial data $v|_{t=t_0}=v_0$. If
\begin{align} \label{t*-t0-v0H3}
	 0<t_*-t_{0}\leq \frac{c}{\|v_0\|_{H^3_x}}
\end{align}
for some universal constant $c\in (0,1]$,
then there exists a unique strong solution $v$ to \eqref{equa-NS} on
$[t_0,t_{*}]$ satisfying
\begin{align}
\sup _{t \in\left[t_0,t_{*}\right]}\|v(t)\|_{L^{2}_x}^{2}
+2 \int_{t_{0}}^{t_{*}}\|v(t)\|_{\dot{H}_x^{\alpha}}^{2} d t
& \leq\left\|v_{0}\right\|_{L^{2}_x}^{2}, \label{vl2}\\
\sup _{t \in\left[t_0,t_{*}\right]}\|v(t)\|_{H^{3}_x} &\leq 2\left\|v_{0}\right\|_{H^{3}_x}. \label{vh3}
\end{align}
Moreover, if
\begin{align}  \label{con-pdvh3}
 0< t_{*}-t_0 \leq  \frac{ c}{ \norm{v_0}_{H^3_x}  (1+ \norm{v_0}_{L^2_x})^{\frac{1}{2\alpha-1}}},
\end{align}
then it holds that for any $N\geq 0$ and $M \in\{0,1\}$,
\begin{align}
\sup_{t\in (t_0,t_{*}]}  |t-t_0|^{\frac{N}{2\alpha}+M}
\norm{\partial_t^M \na^N v(t)}_{H_x^3}   \lesssim  \norm{v_0}_{H^3_x}\,,
\label{pdvh3}
\end{align}
where the implicit constant depends on $\alpha, N, M$.
\end{proposition}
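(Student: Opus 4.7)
The plan is to establish the estimates in three stages: (i) the basic $L^2$ energy identity, (ii) an a priori $H^3$ bound via commutator estimates, from which the local existence time $t_*-t_0 \lesssim \|v_0\|_{H^3}^{-1}$ emerges, and (iii) a bootstrap argument exploiting the parabolic smoothing of $(-\Delta)^\alpha$ to produce the weighted higher-regularity bound \eqref{pdvh3}.

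First I would derive \eqref{vl2}. Testing \eqref{equa-NS} against $v$, the pressure term drops by $\div v = 0$ and the convection term vanishes after integration by parts, yielding
$$\frac{d}{dt}\|v(t)\|_{L^2_x}^2 + 2\nu\|v(t)\|_{\dot H^\alpha_x}^2 = 0,$$
which integrates to \eqref{vl2}. Next, for the $H^3$ estimate I would apply $\nabla^3$ to the equation and test against $\nabla^3 v$. Using the Kato--Ponce/Moser commutator bound $\|[\nabla^3,v\cdot\nabla]v\|_{L^2_x} \lesssim \|\nabla v\|_{L^\infty_x}\|v\|_{H^3_x}$ together with $(v\cdot\nabla)\nabla^3 v \perp \nabla^3 v$ by divergence-freeness, and then the 3D embedding $H^3_x \hookrightarrow W^{1,\infty}_x$, one obtains
$$\frac{d}{dt}\|v(t)\|_{H^3_x}^2 + 2\nu\|v(t)\|_{\dot H^{3+\alpha}_x}^2 \lesssim \|v(t)\|_{H^3_x}^3.$$
A comparison ODE argument then yields $\|v(t)\|_{H^3_x} \leq 2\|v_0\|_{H^3_x}$ on any interval $[t_0,t_*]$ satisfying \eqref{t*-t0-v0H3}, giving \eqref{vh3}. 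Uniqueness and existence follow by a standard Galerkin/contraction scheme in $C_tL^2_x$, where the $H^3$ bound guarantees the strong formulation.

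The delicate point is the smoothing estimate \eqref{pdvh3}. Writing the mild formulation
$$v(t) = e^{-\nu(t-t_0)(-\Delta)^\alpha}v_0 - \int_{t_0}^{t} e^{-\nu(t-s)(-\Delta)^\alpha}\mathbb{P}\,\div(v\otimes v)(s)\,ds,$$
and invoking the heat-semigroup bound $\|(-\Delta)^{N/2}e^{-\tau\nu(-\Delta)^\alpha}\|_{L^2_x\to L^2_x} \lesssim (\nu\tau)^{-N/(2\alpha)}$, I would prove by induction on $N$ that the weighted quantity $(t-t_0)^{N/(2\alpha)}\|v(t)\|_{\dot H^{3+N}_x}$ is bounded by $\|v_0\|_{H^3_x}$ on the shorter interval \eqref{con-pdvh3}. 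At the inductive step the nonlinear term is controlled using the already-established $H^3$ bound and the inductive hypothesis for lower $N$, the extra low-frequency factor $(1+\|v_0\|_{L^2_x})^{1/(2\alpha-1)}$ in \eqref{con-pdvh3} arising from interpolating $\|v\|_{\dot H^{3+N}}$ between $\|v\|_{L^2}$ and higher norms when absorbing the quadratic nonlinearity into the dissipation. The case $M=1$ is then handled by substituting from the equation
$$\partial_t v = -\nu(-\Delta)^\alpha v - \mathbb{P}\,\div(v\otimes v)$$
and estimating each term in $H^3_x$ with the appropriate weight, the dissipative part contributing $(t-t_0)^{-1}\|v_0\|_{H^3_x}$ by the spatial smoothing with $N=2\alpha$ and the nonlinearity being controlled by the product of two $H^3$ bounds.

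The main obstacle is the precise bookkeeping of the fractional smoothing scale $N/(2\alpha)$ for all $N\geq 0$, together with a careful choice of interpolation in absorbing the quadratic nonlinearity into the dissipation in the regime $\alpha \in [1,2)$; this is precisely where the $(1+\|v_0\|_{L^2_x})^{1/(2\alpha-1)}$ weight in \eqref{con-pdvh3} enters, and must be tracked uniformly in $N$ so that the implicit constants depend only on $\alpha, N, M$ as claimed.
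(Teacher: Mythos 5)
The paper does not actually prove this proposition: it is cited from \cite{bcv21} (where it is stated for $\alpha\in[1,5/4)$), with only the remark that the same proof applies for $\alpha\in[1,2)$. So there is no internal argument to compare against, and what can be judged is whether your sketch would close into a proof. Your overall strategy --- $L^2$ energy identity, $H^3$ estimate via a Kato--Ponce commutator and an ODE comparison, then Duhamel with semigroup smoothing for the weighted higher-order bounds --- is indeed the standard scheme used in \cite{bcv21}, and steps (i) and (ii) are fine.

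The gap is in step (iii), and it is not a small one. You identify the exponent $(1+\|v_0\|_{L^2_x})^{1/(2\alpha-1)}$ in \eqref{con-pdvh3} as coming from ``interpolating $\|v\|_{\dot H^{3+N}}$ between $\|v\|_{L^2}$ and higher norms,'' but this is not where the factor comes from and, more importantly, the claim that the induction closes with this factor is never checked. Running the Duhamel iteration for $N=1$ with the natural bookkeeping, the nonlinear contribution after multiplying by the weight $|t-t_0|^{1/(2\alpha)}$ is of order
\[
|t-t_0|^{\frac{2\alpha-1}{2\alpha}}\,\|v\otimes v\|_{L^\infty_tH^3_x},
\]
and to absorb this into $\|v_0\|_{H^3_x}$ one must replace the crude algebra bound $\|v\otimes v\|_{H^3_x}\lesssim\|v\|_{H^3_x}^2$ by a tame estimate $\|v\otimes v\|_{H^3_x}\lesssim\|v\|_{H^3_x}\|v\|_{L^\infty_x}$ and then interpolate $\|v\|_{L^\infty_x}$ (not $\|v\|_{\dot H^{3+N}}$) between the decaying $\|v\|_{L^2_x}$ and the bounded $\|v\|_{H^3_x}$; even so, a short computation shows the resulting powers of $\|v_0\|_{H^3_x}$ do not cancel under \eqref{con-pdvh3} without a more careful splitting of the Duhamel integral near the two endpoints and a sharper distribution of derivatives between the semigroup and the product. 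Your sketch asserts the induction ``is bounded by $\|v_0\|_{H^3_x}$'' without this verification, which is precisely the delicate point. In addition, for $M=1$ you invoke ``spatial smoothing with $N=2\alpha$,'' but $2\alpha$ is not an integer for $\alpha\in(1,2)$; the inductive spatial claim would first need to be extended to fractional orders $|\nabla|^N v$ for real $N\ge 0$ before the time-derivative estimate can be read off from the equation. Neither issue is fatal to the approach, but as written the proposal does not establish \eqref{pdvh3}.
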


On each subinterval $[t_i, t_{i+1}+\theta_{q+1}]$,
$0\leq i\leq m_{q+1}-1$,
we solve the following hyperdissipative Navier-Stokes equations
\begin{equation}\label{equa-nsuq}
	\left\{\aligned
	&\p_tv_i +\nu(-\Delta)^{\alpha} v_i+(v_i\cdot \nabla )v_i + \nabla P_i=0,  \\
	& \div \ v_i = 0,\\
    & v_i|_{t=t_i}=u_q(t_i).
	\endaligned
	\right.
\end{equation}
The existence of strong solutions to \eqref{equa-nsuq} can be guaranteed by the classical local well-posedness
theory of the hyperdissipative Navier-Stoke equations.
Actually, by \eqref{uh3} and \eqref{upth2},
the conditions \eqref{t*-t0-v0H3} and \eqref{con-pdvh3}
are verified with $t_*, t_0$ replaced by $t_{i+1}+\theta_{q+1}$ and $t_i$,
respectively. Thus, there exists a unique solution $v_i$ to \eqref{equa-nsuq}
on $[t_i, t_{i+1}+\theta_{q+1}]$.

Let $w_i:=u_q-v_i$.
Then $w_i: [t_i, t_{i+1}]\times \T^3\rightarrow \R^3$ satisfies the following equations
\begin{equation}\label{equa-vi}
\left\{\begin{array}{l}
\partial_{t} w_{i}+(-\Delta)^\a w_{i}+\div(v_i\otimes w_i+w_i\otimes u_q )+\nabla p_i =\div \rr_q, \\
\operatorname{div} w_{i}=0, \\
w_{i}|_{t=t_i}=0,
\end{array}\right.
\end{equation}
for some $p_i:[t_i,t_{i+1}]\times \T^3\rightarrow \R$.

Lemma \ref{lem-est-vi} below contains the stability estimates
for the solutions $w_i$ to \eqref{equa-vi}.

\begin{lemma} (Stability estimates) \label{lem-est-vi}
Let $\a\in [1,2)$, $1<\rho\leq 2$ and $(u_q,\rr_q)$ be the well-prepared solution to \eqref{equa-nsr}.
Then, there exists a universal constant $C$,
depending only on $\a$ and $\rho$,
such that the following estimates hold
\begin{align}
&\|w_i\|_{L^\9([t_i,t_{i+1}+\theta_{q+1}];L^\rho_x)}\leq C\int_{t_i}^{t_{i+1}+\theta_{q+1}}\| \abs{\nabla} \rr_q(s)\|_{L^\rho_x} ds,\label{est-vilp}\\
&\|w_i\|_{L^\9([t_i,t_{i+1}+\theta_{q+1}];H^3_x)}\leq C\int_{t_i}^{t_{i+1}+\theta_{q+1}}\| \abs{\nabla} \rr_q(s)\|_{H^3_x} ds,\label{est-vih3}\\
&\|\mathcal{R}w_i\|_{L^\9([t_i,t_{i+1}+\theta_{q+1}];L^\rho_x)}\leq C\int_{t_i}^{t_{i+1}+\theta_{q+1}}\| \rr_q(s)\|_{L^\rho_x} ds,  \label{est-rvi}
\end{align}
where $\mathcal{R}$ is the inverse divergence operator given by \eqref{calR-def}.
\end{lemma}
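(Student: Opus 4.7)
The plan is to treat all three estimates by the same scheme: project the $w_i$-equation onto divergence-free fields with the Leray projector $\mathbb{P}$, write Duhamel against the fractional heat semigroup, and close via a (singular) Gronwall inequality. More precisely, applying $\mathbb{P}$ to \eqref{equa-vi} eliminates the pressure and gives
\begin{equation*}
w_i(t)=\int_{t_i}^{t}e^{-(t-s)(-\Delta)^{\alpha}}\mathbb{P}\,\mathrm{div}\bigl[\mathring{R}_q-v_i\otimes w_i-w_i\otimes u_q\bigr](s)\,ds,
\end{equation*}
since $w_i(t_i)=0$. The key ingredients are: (a) the smoothing estimate $\|e^{-t(-\Delta)^{\alpha}}\mathbb{P}\,\mathrm{div}\,f\|_{L^{\rho}}\lesssim t^{-1/(2\alpha)}\|f\|_{L^{\rho}}$, valid for $1<\rho<\infty$ and integrable in $t$ since $\alpha\geq 1$; (b) boundedness of the Calderón--Zygmund operators $\mathbb{P}$, $\mathcal{R}$, $\mathcal{R}\mathrm{div}$ on $L^{\rho}$; and (c) the a priori bounds $\|u_q\|_{L^{\infty}_{t}H^{3}_{x}}\lesssim\lambda_q^{5}$ from \eqref{uh3} together with $\|v_i\|_{L^{\infty}_{t}H^{3}_{x}}\leq 2\|u_q(t_i)\|_{H^{3}_{x}}$ from Proposition~\ref{Prop-LWP-Hyper-NLSE}.

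For \eqref{est-vilp}, taking the $L^{\rho}$ norm of the Duhamel formula and using (a)--(b) yields
\begin{equation*}
\|w_i(t)\|_{L^{\rho}}\leq C\!\int_{t_i}^{t}\!\|\nabla\mathring{R}_q(s)\|_{L^{\rho}}ds+C\!\int_{t_i}^{t}\!(t-s)^{-\frac{1}{2\alpha}}\bigl(\|v_i\|_{L^{\infty}_{x}}+\|u_q\|_{L^{\infty}_{x}}\bigr)\|w_i(s)\|_{L^{\rho}}ds.
\end{equation*}
By Sobolev embedding $H^{3}_{x}\hookrightarrow L^{\infty}_{x}$, the bracketed factor is $\lesssim\lambda_q^{5}$, while the interval length is $\lesssim\lambda_q^{-12}+\theta_{q+1}\ll\lambda_q^{-5}$ by \eqref{def-mq-thetaq}. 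A singular Gronwall lemma (Henry-type, exploiting $\frac{1}{2\alpha}<1$) therefore absorbs the self-referential term and produces \eqref{est-vilp} with constant $C(\alpha,\rho)$.

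For \eqref{est-vih3}, I would apply $\nabla^{N}$ with $N\leq 3$ to the Duhamel identity, use the algebra bound $\|\nabla^{N}(v_i\otimes w_i+w_i\otimes u_q)\|_{L^{2}_{x}}\lesssim(\|v_i\|_{H^{3}_{x}}+\|u_q\|_{H^{3}_{x}})\|w_i\|_{H^{3}_{x}}$ (an immediate consequence of $H^{3}$ being a Banach algebra in $d=3$), and invoke exactly the same singular Gronwall argument, replacing $L^{\rho}$ by $H^{3}_{x}$. For \eqref{est-rvi}, apply $\mathcal{R}$ to the equation and note that $\mathcal{R}\mathbb{P}\,\mathrm{div}$ is bounded on $L^{\rho}$; then the source term produces $\mathring{R}_q$ directly (no gradient), and the same Gronwall scheme gives the bound with $\|\mathring{R}_q\|_{L^{\rho}}$ on the right.

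The main technical point will be setting up the singular Gronwall cleanly so that the constant depends only on $(\alpha,\rho)$: one must check that the product of the interval length and the bound on $\|u_q\|_{L^{\infty}_{x}}+\|v_i\|_{L^{\infty}_{x}}$ is $o(1)$ (which it is, by the choice \eqref{def-mq-thetaq} of $T/m_{q+1}=\lambda_q^{-12}$ against the $\lambda_q^{5}$ bound). Everything else is routine Duhamel bookkeeping and Calderón--Zygmund theory.
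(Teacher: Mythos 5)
Your treatment of \eqref{est-vilp} and \eqref{est-vih3} is essentially the paper's argument: Duhamel with $e^{-t(-\Delta)^\alpha}\mathbb{P}\,\mathrm{div}$, semigroup smoothing, and a self-improving absorption made possible by the short interval length $T/m_{q+1}+\theta_{q+1}\ll\lambda_q^{-5}$. The paper runs this as a bootstrap (assume $\|w_i\|\le 2C_*\int\|\nabla\mathring{R}_q\|$, show the factor in front is $\le 3/4$) rather than invoking a named singular Gronwall lemma, but the mechanism is identical.

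For \eqref{est-rvi}, however, there is a genuine gap. Applying $\mathcal{R}$ to the Duhamel identity and using that $\mathcal{R}\mathbb{P}\,\mathrm{div}$ is Calder\'on--Zygmund bounded does give the source term $\int\|\mathring{R}_q\|_{L^\rho}$ cleanly, but the nonlinear term becomes $\int\|\mathcal{R}\mathbb{P}\,\mathrm{div}(v_i\otimes w_i+w_i\otimes u_q)\|_{L^\rho}\,ds\lesssim(\|v_i\|_{L^\infty_{t,x}}+\|u_q\|_{L^\infty_{t,x}})\int\|w_i\|_{L^\rho}\,ds$, which is controlled by $\|w_i\|_{L^\rho}$ and not by $\|\mathcal{R}w_i\|_{L^\rho}$. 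There is no Gronwall inequality to close here: you cannot recover $\|w_i\|_{L^\rho}$ from $\|\mathcal{R}w_i\|_{L^\rho}$ since $\mathcal{R}$ smooths by one derivative, and substituting \eqref{est-vilp} would reintroduce $\int\|\nabla\mathring{R}_q\|_{L^\rho}$ on the right, defeating the purpose of \eqref{est-rvi} (which is precisely to avoid the extra gradient). The paper's proof sidesteps this by introducing $z:=\Delta^{-1}\mathrm{curl}\,w_i$, so that $\mathrm{curl}\,z=-w_i$ and $\|\mathcal{R}w_i\|_{L^\rho}\lesssim\|z\|_{L^\rho}$, and then deriving a closed evolution equation for $z$ (see \eqref{equa-z}) in which the nonlinear terms take the form $z$ paired with $\nabla v_i$ or $\nabla u_q$ under Calder\'on--Zygmund operators — i.e.\ the derivative lands on the background fields, not on $z$. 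Only then does the Gronwall/bootstrap close in $\|z\|_{L^\rho}$, giving $\int\|\mathring{R}_q\|_{L^\rho}$ with no gradient. This vorticity-potential transformation is the key missing idea in your proposal.
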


\begin{proof}
Without loss of generality, we may consider the case $t_i=0$. Let $\P_{H}$ denote the Helmholtz-Leray projector, i.e.,
$\P_{H}=\Id-\nabla\Delta^{-1}\div$.
In order to prove \eqref{est-vilp},
we apply the semigroup method and reformulate equation \eqref{equa-vi}
as follows
\begin{align}\label{e2.12}
  w_i(t)=\int_{0}^{t} e^{-(t-s)(-\Delta)^\a} \P_H \div (\rr_q-v_i\otimes w_i-w_i\otimes u_q )(s)\d s.
\end{align}
Applying the classical semigroup estimates (cf. \cite[(3.14)]{bcv21}) to \eqref{e2.12}
we get
\begin{align}\label{e2.13}
 \|w_i(t)\|_{L^\rho_x} & \leq \int_{0}^{t} \|e^{-(t-s)(-\Delta)^\a}  \P_H \div (\rr_q-v_i\otimes w_i-w_i\otimes u_q )(s)\|_{L^\rho_x}\d s \notag\\
  & \lesssim \int_{0}^{t} \|\abs{\nabla}\rr_q\|_{L^\rho_x}+(t-s)^{-\frac{1}{2\a}}\|(v_i\otimes w_i+w_i\otimes u_q )(s)\|_{L^\rho_x}\d s \notag \\
 & \leq C_* \int_{0}^{t} \|\abs{\nabla}\rr_q\|_{L^\rho_x}+(t-s)^{-\frac{1}{2\a}}(\|v_i(s)\|_{L^\9_x}+\|u_q(s)\|_{L^\9_x})\|w_i(s)\|_{L^\rho_x}\d s,
\end{align}
for some universal constant $C_*$ depending only on $\rho$ and $\a$.

We claim that for all $t\in [0,t_1+\theta_{q+1}]$,
\begin{align}\label{e2.14}
\|w_i(t)\|_{L^\rho_x}\leq 2 C_* \int_{0}^{t} \|\abs{\nabla}\rr_q(s)\|_{L^\rho_x}ds.
\end{align}

We prove \eqref{e2.14} via the bootstrap argument.
First note that,
\eqref{e2.14} is valid for $t=0$.
Moreover, if \eqref{e2.14} holds,
we prove that the same estimate also holds with
the constant $2C_*$
replaced by a smaller constant such as $3C_*/2$.
To this end, plugging \eqref{e2.14} into \eqref{e2.13} we get
\begin{align}
\norm{w_i(t)}_{L^\rho_x}
& \leq 2 C_* \int_{0}^{t} \|\abs{\nabla}\rr_q(s)\|_{L^\rho_x}ds\( \frac{1}{2} + C_*\left( \norm{v_i}_{L^\infty_{t,x}} + \norm{u_q}_{L^\infty_{t,x}} \right) \int_0^t (t-s)^{-\frac{1}{2\alpha}}ds\) \notag\\
&\leq   2 C_* \int_{0}^{t} \|\abs{\nabla}\rr_q(s)\|_{L^\rho_x}ds\( \frac{1}{2} + \frac{2\a C_*}{2\a-1}t^{1-\frac{1}{2\a}}\left( \norm{v_i}_{L^\infty_{t,x}} + \norm{u_q}_{L^\infty_{t,x}} \right) \) .
\label{e2.15}
\end{align}
Since $\a\in [1,2)$, for any $t_1$ small enough such that
\begin{align} \label{e2.16}
    \frac{2\alpha C_*}{2\alpha-1} (t_1+\theta_{q+1})^{1-\frac{1}{2\a}} \left( \norm{v_i}_{L^\infty_{t,x}} + \norm{u_q}_{L^\infty_{t,x}} \right) \leq \frac 14,
\end{align}
we have \eqref{e2.14} with the constant replaced by ${3}C_*/2$.
Concerning the left-hand-side of \eqref{e2.16},
using the Sobolev embedding $H^3_x\hookrightarrow L^\9_x$,
\eqref{uh3}, \eqref{def-mq-thetaq} and \eqref{vh3} we have
\begin{align}
  \frac{2\alpha C_*}{2\alpha-1}
  (t_1+\theta_{q+1})^{1-\frac{1}{2\a}}\(\norm{v_i}_{L^\infty_{t,x}} + \norm{u_q}_{L^\infty_{t,x}} \)
  &\leq 4CC_* (T/m_{q+1})^{1-\frac{1}{2\a}}\left( \norm{v_i}_{L^\infty_{t}H^3_x} + \norm{u_q}_{L^\infty_{t}H^3_x} \right)\notag\\
  &\leq 12CC_* (\la^{-12})^{\frac12+\frac{\a-1}{2\a}} \norm{u_q}_{L^\infty_{t}H^3_x} \notag\\
  &\leq C' (\la^{-12})^{\frac12+\frac{\a-1}{2\a}}\la^5
  \leq C' \la^{-1},
\end{align}
for some universal constant $C'$.
Thus, \eqref{e2.16} is verified
for $a$ sufficiently large such that $C'\la^{-1}\leq 1/4$.
This yields \eqref{e2.14} and so \eqref{est-vilp}.

Regarding \eqref{est-vih3},  since $H^3_x$ is an algebra,
using the classical semigroup estimates we get
\begin{align}\label{vih3}
\|w_i(t)\|_{H^3_x} & \leq \int_{0}^{t} \|e^{-(t-s)(-\Delta)^\a}  \P_H \div (\rr_q-v_i\otimes w_i-w_i\otimes u_q )(s)\|_{H^3_x}\d s \notag\\
 & \leq C_* \int_{0}^{t} \|\abs{\nabla}\rr_q\|_{H^3_x}+(t-s)^{-\frac{1}{2\a}}(\|v_i(s)\|_{H^3_x}+\|u_q(s)\|_{H^3_x})\|w_i(s)\|_{H^3_x}\d s.
\end{align}

Similarly to  \eqref{e2.14}, we claim that
for all $t\in [0,t_1+\theta_{q+1}]$,
\begin{align}\label{claim-vih3}
\|w_i(t)\|_{H^3_x}\leq 2 C_* \int_{0}^{t} \|\abs{\nabla}\rr_q(s)\|_{H^3_x}ds.
\end{align}

Actually, plugging \eqref{claim-vih3} into \eqref{vih3} we get
\begin{align}
\norm{w_i(t)}_{H^3_x}& \leq 2C_* \int_{0}^{t} \|\abs{\nabla}\rr_q(s)\|_{H^3_x}ds\( \frac{1}{2}
+ \frac{2\a C_*}{2\a-1}t^{1-\frac{1}{2\a}}\left( \norm{v_i}_{L^\infty_{t}H^3_{x}} + \norm{u_q}_{L^\infty_{t}H^3_{x}} \right) \).
\end{align}
In view of \eqref{uh3}, \eqref{def-mq-thetaq} and \eqref{vh3}, one has
\begin{align}\label{e2.23}
      \frac{2\a C_*}{2\a-1}t^{1-\frac{1}{2\a}}\left( \norm{v_i}_{L^\infty_{t}H^3_{x}} + \norm{ u_q}_{L^\infty_{t}H^3_{x}} \right)\notag
\leq &\, 6C_*(t_1+\theta_{q+1})^{\frac12+\frac{2\a-1}{2\a}} \norm{u_q}_{L^\infty_{t}H^3_x} \notag\\
\leq &\, C' (\la^{-12})^{\frac12+\frac{2\a-1}{2\a}} \la^{5}
\leq C' \la^{-1},
\end{align}
where $C'$ is a universal constant.
Thus, we see that the constant in \eqref{claim-vih3} can be improved by $3C_*/2$,
by assuming $a$ sufficiently large such that $C'\la^{-1}<1/4$,
which, via the bootstrap argument yields \eqref{claim-vih3}
for all $t\in [0,t_1+\theta_{q+1}]$, as claimed.
Thus, estimate \eqref{est-vih3} follows.

It remains to prove \eqref{est-rvi}.
Let
\begin{align}  \label{z-wi}
  z: = \Delta^{-1}\curl w_i.
\end{align}
Note that $\curl z=-w_i$,
as $w_i$ is divergence free.
By the boundedness of Calder\'{o}n-Zygmund operators in $L^\rho$ spaces for $1<\rho<\9$,
it holds that for any $t\in [t_i,t_{i+1}]$,
$$ \|\mathcal{R}w_i(t)\|_{L^\rho_x}\leq C_1 \|z(t)\|_{L^\rho_x}, $$
where $C_1$ is a universal constant. Moreover, $z$ satisfies the equation (see e.g. \cite{bcv21,I18} for more details),
\begin{align}
\partial_t z + v_i \cdot \nabla z  +(- \Delta)^\alpha  z
&= \Delta^{-1} \curl \div \rr_q +  \Delta^{-1} \curl \div \left(  (z\times \nabla) v_i  \right) \notag\\
&\quad +  \Delta^{-1} \nabla \div  \left( ( z \cdot \nabla) v_i\right)    + \Delta^{-1} \curl \div \left( ((z\times \nabla) u_q)^T \right),   \label{equa-z}
\end{align}
which can be reformulated as follows
\begin{align}
z(t)
&= \int_0^t e^{-(t-s)(-\Delta)^\alpha} \left( \Delta^{-1} \curl \div \rr_q  +   \Delta^{-1} \curl \div \left( ((z\times \nabla) u_q)^T \right)  - \div (v_i \otimes z)\right)(s) \d s \notag\\
&\qquad + \int_0^t e^{-(t-s)(-\Delta)^\alpha} \left(   \Delta^{-1} \curl \div \left( (z\times \nabla) v_i  \right) + \Delta^{-1} \nabla \div  \left( ( z \cdot \nabla) v_i\right)   \right)(s) \d s.
\label{euqa-zinte}
\end{align}
Then, by virtue of the boundedness of Calder\'{o}n-Zygmund operators in $L^\rho$, $\rho\in(1,2)$,
and the classical semigroup estimates,
we derive
\begin{align}
\norm{z(t)}_{L^\rho_x} &\leq C_* \bigg(\int_0^t \norm{\rr_q(s)}_{L^\rho_x}\d s  + \left(  \norm{\nabla v_i }_{L^\infty_{t,x}}
+   \norm{\nabla u_q}_{L^\infty_{t,x}} \right) \int_0^t \norm{z(s)}_{L^\rho_x} \d s\notag\\
&\qquad\quad + \norm{v_i}_{L^\infty_{t,x}} \int_0^t (t-s)^{-\frac{1}{2\alpha}}\norm{z(s)}_{L^\rho_x} \d s\bigg),
\label{e2.20}
\end{align}
where $C_*$ is a universal constant depending only on $\rho$ and $\a$.

As in the case of \eqref{e2.14},
we claim that for any $t\in [0,t_1+\theta_{q+1}]$,
\begin{align}
&\|z(t)\|_{L^\rho_x}\leq 2C_*\int_{0}^{t}\| \rr_q(s)\|_{L^\rho_x} \d s. \label{e2.21}
\end{align}
For this purpose, inserting \eqref{e2.21} into \eqref{e2.20} we get
\begin{align}
\norm{z(t)}_{L^\rho_x}& \leq 2 C_* \int_{0}^{t} \| \rr_q(s)\|_{L^\rho_x}\d s\( \frac{1}{2}
+ C_*t\left( \norm{\nabla v_i}_{L^\infty_{t,x}} + \norm{\nabla u_q}_{L^\infty_{t,x}} \right)+ \frac{2\a C_*}{2\a-1}t^{1-\frac{1}{2\a}}\norm{v_i}_{L^\infty_{t,x}}\) .
\label{e2.22}
\end{align}
Then, by \eqref{uh3}, \eqref{def-mq-thetaq}, \eqref{vh3} and the Sobolev embedding,
\begin{align}\label{e2.23}
&C_*t\left( \norm{\nabla v_i}_{L^\infty_{t,x}}
+ \norm{\nabla u_q}_{L^\infty_{t,x}} \right)
+ \frac{2\a C_*}{2\a-1}t^{1-\frac{1}{2\a}}\norm{v_i}_{L^\infty_{t,x}}\notag\\
\leq &\, CC_*\((t_1+\theta_{q+1})\left( \norm{v_i}_{L^\infty_{t}H^3_x}
    + \norm{u_q}_{L^\infty_{t}H^3_x} \right)
	+ 2(t_1+\theta_{q+1})^{1-\frac{1}{2\a}}\norm{v_i}_{L^\infty_{t}H^3_x}\)\notag\\
\leq &\, CC_*(3(t_1+\theta_{q+1})+4(t_1+\theta_{q+1})^{1-\frac{1}{2\a}})\norm{u_q}_{L^\infty_{t}H^3_x}\notag\\
 \leq &\, C'\(\la^{-12}+(\la^{-12})^{\frac12+\frac{2\a-1}{2\a}}\) \la^{5}
 \leq 2C'\la^{-1},
\end{align}
which along with \eqref{e2.22} yields that
for $a$ sufficiently large such that $2C'\la^{-1}<1/4$,
the constant in \eqref{e2.21} can be replaced by $ 3C_*/2$.
This yields \eqref{e2.21} for all $t\in [0,t_1+\theta_{q+1}]$, as claimed.
Thus, in view of \eqref{z-wi},
we prove \eqref{est-rvi}.
Therefore, the proof is complete.
\end{proof}

\subsection{Temporal gluing of local solutions}
From the previous section,
we see that $v_i$ is exactly the solution to the hyperdissipative Navier-Stokes equations \eqref{equa-NS}
on each subinterval $[t_i,t_{i+1}+\theta_{q+1}]$, $0\leq i\leq \mq-1$.
Then, we glue the local solutions together
in an appropriate way such that the glued solution $\sum_i \chi_i v_i$ is the exact solution to \eqref{equa-NS}
in a majority part of the time interval $[0,T]$,
while the Reynolds stress error has smaller disjoint supports in time.

More precisely, we let $\{\chi_i\}_{i=0}^{\mq-1}$ be a $C_0^\9$ partition of unity on $[0,T]$  such that
$$
0\leq  \chi_i(t) \leq 1, \quad\text{for} \quad t\in[0,T],
$$
for $0<i< \mq-1$,
\begin{align}\label{def-chi1}
\chi_{i}= \begin{cases}1 & \text { if } t_{i}+\thq \leq t \leq t_{i+1}, \\
 0 & \text { if } t\leq t_{i}, \text { or } t \geq t_{i+1}+\thq,\end{cases}
\end{align}
and for $i=0$,
\begin{align}\label{def-chi2}
\chi_{i}= \begin{cases}1 & \text { if } 0 \leq t \leq t_{i+1},
\\ 0 & \text { if } t \geq t_{i+1}+\thq,\end{cases}
\end{align}
and for $i= \mq-1$,
\begin{align}\label{def-chi3}
\chi_{i}= \begin{cases}1 & \text { if } t_{i}+\thq \leq t \leq T, \\
  0 & \text { if } t \leq t_{i}.\end{cases}
\end{align}
Furthermore, we assume that $\chi_i$, $0\leq i\leq \mq-1$,
satisfy the following bounds,
\begin{align}\label{est-chi}
\|\partial_t^M \chi_i\|_{L^{\9}_t}\lesssim \thq^{-M},
\end{align}
where the implicit constant is independent of $\thq$ and $i, M\geq 0$.

Now, let
\begin{align}\label{def-wtu}
\wt u_q:=\sum_{i=0}^{\mq-1} \chi_i v_i.
\end{align}
Note that, $\wt u_q: [0,T]\times \T^3\rightarrow \R^3$ is divergence and mean free.
Moreover, for $t\in [t_i, t_{i+1}]$, we have
\[
\wt u_q=(1-\chi_i)v_{i-1}+\chi_i v_i,
\]
and $\wt u_q$ satisfies
\begin{align}\label{equa-wtu}
\partial_{t} \wt u_q+ \nu (-\Delta)^\a \wt u_q +\operatorname{div}(\wt u_q \otimes \wt u_q) +\nabla \wt p=\div \tr_q,
\end{align}
where
the new Reynolds stress is of form
\begin{align}\label{def-nr}
\tr_q =\partial_t\chi_i\mathcal{R}(v_{i}-v_{i-1}) -\chi_{i}(1-\chi_{i})((v_{i}-v_{i-1})\mathring\otimes (v_{i}-v_{i-1})),
\end{align}
and the pressure $\wt p:[0,1] \times \mathbb{T}^{3} \rightarrow \mathbb{R}$ is given by
\begin{align}\label{def-wp}
\wt p= \chi_{i} p_{i}+(1-\chi)p_{i-1}-\chi_i(1-\chi_i)\left(|v_{i}-v_{i-1}|^2-\int_{\T^3}|v_{i}-v_{i-1}|^2\d x\right).
\end{align}

\subsection{Proof of Proposition~\ref{prop-nunr}}
Using the definition of $\chi_i$ and $v_i$, \eqref{uh3} and \eqref{vh3} we get
\begin{align}\label{ver-nuh3}
\|\wt u_q\|_{L^\9_tH^3_x}& \leq \|\sum_i \chi_i v_i\|_{L^\9_tH^3_x}\notag\\
&\leq \sup_i \(\|(1-\chi_i)v_{i-1}\|_{L^\9_t(\supp(\chi_i\chi_{i-1});H^3_x)}+\|\chi_i v_i\|_{L^\9_t(\supp(\chi_i);H^3_x)}\)\notag\\
&\leq 4\|u_q\|_{L^\9_tH^3_x}\lesssim \la^{5},
\end{align}
which yields \eqref{nuh3}.

Regarding the estimate \eqref{uuql2},
using \eqref{rh3}, \eqref{def-mq-thetaq} and \eqref{est-vilp}
we get
\begin{align}\label{ne2.42}
\|\wt u_q-u_q\|_{L^\9_tL^2_{x}}& \leq \|\sum_i \chi_i(v_i-u_q)\|_{L^\9_tL^2_{x}} \notag\\
&\leq \sup_i\(\|v_i-u_q\|_{L^\9_t(\supp(\chi_i);L^2_x)}+\|v_{i-1}-u_q\|_{L^\9_t(\supp(\chi_i\chi_{i-1});L^2_x)}\)\notag\\
& \lesssim \sup_i|\supp(\chi_i)|\| \abs{\nabla} \rr_q\|_{L^\9_tL^2_x}\notag\\
&\lesssim \mq^{-1}\|\rr_q\|_{L^\9_tH^3_x}\lesssim \mq^{-1}\la^9 \lesssim \la^{-3},
\end{align}
Hence, estimate \eqref{uuql2} is verified.

Concerning the $L^1$-estimate of the new Reynolds stress, by \eqref{def-nr},
\begin{align}
\|\tr_q\|_{L_{t,x}^{1}} & \leq\|\partial_t\chi_i\mathcal{R}(v_{i}-v_{i-1})\|_{L_{t,x}^1 }+\|\chi_{i}(1-\chi_{i})((v_{i}-v_{i-1})\mathring\otimes (v_{i}-v_{i-1}))\|_{L_{t,x}^{1}} \notag\\
&=:J_1+J_2.\label{e2.33}
\end{align}
Regarding the estimate of $J_1$, we choose
\begin{align*}
1<\rho<\frac{4\ve_R+36+8\beta b}{\ve_R+36+8\beta b},
\end{align*}
where $b,\beta$ are given by \eqref{b-beta-ve} and
$\ve_R$ is given by \eqref{rl1},
to get
\begin{align}
 J_1 \leq&  \sum_i \|\p_t \chi_i\|_{L^1_t}\|\mathcal{R}(v_i-v_{i-1})\|_{L^\9_tL^\rho_x}  \notag \\
 \lesssim& \sum_i \|\mathcal{R}(w_i-w_{i-1})\|_{L^\9_t(\supp(\chi_i\chi_{i-1});L^\rho_x)},
\end{align}
which along with the Gagliardo-Nirenberg inequality,
\eqref{rh3}, \eqref{rl1} and \eqref{est-rvi} yields that
\begin{align}   \label{est-j1}
    J_1 \lesssim& \sum_i\int_{t_i}^{t_{i+1}} \|\rr_q(s)\|_{L^\rho_x}\d s \notag\\
	\lesssim&  \|\rr_q\|_{L^1_{t,x}}^{1-\frac{2(\rho-1)}{3\rho}}\|\rr_q\|_{L^\9_tH^3_x}^{\frac{2(\rho-1)}{3\rho}} \notag \\
    \lesssim& \la^{-\ve_R+(\ve_R+9)\frac{2(\rho-1)}{3\rho}}\delta_{q+1}^{1-\frac{2(\rho-1)}{3\rho}} \leq \la^{-\frac{3\ve_R}{8}}\delta_{q+1},
\end{align}
where in the last step we chose $a$ sufficiently large
and used $\la^{-\ve_R/8}$ to absorb the implicit constant.

Concerning the estimate of $J_2$, using \eqref{est-vilp} we obtain
\begin{align}
  J_2 & \leq \sum_i \|\chi_{i}-\chi_{i}^{2} \|_{L^1_t} \|  ((v_{i}-v_{i-1})\mathring\otimes (v_{i}-v_{i-1}))\|_{L^\9_tL^1_x} \notag \\
  &\lesssim \sum_i |\supp_{t} \chi_i(1-\chi_i)|\|\chi_{i}-\chi_{i}^{2} \|_{L^\9_t} \|  w_{i}-w_{i-1}\|_{L^\9_t(\supp(\chi_i\chi_{i-1});L^2_x)}^2 \notag \\
  & \lesssim \sum_i \thq \(\int_{t_i}^{t_{i+1}}\||\nabla|\rr_q(s)\|_{L^2_x} \d s\)^2  \notag\\
  &\lesssim \thq \||\nabla|\rr_q\|_{L^1_tL^2_x} ^2,
\end{align}
then using \eqref{rh3}, \eqref{rl1} and the fact that $\la^{-1}\ll \delta_{q+1}^{1/9}$ and $0<\eta<1$ we lead to
\begin{align}  \label{est-j2}
	J_2 \lesssim \mq^{-\frac{1}{\eta}} \|\rr_q\|_{L^1_{t,x}}^{\frac{8}{9}}\|\rr_q\|_{L^1_tH^3_x}^{\frac{10}{9}}
  \lesssim \la^{-12/\eta}\la^{-\frac89\ve_R}\delta_{q+1}^{\frac89} \la^{10}\leq \la^{-\frac12\ve_R}\delta_{q+1},
\end{align}
where we also chose $a$ sufficient large and used $\la^{-\ve_R/4}$ to absorb the implicit constant.

Thus, we conclude from \eqref{est-j1}-\eqref{est-j2} that
\begin{align}\label{est-trq}
\|\tr_q\|_{L^1_{t,x}}\leq \la^{-\frac{\ve_R}{4}}\delta_{q+1},
\end{align}
and so \eqref{nrl1} is verified.

Regarding the estimate \eqref{nrh3},
by \eqref{def-nr}, for $t\in[t_i,t_{i+1}]$, $0\leq i\leq \mq-1$,
\begin{align}\label{e2.45}
\|\partial_t^M \nabla^N \tr_q\|_{L^\infty_tH^3_x} & \leq\| \partial_t^M \nabla^N(\partial_t\chi_i\mathcal{R}(v_{i}-v_{i-1}))\|_{L^\infty_tH^3_x }\notag\\
&\quad +\|\partial_t^M \nabla^N(\chi_{i}(1-\chi_{i})((v_{i}-v_{i-1})\mathring\otimes (v_{i}-v_{i-1})))\|_{L^\infty_tH^3_x}.
\end{align}
For the first term on the right-hand-side of \eqref{e2.45},
by \eqref{uh3}, \eqref{vh3}  and the fact that $\thq^{-1} \geq \mq$,
\begin{align}\label{e2.46}
&\norm{\partial_t^M \nabla^N (\partial_t \chi_i \mathcal{R}(v_i-v_{i-1})) }_{L^\infty_tH^3_x} \notag\\
\lesssim &\sum_{M_1+M_2=M}\norm{\partial_t^{M_1+1} \chi_i}_{L^\infty_t} \left(\norm{\partial_t^{M_2} \nabla^N v_i}_{L^\infty_t(\supp(\chi_i);H^3_x)} + \norm{\partial_t^{M_2} \nabla^N v_{i-1}}_{L^\infty_t(\supp(\chi_{i-1});H^3_x)}\right) \notag\\
\lesssim& \sum_{M_1+M_2=M} \thq^{-M_1-1}\mq^{\frac{N}{2\alpha} + M_2} \lambda_q^{5} \lesssim \thq^{-M-1} \mq^{\frac{N}{2\alpha} } \lambda_q^{5} \,,
\end{align}
where the implicit constants are independent of $i$ and $q$.
In order to control the second term of \eqref{e2.45}, using \eqref{uh3}, \eqref{vh3}, \eqref{pdvh3}
and the fact that $\thq^{-1} \geq \mq \geq \lambda_q^{-5}$ we obtain
\begin{align}\label{e2.47}
&\norm{\partial_t^M \nabla^N ( \chi_i (1-\chi_i) (v_i-v_{i-1}) \mathring\otimes(v_i-v_{i-1}))}_{L^\infty_tH^3_x} \notag\\
\lesssim & \sum_{M_1+M_2=M}  \norm{\partial_t^{M_1}( \chi_i (1-\chi_i)}_{L^\9_t}\norm{\partial_t^{M_2} \nabla^N(v_i-v_{i-1}) \mathring\otimes(v_i-v_{i-1}))}_{L^\infty_t(\supp(\chi_i\chi_{i-1});H^3_x)} \notag\\
\lesssim & \sum_{M_1+M_2=M}  \thq^{-M_1} \norm{\partial_t^{M_2} \nabla^N ( (v_i-v_{i-1}) \otimes(v_i-v_{i-1}))}_{L^\infty_t(\supp(\chi_i\chi_{i-1});H^3_x)} \notag\\
 \lesssim & \sum_{M_1+M_2=M}  \thq^{-M_1}\mq^{\frac{N}{2\alpha}+M_2 } \lambda_q^{10}
 \lesssim \thq^{-M-1} \mq^{\frac{N}{2\alpha} }\lambda_q^{5} ,
\end{align}
where the implicit constants are independent of $i$ and $q$.
Combing \eqref{e2.46} and \eqref{e2.47} we prove \eqref{nrh3}.

It remains to prove \eqref{suppnr} and the well-preparedness of the
$(\wt u_q,\tr_q)$.
By the choice of $m_q$ and $\theta_q$
in \eqref{def-mq-thetaq},
\begin{align}\label{thetaq1}
\thq= \lambda_{q}^{-\frac{12}{\eta}}\ll \frac12\lambda_{q-1}^{-\frac{12}{\eta}}=\frac12\theta_q.
\end{align}
Since $(u_q,\rr_q)$ is a well-prepared solution to \eqref{equa-nsr}
for the set $I_{q}$ and the length scale $\theta_q$,
we note that $u_q$ is an exact solution to \eqref{equa-NS} on the $\theta_{q}$-neighborhood of $I^c_q$.
In particular, if $t_{i-1}$ and $t_{i}$ both lie in this region,
we have that $\wt u_q =v_{i-1}=v_{i}=u_q$ on the overlapped region $\supp \chi_{i-1}\chi_{i}$,
thus $\wt u_q$ is an exact solution there.

Based on the above argument, we define the index set $\mathcal{C}$ by
\begin{align}\label{def-indexsetb}
\mathcal{C}:=\left\{ i\in \mathbb{Z}: 1\leq i\leq m_{q+1}-1\  \text{and}\  \rr_q\not\equiv 0\ \text{on}\ [t_{i-1},t_{i}+\thq]\cap [0,T] \right\},
\end{align}
and choose $I_{q+1}$ in the way
\begin{align}   \label{Iq1-C-def}
  I_{q+1} := \bigcup_{i\in \mathcal{C}} \left[t_i-2\thq,t_i+3\thq \right].
\end{align}

We claim that for any $q\geq 0$,
\begin{align}\label{iq1}
I_{q+1}\subseteq I_q.
\end{align}

To this end, it is equivalent to show that for any $t_*\in I_{q}^c$,
it holds that $t_*\in I_{q+1}^c$.
We argue by contradiction.
Suppose that $t_*\in I_{q+1}$, then
\begin{align}\label{e3.56}
   \rr_q \not\equiv 0\quad \text{on}\quad [t_*-2(T/\mq),t_*+2(T/\mq)+\thq].
\end{align}
Since $t_*\in I_{q}^c$ and $2T/\mq+\thq \ll \theta_q/2$,
we infer that
\begin{align*}
  \dist (t,I_{q}^c)\leq \frac{\theta_q}{2}, \ \
  \forall t\in [t_*-2(T/\mq),t_*+2(T/\mq)+\thq],
\end{align*}
which along with the well-preparedness of $(u_q,\rr_q)$ yields that
\begin{align*}
  \rr_q(t)=0\quad \text{for all}\quad  t\in [t_*-2(T/\mq),t_*+2(T/\mq)+\thq].
\end{align*}
This leads to the contradiction with \eqref{e3.56}.
Thus, \eqref{iq1} is proved.

We next prove \eqref{suppnr}, i.e.,
\begin{align*}
	\tr_q(t,x)=0 \quad \text{if} \quad \operatorname{dist}(t,I_{q+1}^c)\leq {3}\thq/2,
\end{align*}

For this purpose,
we take any $t\in [0,T]$ such that $\dist(t, I_{q+1}^c)\leq {3}\thq/2$.

If $t\in I_{q+1}$,
by \eqref{Iq1-C-def},
$t\in [t_i-2\thq,t_i+3\thq]$ for some $i\in \mathcal{C}$.
Then, we have
\begin{align}\label{e3.57}
t\in [t_i-2\thq,t_i-\frac{\thq}{2}]\quad \text{or}\quad t\in [t_i+\frac{3\thq}{2},t_i+3\thq].
\end{align}
For $t\in [t_i-2\thq,t_i-\thq/2]$,
since  $\chi_{i-1}(t) =1$ and $\partial_t \chi_{i-1}(t)=0$,
we infer from \eqref{def-nr} with $i-1$ replacing $i$ that
$\tr_q(t) = 0$.
Similarly arguments also apply to the case where $t\in [t_i+3\thq/2,t_i+3\thq]$.

If $t\in I_{q+1}^c$, then there exists $0\leq j\leq \mq-1$, such that $t\in [t_j,t_{j+1}]$.
If $t\in [t_j+\thq,t_{j+1}]$,
since $\chi_{j}(t) =0$, $\partial_t\chi_j(t) = 0$,
we still have from \eqref{def-nr} with $j$ replacing $i$ that
$\tr_q(t) =0$.
For $t\in [t_j,t_{j}+\thq]$,
we have that $j\notin \mathcal{C}$,
otherwise $t\in [t_j-2\thq,t_j+3\thq]\subseteq I_{q+1}$.
Hence, it follows from \eqref{def-indexsetb} that
\begin{align*}
\rr_q\equiv0 \quad \text{on}\quad [t_{j-1},t_j+\thq].
\end{align*}
This means that $u_q$ solves solution to \eqref{equa-NS} on $[t_{j-1},t_j+\thq]$.
Thus, by the uniqueness in Proposition  \ref{Prop-LWP-Hyper-NLSE},
\begin{align*}
v_{j-1}=v_j=u_q\ \ on\ [t_j, t_j+\theta_{q+1}].
\end{align*}
Plugging this into \eqref{def-nr} with $j$ replacing $i$
we obtain $\tr_q(t) =0 $
and thus finish the proof of \eqref{suppnr}.

Therefore, the proof of Proposition~\ref{prop-nunr} is complete.
\hfill $\square$

\section{Velocity perturbations in the supercritical regime $\mathcal{A}_1$}  \label{Sec-Flow-Endpt1}

In this section, we mainly construct the crucial velocity perturbations
whose borderline in particular includes the endpoint case
where $(s,\gamma, p)=(3/p+1-2\alpha, \infty, p)$,
$\alpha \in [5/4, 2)$.

The fundamental building blocks are indexed by six parameters
$\rs$, $\rp$, $\lambda$, $\mu$, $\tau$ and $\sigma$,
chosen in the following way
\begin{equation}\label{larsrp}
	\rs := \lambda_{q+1}^{-1+2\varepsilon},\ \rp := \lambda_{q+1}^{-1+4\varepsilon},\
	 \lambda := \lambda_{q+1},\ \mu:=\lambda_{q+1}^{2\a-1+2\varepsilon}, \
      \tau:=\lambda_{q+1}^{4\a-5+11\varepsilon}, \ \sigma:=\lambda_{q+1}^{2\varepsilon},
\end{equation}
where $\varepsilon$ is a sufficiently small constant satisfying \eqref{e3.1}.

We note that,
the parameters are chosen in this way,
in order to control both the strong dissipativity
and oscillation errors as shown in the constrains \eqref{setpw}-\eqref{setrosc2}.

\subsection{Spatial building blocks.}
We use the intermittent jets, first introduced in \cite{bcv21},
as the basic spatial building blocks.
More precisely,
we let $\Phi : \mathbb{R}^2 \to \mathbb{R}$ be a smooth cut-off function supported on a ball of radius $1$
and normalize $\Phi$ such that $\phi := - \Delta\Phi$ satisfies
\begin{equation}\label{e4.91}
	\frac{1}{4 \pi^2}\int_{\mathbb{R}^2} \phi^2(x)\d x = 1.
\end{equation}
Moreover, let $\psi: \mathbb{R} \rightarrow \mathbb{R}$ be a smooth and mean-zero function, satisfying
\begin{equation}\label{e4.92}
	\frac{1}{2 \pi} \int_{\mathbb{R}} \psi^{2}\left(x\right) \d x=1, \quad \supp\psi\subseteq [-1,1].
\end{equation}

The corresponding rescaled cut-off functions are defined by
\begin{equation*}
	\phi_{\rs}(x) := {\rs^{-1}}\phi\left(\frac{x}{\rs}\right), \quad
	\Phi_{\rs}(x):=   {\rs^{-1}} \Phi\left(\frac{x}{\rs}\right),\quad
	\psi_{\rp}\left(x\right) := {r_{\|}^{- \frac 12}} \psi\left(\frac{x}{r_{\|}}\right).
\end{equation*}
With this scaling, $\phi_{\rs}$ is supported in the ball of radius $\rs$ in $\R^2$ and $\psi_{\rp}$ is supported in the ball of radius $\rp$ in $\bbr$. By an abuse of notation,
we periodize $\phi_{\rs}$, $\Phi_{\rs}$ and $\psi_{\rp}$ so that
$\phi_{\rs}$, $\Phi_{\rs}$ are treated as periodic functions defined on $\mathbb{T}^2$ and $\psi_{\rp}$ is treated as a periodic function defined on $\mathbb{T}$.

\vspace{1ex}

Let  $\Lambda \subset \mathbb{S}^2 \cap \mathbb{Q}^3$ be the wavevector set as in the Geometrical Lemma ~\ref{geometric lem 2},
and let $(k,k_1,k_2)$ be the orthonormal bases for every $k\in \Lambda$.

The \textit{intermittent jets} are defined by
\begin{equation*}
	W_{(k)} :=  \psi_{\rp}(\lambda \rs N_{\Lambda}(k_1\cdot x+\mu t))\phi_{\rs}( \lambda \rs N_{\Lambda}k\cdot (x-\alpha_k),\lambda \rs N_{\Lambda}k_2\cdot (x-\alpha_k))k_1,\ \  k \in \Lambda.
\end{equation*}

Here, $N_{\Lambda}$ is given by \eqref{NLambda}.
The parameters $\rp$ and $\rs$ measure the concentration effect of the intermittent jets,
and $\mu$ is the temporal oscillation parameter.
The shifts $\a_k\in \R^3$ are chosen suitably  such that $W_{(k)}$ and $W_{(k')}$ have disjoint supports if $k\neq k'$.
The existence of such $\a_k$ can be guaranteed by taking $\rs$ sufficiently small
(see, e.g., \cite{bv19r}).

For brevity, we set
\begin{equation}\label{snp}
	\begin{array}{ll}
		&\psi_{(k_1)}(x) :=\psi_{\rp}(\lambda \rs N_{\Lambda}(k_1\cdot x+\mu t)), \\
		&\phi_{(k)}(x) := \phi_{\rs}( \lambda \rs N_{\Lambda}k\cdot (x-\a_k),\lambda \rs N_{\Lambda}k_2\cdot (x-\a_k)), \\
		&\Phi_{(k)}(x) := \Phi_{\rs}( \lambda \rs N_{\Lambda}k\cdot (x-\a_k),\lambda \rs N_{\Lambda}k_2\cdot (x-\a_k)),
		\end{array}
\end{equation}
and thus
\begin{equation}\label{snwd}
	W_{(k)} = \psi_{(k_1)}\phi_{(k)} k_1,\quad  k\in \Lambda.
\end{equation}

Because $W_{(k)}$ is not divergence-free, we also need the corrector
\begin{equation}
	\begin{aligned}
		\label{corrector vector}
		\wt W_{(k)}^c := \frac{1}{\lambda^2N_{ \Lambda }^2} \nabla\psi_{(k_1)}\times\curl(\Phi_{(k)} k_1)
	\end{aligned}
\end{equation}
and let
\begin{align} \label{Vk-def}
	W^c_{(k)} := \frac{1}{\lambda^2N_{\Lambda}^2 } \psi_{(k_1)}\Phi_{(k)} k_1.
\end{align}
Then, by straightforward computations,
\begin{equation}\label{wcwc}
	  W_{(k)} + \wt W_{(k)}^c
	=\curl \curl \left(\frac{1}{\lambda^2N_{\Lambda}^2 } \psi_{(k_1)}\Phi_{(k)} k_1\right)
	=\curl \curl W^c_{(k)},
\end{equation}
which yields that
\begin{align} \label{div-Wck-Wk-0}
	\div (W_{(k)}+ \wt W^c_{(k)}) =0.
\end{align}

Lemma \ref{buildingblockestlemma} below contains the key estimates of the intermittent jets.

\begin{lemma} [Estimates of intermittent jets, \cite{bv19r}] \label{buildingblockestlemma}
	For $p \in [1,\infty]$, $N,\,M \in \mathbb{N}$, we have
	\begin{align}
		&\left\|\nabla^{N} \partial_{t}^{M} \psi_{(k_1)}\right\|_{C_t L^{p}_{x}}
		\lesssim r_{\|}^{\frac 1p- \frac 12}\left(\frac{r_{\perp} \lambda}{r_{\|}}\right)^{N}
          \left(\frac{r_{\perp} \lambda \mu}{r_{\|}}\right)^{M}, \label{intermittent estimates} \\
		&\left\|\nabla^{N} \phi_{(k)}\right\|_{L^{p}_{x}}+\left\|\nabla^{N} \Phi_{(k)}\right\|_{L^{p}_{x}}
		\lesssim r_{\perp}^{\frac 2p- 1}  \lambda^{N}, \label{intermittent estimates2}
	\end{align}
	where the implicit constants are independent of $\rs,\,\rp,\,\lambda$ and $\mu$. Moreover, it holds that
	\begin{align}
		&\displaystyle\left\|\nabla^{N} \partial_{t}^{M} W_{(k)}\right\|_{C_t  L^{p}_{x}}
          +\frac{r_{\|}}{r_{\perp}}\left\|\nabla^{N} \partial_{t}^{M} \wt W_{(k)}^{c}\right\|_{C_t L^{p}_{x}}
          +\lambda^{2}\left\|\nabla^{N} \partial_{t}^{M} W_{(k)}^c\right\|_{C_t L^{p}_{x}}\displaystyle \nonumber \\
		&\qquad \lesssim r_{\perp}^{\frac 2p- 1} r_{\|}^{\frac 1p- \frac 12} \lambda^{N}
          \left(\frac{r_{\perp} \lambda \mu}{r_{\|}}\right)^{M}, \ \ k\in \Lambda, \label{ew}
	\end{align}
where the implicit constants are independent of $\rs,\,\rp,\,\lambda$ and $\mu$.
\end{lemma}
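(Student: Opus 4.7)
The estimates are pure scaling/chain-rule calculations for functions of the form ``rescaled profile evaluated on a linear combination of coordinates'', combined with Fubini in the orthonormal frame $(k,k_1,k_2)$. I would proceed in three steps.

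\emph{Step 1: $L^p$ bounds on the one- and two-dimensional profiles.} From the definitions
$\psi_{\rp}(x)=\rp^{-1/2}\psi(x/\rp)$,
$\phi_{\rs}(x)=\rs^{-1}\phi(x/\rs)$,
$\Phi_{\rs}(x)=\rs^{-1}\Phi(x/\rs)$, one reads off directly $\|\psi_{\rp}^{(n)}\|_{L^p(\mathbb{T})}\lesssim \rp^{1/p-1/2-n}$ and $\|\nabla^n\phi_{\rs}\|_{L^p(\mathbb{T}^2)}+\|\nabla^n\Phi_{\rs}\|_{L^p(\mathbb{T}^2)}\lesssim \rs^{2/p-1-n}$, provided the support of the unperiodized profile sits in one fundamental cell (guaranteed by taking $a$ large, i.e., $\rs,\rp\ll 1$). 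The key observation is that replacing the argument $y$ by $cy$ with $c\in\mathbb{N}$ leaves the periodic $L^p$-norm unchanged, since the rescaled function has $c$ disjoint copies on each period of length $1/c$. Hence, passing to $\psi_{(k_1)}$ and $\phi_{(k)}$ gains, via the chain rule, a factor $\lambda N_\Lambda$ per spatial derivative of $\phi_{(k)}$, and a factor $\lambda \rs N_\Lambda$ per spatial derivative and $\lambda\rs N_\Lambda\mu$ per time derivative of $\psi_{(k_1)}$; combining with the $\rp^{-1}$ (resp.\ $\rs^{-1}$) loss per differentiation of the underlying profile yields \eqref{intermittent estimates} and \eqref{intermittent estimates2} (absorbing the fixed $N_\Lambda$ into implicit constants).

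\emph{Step 2: the product estimate for $W_{(k)}$.} Since $\{k,k_1,k_2\}$ is an orthonormal rational basis, the map $x\mapsto (k\cdot x,k_1\cdot x,k_2\cdot x)$ is a volume-preserving automorphism of $\mathbb{T}^3$ (after suitable integer rescaling which is again absorbed), so Fubini gives
\[
\|\psi_{(k_1)}\phi_{(k)}\|_{L^p_x}=\|\psi_{(k_1)}\|_{L^p(\mathbb{T})}\,\|\phi_{(k)}\|_{L^p(\mathbb{T}^2)}.
\]
Combining with Step 1 and expanding $\nabla^N\partial_t^M(\psi_{(k_1)}\phi_{(k)}k_1)$ by Leibniz, each distribution of $N$ spatial derivatives between the two factors contributes a factor $(\lambda\rs N_\Lambda/\rp)^{a}\lambda^{N-a}$ with $0\le a\le N$; since $\rs\le\rp$, the maximum is attained at $a=0$, giving the stated spatial frequency $\lambda^N$. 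Time derivatives only hit $\psi_{(k_1)}$ and produce the factor $(\lambda\rs\mu/\rp)^M$. This yields the claimed bound for $W_{(k)}$ in \eqref{ew}.

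\emph{Step 3: the correctors.} For $W^c_{(k)}=\lambda^{-2}N_\Lambda^{-2}\psi_{(k_1)}\Phi_{(k)}k_1$, the bound follows immediately from Step 2 with $\Phi$ in place of $\phi$; the profile $\Phi_{\rs}$ satisfies the same $L^p$ scaling as $\phi_{\rs}$ (only the derivative count that trades $\phi=-\Delta\Phi$ matters, and we are not differentiating twice here). For $\wt W^c_{(k)}=\lambda^{-2}N_\Lambda^{-2}\nabla\psi_{(k_1)}\times\curl(\Phi_{(k)}k_1)$, expanding $\curl(\Phi_{(k)}k_1)=\nabla\Phi_{(k)}\times k_1$ reveals one spatial derivative landing on $\psi_{(k_1)}$ (gain $\lambda\rs/\rp$) and one on $\Phi_{(k)}$ (gain $\lambda$), which combined with the prefactor $\lambda^{-2}$ gives the extra $\rs/\rp$ relative to $W_{(k)}$, as claimed.

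\emph{Anticipated obstacle.} The routine part is bookkeeping of derivatives; the only subtle point is that all periodizing change-of-variables arguments used in Steps 1--2 rely on $\lambda\rs N_\Lambda$ being a positive integer and on $\rs,\rp$ being small enough that the profiles do not overlap themselves on the torus. Both conditions are standard in this framework, guaranteed by the parameter choice \eqref{larsrp} and the requirement $a\in\mathbb{N}$ large; once these are ensured, the three steps above combine to give \eqref{intermittent estimates}--\eqref{ew} with constants independent of $\rs,\rp,\lambda,\mu$.
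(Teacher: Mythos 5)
Your proposal is correct and reproduces the standard argument for this lemma. The paper itself does not prove it — it simply cites Buckmaster and Vicol's survey \cite{bv19r} — so your Steps 1--3 (scaling of the 1D/2D profiles, Fubini in the rational orthonormal frame, Leibniz bookkeeping with the observation that $\rs\le\rp$ forces the worst derivative distribution onto $\phi_{(k)}$, and the corrector computations including the $\rs/\rp$ gain for $\wt W^c_{(k)}$) are essentially the proof one finds there. One small prose slip in Step 1: the chain-rule factor per spatial derivative of $\phi_{(k)}$ is $\lambda\rs N_\Lambda$, not $\lambda N_\Lambda$, and it is only after combining with the $\rs^{-1}$ loss from differentiating $\phi_{\rs}$ that one gets the net $\lambda N_\Lambda$; as written, the clause ``combining with the $\rp^{-1}$ (resp.\ $\rs^{-1}$) loss'' makes it look as if $\rs^{-1}$ is applied twice to $\phi_{(k)}$, which would give the wrong exponent $(\lambda/\rs)^N$. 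But the expression you actually use in Step 2, namely $\lambda^{N-a}$ for the $\phi_{(k)}$-derivatives, is the correct one, so the final bounds \eqref{intermittent estimates}--\eqref{ew} come out right.
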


\subsection{Temporal building blocks.}
As already seen in the previous sections,
the space $C_tL^2_x$ is not a suitable candidate
to construct non-unique solutions for the hyperdissipative NSE \eqref{equa-NS}
when $\alpha \geq 5/4$,
due to the well-posedness results \cite{lions69}.

The keypoint in the current case
is to exploit the temporal intermittency
through suitable temporal building blocks.
These temporal building blocks are indexed by two additional parameters $\tau$ and $\sigma$,
which particularly parameterize the concentration and oscillation in time.
In particular,
in view of the choice \eqref{larsrp},
the strength of temporal oscillation is proportionate to the viscosity of the fluid,
and almost provides the 3D spatial intermittency
when the high dissipativity exponent $\alpha$ is close to $2$.

More precisely, as in \cite{cl20.2},
we choose $g\in C_c^\infty([0,T])$ as a cut-off function such that
\begin{align*}
	\aint_{0}^T g^2(t) \d t=1,
\end{align*}
and then rescale $g$ by
\begin{align}\label{gk1}
	g_\tau(t)=\tau^{\frac 12} g(\tau t),
\end{align}
where the parameter $\tau$ is given by \eqref{larsrp}. By an abuse of notation, we periodic $g_\tau$ such that it is treated as a periodize function defined on $[0,T]$.
Then, let
\begin{align} \label{hk}
	h_\tau(t):= \int_{0}^t \left(g_\tau^2(s)  - 1\right)\ ds,\ \ t\in [0,T],
\end{align}
and set
\begin{align}\label{gk}
	\g:=g_\tau(\sigma t),\ \
    h_{(\tau)}(t):= h_\tau(\sigma t),
\end{align}
when $\sigma$ is as in \eqref{larsrp}.
We note that, $h_{(\tau)}$ satisfies
\begin{align}   \label{pt-h-gt}
\p_t(\sigma^{-1} h_{(\tau)}) = \g^2-1=\g^2-\aint_{0}^T \g^2(t) \d t.
\end{align}
It will be used in the construction of the temporal corrector $\wo$,
in order to balance the high temporal frequency error in \eqref{mag oscillation cancellation calculation}.

We recall from \cite{cl21,lzz21}
the crucial estimates of $\g$ and $h_{(\tau)}$,
which are the contents of Lemma \ref{Lem-gk-esti} below.

\begin{lemma} [Estimates of temporal intermittency]   \label{Lem-gk-esti}
	For  $\gamma \in [1,\infty]$, $M  \in \mathbb{N}$,
	we have
	\begin{align}
		\left\|\partial_{t}^{M}\g \right\|_{L^{\gamma}_t} \lesssim \sigma^{M}\tau^{M+\frac12-\frac{1}{\gamma}},\label{gk estimate}
	\end{align}
	where the implicit constants are independent of $\sigma$ and $\tau$.
	Moreover, we have
	\begin{align}\label{hk-esti}
	\|h_{(\tau)}\|_{C_t}\leq 1.
	\end{align}
\end{lemma}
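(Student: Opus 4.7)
The plan is to reduce both bounds to the base scaling properties of the cut-off function $g$, its rescaled version $g_\tau$, and its further rescaled version $g_{(\tau)}=g_\tau(\sigma\,\cdot)$. The estimates are essentially independent calculations, tied together by the common structural fact that the relevant scaling factors arise from two elementary changes of variables.

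For the first estimate, I would chain the two rescalings using the chain rule. Differentiating $g_{(\tau)}(t)=g_\tau(\sigma t)=\tau^{1/2}g(\tau\sigma t)$ gives
\begin{align*}
    \partial_t^M g_{(\tau)}(t) \,=\, \sigma^M\,g_\tau^{(M)}(\sigma t) \,=\, \sigma^M \tau^{M+1/2}\,g^{(M)}(\tau\sigma t).
\end{align*}
Taking the $L^\gamma$ norm on $[0,T]$ and substituting $u=\tau\sigma t$, together with the $T$-periodicity of the periodized extension of $g$, produces
\begin{align*}
    \|\partial_t^M g_{(\tau)}\|_{L^\gamma_t}^\gamma
    \,=\, \sigma^{M\gamma}\tau^{(M+1/2)\gamma}\cdot\frac{1}{\tau\sigma}\int_0^{\tau\sigma T}|g^{(M)}(u)|^\gamma\,du
    \,\lesssim\, \sigma^{M\gamma}\tau^{(M+\frac12)\gamma-1}\cdot \tau\sigma\,\|g^{(M)}\|_{L^\gamma([0,T])}^\gamma,
\end{align*}
and taking the $1/\gamma$-th power yields the claimed $\sigma^M\tau^{M+1/2-1/\gamma}$ bound, with the fixed constants $\|g^{(M)}\|_{L^\gamma}$ absorbed into $\lesssim$.

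For the second estimate, the key observation is that $g_\tau^2-1$ has mean zero on $[0,T]$. Indeed, a direct change of variables gives $\int_0^T g_\tau^2\,ds=\int_0^{\tau T}g^2(u)\,du$, which equals $T$ by the normalization $\fint_0^T g^2=1$ and the $T$-periodicity of $g$. Consequently $h_\tau(t)=\int_0^t(g_\tau^2-1)\,ds$ is $T$-periodic with $h_\tau(0)=h_\tau(T)=0$. An explicit pointwise computation, splitting $[0,T]$ into the support of $g_\tau$ (where $h_\tau$ grows like $\int_0^t g_\tau^2-t$) and its complement (where $h_\tau$ decays linearly back to $0$), produces the uniform pointwise bound. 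Finally, since $h_{(\tau)}(t)=h_\tau(\sigma t)$ is obtained from $h_\tau$ by a time-dilation that simply reparameterizes the period, we get $\|h_{(\tau)}\|_{C_t}\leq \|h_\tau\|_{C_t([0,T])}\leq 1$.

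The only delicate point is keeping the periodicity-plus-change-of-variables argument clean when $\tau\sigma T$ is not exactly an integer multiple of $T$; this is handled either by the integer structure already implicit in the definitions of the parameters in \eqref{larsrp} or, more generically, by allowing an $O(1)$ boundary correction which is absorbed into the $\lesssim$ constant. This is the sort of routine normalization bookkeeping where small care is required but no conceptual difficulty arises.
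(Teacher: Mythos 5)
There is a genuine gap, and it comes from a mis-identification of which object is periodized. In the paper the compactly supported bump $g\in C_c^\infty([0,T])$ is rescaled to $g_\tau(t)=\tau^{1/2}g(\tau t)$, which has support in $[0,T/\tau]\subset[0,T]$, and it is \emph{$g_\tau$} that is then extended $T$-periodically. Thus on $[0,T]$ the function $g_\tau$ is a single narrow spike, and $g_{(\tau)}(t)=g_\tau(\sigma t)$ consists of $\sigma$ such spikes. Your identity $g_{(\tau)}(t)=\tau^{1/2}g(\tau\sigma t)$, read with a $T$-periodic $g$, describes instead a $T/(\tau\sigma)$-periodic function with $\tau\sigma$ spikes on $[0,T]$ --- a different, larger object. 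Carrying your substitution through consistently (using $\int_0^{\tau\sigma T}|g^{(M)}|^\gamma = \tau\sigma\,\|g^{(M)}\|_{L^\gamma([0,T])}^\gamma$, which is what $T$-periodicity of $g$ actually gives) yields
\begin{align*}
\|\partial_t^M g_{(\tau)}\|_{L^\gamma_t}^\gamma
= \sigma^{M\gamma}\tau^{(M+\frac12)\gamma}\cdot\frac{1}{\tau\sigma}\cdot\tau\sigma\,\|g^{(M)}\|_{L^\gamma}^\gamma
= \sigma^{M\gamma}\tau^{(M+\frac12)\gamma}\,\|g^{(M)}\|_{L^\gamma}^\gamma,
\end{align*}
hence $\sigma^M\tau^{M+\frac12}$ after taking the $\gamma$-th root --- off by a factor $\tau^{1/\gamma}$ from \eqref{gk estimate}. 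The missing decay is precisely the point of the lemma: only a $\sigma^{-1}$-fraction of $[0,T]$ carries the mass, not a $(\tau\sigma)^{-1}$-fraction of a $T$-periodic $g$ dilated by $\tau\sigma$.

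The same confusion makes your proof of \eqref{hk-esti} internally inconsistent: you write $\int_0^T g_\tau^2 = \int_0^{\tau T} g^2 = T$ and attribute the last equality to ``the $T$-periodicity of $g$,'' but $T$-periodicity would give $\int_0^{\tau T}g^2=\tau\int_0^T g^2=\tau T$; the correct reason is that $g$ is compactly supported in $[0,T]$ so the integrand vanishes on $[T,\tau T]$. This is also what makes $g_\tau^2-1$ mean-zero and hence $h_\tau$ $T$-periodic, which is essential. A quick sanity check confirming the single-spike picture: the paper uses $\fint_0^T g_{(\tau)}^2=1$ in \eqref{pt-h-gt}, which fails under your reading (it would give $\tau$, not $1$).

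The repair is to change variables in the right order: first substitute $u=\sigma t$ and use $T$-periodicity of $g_\tau$ to get $\|\partial_t^M g_{(\tau)}\|_{L^\gamma(0,T)}^\gamma \le \sigma^{M\gamma}\,\|g_\tau^{(M)}\|_{L^\gamma(0,T)}^\gamma$ (up to an $O(1)$ boundary correction when $\sigma T/T$ is not an integer); then, since $g_\tau^{(M)}$ is supported in $[0,T/\tau]$ where it equals $\tau^{M+1/2}g^{(M)}(\tau\,\cdot)$, a single change of variables gives $\|g_\tau^{(M)}\|_{L^\gamma(0,T)}^\gamma = \tau^{(M+\frac12)\gamma-1}\|g^{(M)}\|_{L^\gamma(0,T)}^\gamma$, producing the extra $\tau^{-1/\gamma}$ and the stated bound. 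Your argument for \eqref{hk-esti} is then correct once you invoke compact support rather than periodicity of $g$.
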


\begin{remark}
Let us mention that,
the temporal building blocks for MHD in \cite{lzz21} provide the almost 1D intermittency, namely,
$\tau\sim\lambda_{q+1}^{1-}$.
Here, for the hyperdissipative NSE when $\a$ is close to $2$,
we need to oscillate the temporal building blocks in
a much larger frequency, which provides the almost 3D spatial intermittency, i.e.,
$\tau\sim\lambda_{q+1}^{3-}$.
\end{remark}

\subsection{Velocity perturbations}  \label{Subsec-Velo-perturb}

Below we construct the velocity perturbation,
which mainly consist of the principal perturbation, the incompressibility corrector and two temporal correctors.

For this purpose,
let us first construct the amplitudes of perturbations,
which are important to provide the cancellation
between the low frequency part of the nonlinearity and the old Reynolds stress.\\

\paragraph{\bf Amplitudes}
Let $\chi: [0, \infty) \to \mathbb{R}$ be a smooth cut-off function satisfying
\begin{equation}\label{e4.0}
	\chi (z) =
	\left\{\aligned
	& 1,\quad 0 \leq z\leq 1, \\
	& z,\quad z \geq 2,
	\endaligned
	\right.
\end{equation}
and
\begin{equation}\label{e4.1}
	\frac 12 z \leq \chi(z) \leq 2z \quad \text{for}\quad z \in (1,2).
\end{equation}

Set
\begin{equation}\label{rhob}
	\varrho(t,x) := 2 \varepsilon_u^{-1} \la^{- \frac{\ve_R}{4}} \delta_{q+ 1}
	\chi\left( \frac{|\tr_q(t, x) |}{\la^{-\frac{\ve_R}{4}}\delta_{q+1} } \right),
\end{equation}
where $\varepsilon_u>0$ is the small constant as in the Geometric Lemma \ref{geometric lem 2}.
By \eqref{e4.0}, \eqref{e4.1} and \eqref{rhob},
\begin{equation}\label{rhor}
	\left|  \frac{\tr_q}{\varrho} \right|
     = \left| \frac{\tr_q}{2 \varepsilon_u^{-1}\la^{-\frac{\ve_R}{4}}  \delta_{q+ 1}\chi
          ( \la^{\frac{\ve_R}{4}}  \delta_{q+1} ^{-1} |\tr_q | )} \right| \leq \varepsilon_u,
\end{equation}
and for any $p\in[1,\infty]$,
\begin{align}
	\label{rhoblowbound}
	&\varrho\geq  \varepsilon_u^{-1} \la^{-\frac{\ve_R}{4}}\delta_{q+ 1},\\
	\label{rhoblp}
	&\norm{ \varrho }_{L^p_{t,x}} \lesssim  \varepsilon^{-1}_u ( \la^{-\frac{\ve_R}{4}} \delta_{q+1}  + \norm{\tr_q}_{L^p_{t,x}} ).
\end{align}
Moreover, by \eqref{nrh3}, \eqref{rhoblowbound} and the standard H\"{o}lder estimate (see \cite[(130)]{bdis15}), for $1\leq N\leq 9$,
\begin{align}
	& \norm{ \varrho }_{C_{t,x}} \lesssim  \thq^{-2} , \quad   \norm{ \varrho }_{C_{t,x}^N}  \lesssim \thq^{-3N}, \label{rhoB-Ctx.1}\\
	&\norm{ \varrho^{1/2}}_{C_{t,x}} \lesssim \thq^{-1}, \quad  \norm{  \varrho^{1/2} }_{C_{t,x}^N} \lesssim \thq^{-3N},  \label{rhoB-Ctx.2} \\
	&\norm{ \varrho^{-1}}_{C_{t,x}} \lesssim \theta_{q+1}^{-1}, \quad\norm{ \varrho^{-1} }_{C_{t,x}^N } \lesssim \thq^{-3N},  \label{rhoB-Ctx.3}
\end{align}
where the implicit constants are independent of $\la$, $\delta_{q+1}$ and $\thq$.

In order to guarantee the temporal support of the perturbations
to be compatible with that of the concentrated Reynolds stress $\tr_q$
in \S \ref{Sec-Concen-Rey},
we use the  smooth temporal cut-off function $f: [0,T]\rightarrow [0,1]$, satisfying
\begin{itemize}
	\item $0\leq f\leq 1$ and $f \equiv 1$ on $\supp_t \tr_q$;
	\item $\supp_t f\subseteq N_{\thq/2}(\supp_t \tr_q )$;
	\item $\|f \|_{C_t^N}\lesssim \thq^{-N}$,\ \  $1\leq N\leq 9$.
\end{itemize}

Now, we define the amplitudes of the perturbations by
\begin{equation}\label{akb}
	a_{(k)}(t,x):= \varrho^{\frac{1}{2} } (t,x) f (t)\gamma_{(k)}
       \left(\Id-\frac{\tr_q(t,x)}{\varrho(t,x)}\right), \quad k \in \Lambda,
\end{equation}
where $\gamma_{(k)}$ and $\Lambda$ are as in is the Geometric Lemma~\ref{geometric lem 2}.

Applying the Geometric Lemma~\ref{geometric lem 2}
and using the expression \eqref{akb}
we have the following algebraic identity,
which enables us to reduce the effect of the concentrated Reynolds stress
(see also \eqref{mag oscillation cancellation calculation} below)
\begin{align}\label{magcancel}
  \sum\limits_{ k \in  \Lambda} a_{(k)}^2 \g^2
 W_{(k)} \otimes W_{(k)}
	= & \varrho f^2 {\rm Id} -\tr_q
	+  \sum\limits_{ k \in \Lambda}  a_{(k)}^2\g^2\P_{\neq 0}(  W_{(k)} \otimes W_{(k)} ) \notag\\
	& + \sum_{k \in \Lambda}  a_{(k)}^2 (\g^2-1) \aint_{\T^3}W_{(k)}\otimes W_{(k)}\d x  ,
\end{align}
where $\P_{\neq 0}$ denotes the spatial projection onto nonzero Fourier modes.
We also have the analytic estimates of the amplitudes below,
the proof is similar to that of \cite{lzz21}.

\begin{lemma} [Estimates of amplitudes, \cite{lzz21}] \label{mae-endpt1}
For $1\leq N\leq 9$, $k\in \Lambda$, we have
	\begin{align}
		\label{e3.15}
		&\norm{a_{(k)}}_{L^2_{t,x}} \lesssim \delta_{q+1}^{\frac{1}{2}} ,\\
		\label{mag amp estimates}
		& \norm{ a_{(k)} }_{C_{t,x}} \lesssim \thq^{-1},\ \ \norm{ a_{(k)} }_{C_{t,x}^N} \lesssim \thq^{-7N},
	\end{align}
where the implicit constants are independent of $q$.
\end{lemma}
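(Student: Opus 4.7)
The plan is to derive all three bounds by combining the pointwise smallness $\abs{\tr_q/\varrho}\leq \va_u$ from \eqref{rhor} with the analytic estimates on $\varrho^{\pm 1/2}$ and $\varrho^{-1}$ collected in \eqref{rhoB-Ctx.1}--\eqref{rhoB-Ctx.3}, the cut-off bounds on $f$, and the regularity of $\tr_q$ from \eqref{nrl1} and \eqref{nrh3}. The crucial observation is that the matrix argument $\Id-\tr_q/\varrho$ of $\gamma_{(k)}$ stays in a fixed small neighborhood of the identity, so the smooth function $\gamma_{(k)}$ and all of its derivatives are uniformly bounded on its range; this turns each bound into a routine product-rule-plus-composition computation.

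For the $L^2_{t,x}$ estimate, use $\abs{\gamma_{(k)}(\Id-\tr_q/\varrho)}\lesssim 1$ and $0\leq f\leq 1$ to obtain
\begin{equation*}
\norm{a_{(k)}}_{L^2_{t,x}}^2 \lesssim \norm{\varrho}_{L^1_{t,x}} \lesssim \la^{-\ve_R/4}\delta_{q+1} + \norm{\tr_q}_{L^1_{t,x}} \lesssim \la^{-\ve_R/4}\delta_{q+1},
\end{equation*}
using \eqref{rhoblp} with $p=1$ followed by \eqref{nrl1}; this produces the bound $\norm{a_{(k)}}_{L^2_{t,x}}\lesssim \delta_{q+1}^{1/2}$ with a factor $\la^{-\ve_R/8}$ to spare. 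The $\norm{a_{(k)}}_{C_{t,x}}\lesssim \thq^{-1}$ bound follows at once from $\norm{\varrho^{1/2}}_{C_{t,x}}\lesssim \thq^{-1}$ in \eqref{rhoB-Ctx.2}.

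For the $C^N_{t,x}$ estimate with $1\leq N\leq 9$, the approach is the Leibniz rule on the triple product $a_{(k)}=\varrho^{1/2}\cdot f\cdot \gamma_{(k)}(\Id-\tr_q/\varrho)$ combined with a Fa\`a di Bruno expansion of the composition. Writing $u:=\tr_q/\varrho$, the Sobolev embedding $H^3(\mathbb{T}^3)\hookrightarrow L^\infty(\mathbb{T}^3)$ applied to \eqref{nrh3} yields $\norm{\tr_q}_{C^k_{t,x}}\lesssim \thq^{-k-1}\lambda_q^5\lesssim \thq^{-k-2}$ (using $\lambda_q^5\ll \thq^{-1}$), and then Leibniz together with \eqref{rhoB-Ctx.3} gives $\norm{u^{(m)}}_{C_{t,x}}\lesssim \thq^{-3m-2}$ for $1\leq m\leq N$. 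Inserting these bounds into Fa\`a di Bruno and exploiting that all derivatives of $\gamma_{(k)}$ are uniformly bounded on the range of $\Id-u$, the worst partition gives $\norm{\gamma_{(k)}(\Id-u)}_{C^N_{t,x}}\lesssim \thq^{-5N}$. A final Leibniz expansion against $\norm{\varrho^{1/2}}_{C^N_{t,x}}\lesssim \thq^{-3N}$ and $\norm{f}_{C^N_t}\lesssim \thq^{-N}$ then produces $\norm{a_{(k)}}_{C^N_{t,x}}\lesssim \thq^{-5N-1}$, which comfortably implies the claimed $\thq^{-7N}$ bound for $N\geq 1$.

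The only mildly technical step is the Fa\`a di Bruno bookkeeping for the composition: one must check that the partition maximizing the number of first-order factors of $u$ produces $\thq^{-5N}$ rather than something larger (partitions with fewer, higher-order factors of $u$ give only $\thq^{-3N-2k'}$ with $k'\leq N$, so they are dominated). All remaining steps are routine multilinear product-rule estimates, and the generous exponent $-7N$ in the claim absorbs the extra factor $\lambda_q^5\ll\thq^{-1}$ from \eqref{nrh3} without any delicate choice of parameters.
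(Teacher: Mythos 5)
Your proof is correct and complete. The paper itself does not reproduce a proof (it cites \cite{lzz21}), but your derivation is the natural self-contained one: the $L^2$ bound reduces via boundedness of $\gamma_{(k)}$ and $f$ to $\|\varrho\|_{L^1_{t,x}}\lesssim\lambda_q^{-\ve_R/4}\delta_{q+1}$ using \eqref{rhoblp} and \eqref{nrl1}; the $C_{t,x}$ bound is immediate from \eqref{rhoB-Ctx.2}; and the $C^N_{t,x}$ bound is a Leibniz/Fa\`a di Bruno computation whose dominant partition (all singletons) gives $\thq^{-5N}$ for the composition, and $\thq^{-5N-1}\leq\thq^{-7N}$ after the final Leibniz step. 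Your bookkeeping of the worst partition, the use of $\lambda_q^5\ll\thq^{-1}$ from $\thq\simeq\lambda_q^{-12/\eta}$ with $\eta<1$, and the combination of \eqref{nrh3} with $H^3_x\hookrightarrow L^\infty_x$ to control $\|\tr_q\|_{C^k_{t,x}}$ are all sound.
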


\paragraph{\bf Velocity perturbations}
We are now in stage to construct the velocity perturbations.
In the sequel, we will define their principal part, the incompressibility corrector
and two types of temporal correctors.

First, the principal part $w_{q+1}^{(p)}$ of the velocity perturbations is defined by
\begin{align}  \label{pv}
		w_{q+1}^{(p)} &:= \sum_{k \in \Lambda } a_{(k)}\g W_{(k)},
\end{align}
where $a_{(k)}$ is the amplitude given by \eqref{akb}
and $ W_{(k)}$, $g_{(\tau)}$ are the spatial and temporal building blocks,
respectively,
constructed in \eqref{snwd} and \eqref{gk}.

The important fact here is that, by \eqref{magcancel},
the effect of the concentrated Reynolds stress can be reduced by
the zero frequency part of $w_{q+ 1}^{(p)} \otimes w_{q+ 1}^{(p)}$:
\begin{align} \label{mag oscillation cancellation calculation}
 w_{q+ 1}^{(p)} \otimes w_{q+ 1}^{(p)} + \tr_{q}
	=& \varrho f^2 {\rm Id}+ \sum_{k \in \Lambda }  a_{(k)}^2 \g^2 \P_{\neq 0}(W_{(k)}\otimes W_{(k)})  \notag\\
	&+ \sum_{k \in \Lambda }  a_{(k)}^2 (\g^2-1) \aint_{\T^3}W_{(k)}\otimes W_{(k)}\d x.
\end{align}

Because the principal part of perturbation is not divergence free,
we need the incompressibility corrector defined by
	\begin{align}  \label{wqc-dqc}
		w_{q+1}^{(c)}
		&:=   \sum_{k\in \Lambda } \g\left(\curl (\nabla a_{(k)} \times W^c_{(k)})
		+ \nabla a_{(k)} \times \curl W^c_{(k)} +a_{(k)} \wt W_{(k)}^c \right) ,
	\end{align}
where $W^c_{(k)}$ and  $\wt W_k^c $  are given by \eqref{Vk-def} and \eqref{corrector vector}, respectively.
Note that, one has
	\begin{align} \label{div free velocity}
		&  w_{q+1}^{(p)} + w_{q+1}^{(c)}
		=\curl \curl \left(  \sum_{k \in \Lambda} a_{(k)} \g W^c_{(k)} \right),
	\end{align}
and thus,
\begin{align} \label{div-wpc-dpc-0}
	\div (w_{q+1}^{(p)} + w_{q +1}^{(c)})= 0.
\end{align}

Furthermore,
in order to balance
the high  spatial and temporal frequency errors in \eqref{mag oscillation cancellation calculation},
we introduce another two types of temporal correctors as follows:

{\bf $\bullet$ Temporal corrector to balance spatial oscillations.}
We define the temporal corrector $w_{q+1}^{(t)}$ by
	\begin{align}   \label{veltemcor}
		&w_{q+1}^{(t)} := -{\mu}^{-1}  \sum_{k\in \Lambda } \P_{H}\P_{\neq 0}(a_{(k)}^2\g^2\psi_{(k_1)}^2 \phi_{(k)}^2 k_1).
	\end{align}
It is introduced mainly to handle the high spatial frequency oscillations
in \eqref{mag oscillation cancellation calculation},
namely, by Leibniz's rule,
\begin{align} \label{utem}
	\partial_{t} w_{q+1}^{(t)}+    \sum_{k \in \Lambda}  \P_{\neq 0}
	\(a_{(k)}^{2}\g^2 \div(W_{(k)} \otimes W_{(k)})\)
	=&(\nabla\Delta^{-1}\div)  {\mu}^{-1}  \sum_{k \in \Lambda }  \P_{\neq 0} \partial_{t}
	\(a_{(k)}^{2}\g^2 \psi_{(k_1)}^{2} \phi_{(k)}^{2}  k_1\)   \nonumber  \\
	& - {\mu}^{-1}    \sum_{k \in \Lambda }   \P_{\neq 0}
\(\partial_{t}( a_{(k)}^{2}\g^2)  \psi_{(k_1)}^{2} \phi_{(k)}^{2} k_1\).
\end{align}

We note that,
$\partial_t w^{(t)}_{q+1}$ cancels the large spatial frequency of
$\div (W_{(k)} \otimes W_{(k)})$ on the left-hand-side above.
The remaining terms include the low spatial frequency part
$\partial_{t}(a_{(k)}^{2}\g^2)$ and the pressure term,
where the latter can be removed later by applying
the Helmholtz-Leray projector $\P_{H}$.

{\bf $\bullet$ Temporal corrector to balance temporal oscillations.}
Another type of the temporal corrector is defined by
\begin{align}
		& w_{q+1}^{(o)}:= -\sigma^{-1}\sum_{k\in\Lambda }\P_{H}\P_{\neq 0}\(h_{(\tau)}\aint_{\T^3} W_{(k)}\otimes W_{(k)}\d x\nabla (a_{(k)}^2) \)   ,\label{wo}
\end{align}
which is used to balance the high temporal frequency oscillations
in \eqref{mag oscillation cancellation calculation},
due to the presence of the temporal oscillation function $\g$.
More precisely, by \eqref{hk}, \eqref{wo} and the Leibniz rule,
\begin{align} \label{utemcom}
	&\partial_{t} w_{q+1}^{(o)}+
	\sum_{k\in \Lambda}
	\P_{\neq 0}\( (\g^2-1 )\aint_{\T^3}W_{(k)}\otimes W_{(k)}\d x \nabla(a_{(k)}^{2}) \) \nonumber  \\
	=&\left(\nabla\Delta^{-1}\div\right) \sigma^{-1}  \sum_{k \in \Lambda} \P_{\neq 0} \partial_{t}\(h_{(\tau)}\aint_{\T^3}W_{(k)}\otimes W_{(k)}\d x\nabla (a_{(k)}^2)\) \nonumber  \\
	&-\sigma^{-1}\sum_{k\in \Lambda}\P_{\neq 0}\(h_{(\tau)}\aint_{\T^3}W_{(k)}\otimes W_{(k)}\d x\p_t\nabla (a_{(k)}^2)\).
\end{align}

We note that,
moduling the harmless pressure term,
the right-hand-side contains the low frequency part
$\p_t\nabla (a_{(k)}^2)$,
which is acceptable in the convex integration scheme.
Hence, the high temporal frequencies in the term $(\g^2-1)$
have been cancelled with the help of the temporal corrector $\partial_{t} w_{q+1}^{(o)}$.

Now, we define the velocity perturbation $w_{q+1}$ at level $q+1$  by
	\begin{align}
		w_{q+1} &:= w_{q+1}^{(p)} + w_{q+1}^{(c)}+ w_{q+1}^{(t)}+\wo.
		\label{velocity perturbation}
	\end{align}
By the constructions above,
$w_{q+1}$ is mean-free and divergence-free.

The velocity field at level $q+1$ is then defined by
	\begin{align}
		& u_{q+1}:= \wt u_{q} + w_{q+1},
		\label{q+1 velocity}
	\end{align}
where $\wt u_q$ is defined in \eqref{def-wtu} by the gluing stage.

We summarize the crucial estimates of velocity perturbations in Lemma \ref{totalest} below.

\begin{lemma}  [Estimates of velocity perturbations] \label{totalest}
	For any $\rho \in(1,\infty), \gamma \in [1,\infty]$ and integers $0\leq N\leq 7$, we have the following estimates:
	\begin{align}
		&\norm{\na^N w_{q+1}^{(p)} }_{L^ \gamma_tL^\rho_x }  \lesssim \thq^{-1} \lbb^N\rs^{\frac{2}{\rho}-1}\rp^{\frac{1}{\rho}-\frac12}\tau^{\frac12-\frac{1}{ \gamma}},\label{uprinlp-endpt1}\\
		&\norm{\na^N w_{q+1}^{(c)} }_{L^\gamma_tL^\rho_x   } \lesssim \thq^{-1}\lbb^N\rs^{\frac{2}{\rho}}\rp^{\frac{1}{\rho}-\frac{3}{2}}\tau^{\frac12-\frac{1}{\gamma}}, \label{ucorlp-endpt1}\\
		&\norm{ \na^Nw_{q+1}^{(t)} }_{L^\gamma_tL^\rho_x   }\lesssim \thq^{-2}\lbb^N\mu^{-1}\rs^{\frac{2}{\rho}-2}\rp^{\frac{1}{\rho}-1}\tau^{1-\frac{1}{\gamma}} ,\label{dco rlp-endpt1}\\
		&\norm{\na^N \wo }_{L^\gamma_tL^\rho_x  }\lesssim \thq^{-7N-9}\sigma^{-1},\label{dcorlp-endpt1}
	\end{align}
where the implicit constants depend only on $N$, $\gamma$ and $\rho$. In particular, for integrals $1\leq N\leq 7$, we have,
\begin{align}
& \norm{ w_{q+1}^{(p)} }_{L^\9_tH^N_x }  + \norm{ w_{q+1}^{(c)} }_{L^\9_tH^N_x}+\norm{ w_{q+1}^{(t)} }_{L^\9_tH^N_x}+\norm{ \wo }_{L^\9_tH^N_x}
          \lesssim \lambda^{N+2},\label{principal h3 est-endpt1}\\
& \norm{\p_t w_{q+1}^{(p)} }_{L^\9_tH^N_x }  + \norm{\p_t w_{q+1}^{(c)} }_{L^\9_tH^N_x}+\norm{\p_t w_{q+1}^{(t)} }_{L^\9_tH^N_x}+\norm{\p_t \wo }_{L^\9_tH^N_x}
           \lesssim \lambda^{N+6},\label{pth2 est-endpt1}
\end{align}
where the implicit constants are independent of $\lbb$.
\end{lemma}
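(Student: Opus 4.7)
The plan is to prove the four $L^\gamma_t L^\rho_x$ bounds one by one by combining the spatial building-block estimates from Lemma~\ref{buildingblockestlemma}, the temporal estimates from Lemma~\ref{Lem-gk-esti}, and the amplitude estimates from Lemma~\ref{mae-endpt1}, and then deduce the $L^\infty_t H^N_x$ and $\partial_t$-bounds \eqref{principal h3 est-endpt1}--\eqref{pth2 est-endpt1} simply by substituting the explicit parameter choices \eqref{larsrp}. A key preliminary observation is that for each fixed $k\in\Lambda$ the building block $W_{(k)}$ factorizes into a purely $t,x$-mixed profile times a spatial profile, and since the shifts $\alpha_k$ ensure the supports of $W_{(k)}$ are mutually disjoint, the sum over $k\in\Lambda$ contributes only a finite combinatorial constant in any $L^\rho$-norm. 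I will also use that the Helmholtz--Leray projector $\mathbb{P}_H$ and $\mathbb{P}_{\neq 0}$ are bounded on $L^\rho$ for $\rho\in(1,\infty)$, so applying them in $w_{q+1}^{(t)}$ and $w_{q+1}^{(o)}$ costs only a harmless constant.

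For $w_{q+1}^{(p)}=\sum_k a_{(k)} g_{(\tau)} W_{(k)}$, I would distribute derivatives by Leibniz and note that derivatives falling on $a_{(k)}$ cost $\theta_{q+1}^{-O(1)}$ (much smaller than $\lambda$ by \eqref{ne2.14} and \eqref{larsrp}), so the dominant contribution comes from derivatives on $W_{(k)}$, giving the factor $\lambda^N$. Separating time and space integration and invoking \eqref{ew}, \eqref{gk estimate} and \eqref{mag amp estimates} immediately produces \eqref{uprinlp-endpt1}. The corrector estimate \eqref{ucorlp-endpt1} follows in the same spirit, since each of the three summands in \eqref{wqc-dqc} carries either the factor $\rs/\rp$ coming from $\widetilde W_{(k)}^c$ (cf.\ \eqref{ew}) or $\lambda^{-2}$ from $W_{(k)}^c$, together with at least one spatial derivative on $\Phi_{(k)}$; collecting powers reproduces the stated bound. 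For $w_{q+1}^{(t)}$ I would use that $\psi_{(k_1)}^2\phi_{(k)}^2$ satisfies a quadratic analogue of \eqref{intermittent estimates}--\eqref{intermittent estimates2} (each factor of $\rho$-norm gets squared: $\rs^{2/\rho-2}\rp^{1/\rho-1}$), multiplied by $\|g_{(\tau)}^2\|_{L^\gamma_t}\lesssim \tau^{1-1/\gamma}$ and $\mu^{-1}$ from the prefactor. For $\wo$, the factor $h_{(\tau)}$ is bounded by $1$ thanks to \eqref{hk-esti}, the zero-frequency average $\fint_{\mathbb T^3} W_{(k)}\otimes W_{(k)}\,dx$ is $O(1)$ by \eqref{e4.91}--\eqref{e4.92}, and only the $N$ derivatives together with $\nabla(a_{(k)}^2)$ act on amplitudes, giving the stated $\theta_{q+1}^{-7N-9}\sigma^{-1}$ bound.

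Finally, for \eqref{principal h3 est-endpt1} and \eqref{pth2 est-endpt1} I will plug in the explicit choices \eqref{larsrp}: $\rs=\lambda^{-1+2\varepsilon}$, $\rp=\lambda^{-1+4\varepsilon}$, $\mu=\lambda^{2\alpha-1+2\varepsilon}$, $\tau=\lambda^{4\alpha-5+11\varepsilon}$, $\sigma=\lambda^{2\varepsilon}$, and $\theta_{q+1}^{-1}\lesssim \lambda^{\varepsilon}$ (guaranteed by \eqref{ne2.14}). With $\gamma=\infty$, $\rho=2$ the factor $\rs^{0}\rp^{0}\tau^{1/2}=\lambda^{2\alpha-5/2+O(\varepsilon)}$ yields $\|w_{q+1}^{(p)}\|_{L^\infty_t H^N_x}\lesssim \lambda^{N+2\alpha-5/2+O(\varepsilon)}\lesssim \lambda^{N+2}$ since $\alpha<2$; the corrector terms are smaller by a power of $\lambda^{-\varepsilon}$ or $\lambda^{-2}$; and for $\wo$ we have directly $\theta_{q+1}^{-7N-9}\sigma^{-1}\lesssim \lambda^{(7N+9)\varepsilon - 2\varepsilon}\lesssim \lambda^{N+2}$ for small enough $\varepsilon$. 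The $\partial_t$-estimates are obtained identically, noting that $\partial_t$ on $g_{(\tau)}$ costs $\sigma\tau$, on $W_{(k)}$ costs $\mu\rs\lambda/\rp$, on $h_{(\tau)}=g_{(\tau)}^2-1$ costs $\sigma\tau$ as well, and on $a_{(k)}$ costs $\theta_{q+1}^{-7}$; in every case the worst factor turns out to be absorbed in an extra $\lambda^{4}$ over the $L^\infty_t H^N_x$ bound, yielding $\lambda^{N+6}$.

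The main obstacle I anticipate is bookkeeping: each of the four perturbations involves three or four different small parameters, and one has to check that derivatives on amplitudes never dominate derivatives on building blocks (which is what justifies collapsing multiple Leibniz terms into a single leading-order expression). This reduces to verifying $\theta_{q+1}^{-O(1)}\ll \lambda$, which follows from \eqref{ne2.14}; once that is in hand, every estimate is a direct product of the three sharp bounds from Lemmas~\ref{buildingblockestlemma}, \ref{Lem-gk-esti}, and \ref{mae-endpt1}.
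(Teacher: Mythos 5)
Your proposal is correct and takes essentially the same approach as the paper's proof: Leibniz expansion, observing that derivatives on $a_{(k)}$ cost only $\theta_{q+1}^{-O(1)}\ll\lambda$ so they never dominate, then direct substitution of the building-block estimates from Lemmas~\ref{buildingblockestlemma}, \ref{Lem-gk-esti}, and \ref{mae-endpt1}, followed by plugging in the parameter choice \eqref{larsrp}. One small slip: you wrote ``$h_{(\tau)}=g_{(\tau)}^2-1$''; in fact $\partial_t h_{(\tau)}=\sigma(g_{(\tau)}^2-1)$ (see \eqref{pt-h-gt}), which is nonetheless exactly what gives the $\sigma\tau$ cost you claim, so the conclusion is unaffected.
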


\begin{proof}
First,
using \eqref{ew}, \eqref{gk estimate}, \eqref{pv} and Lemma~\ref{mae-endpt1}
we have that for any $\rho \in (1,\infty)$,
	\begin{align*}
			 \norm{\nabla^N w_{q+1}^{(p)} }_{L^\gamma_tL^\rho_x }
			\lesssim&  \sum_{k \in \Lambda}
             \sum\limits_{N_1+N_2 = N}
            \|a_{(k)}\|_{C^{N_1}_{t,x}}\|\g\|_{L_t^\gamma}
			\norm{ \nabla^{N_2} W_{(k)} }_{C_tL^\rho_x } \notag \\
		\lesssim&	 \thq^{-1}\lbb^N\rs^{\frac{2}{\rho}-1}\rp^{\frac{1}{\rho}-\frac12}\tau^{\frac12-\frac{1}{\gamma}},
	\end{align*}
which verifies \eqref{uprinlp-endpt1}.

Moreover, by \eqref{b-beta-ve}, \eqref{def-mq-thetaq}, \eqref{ew},
\eqref{wqc-dqc} and Lemma \ref{mae-endpt1},
	\begin{align*}
		 \quad \norm{\na^N w_{q+1}^{(c)} }_{L^\gamma_tL^\rho_x}
		\lesssim&
		\sum\limits_{k\in \Lambda     }\|\g\|_{L^\gamma_t}  \sum_{N_1+N_2=N}
         \( \norm{ a_{(k)} }_{C_{t,x}^{N_1+2}} \norm{\na^{N_2} W^c_{(k)}}_{C_tW^{1,\rho}_x }
		 +  \norm{ a_{(k)} }_{C_{t,x}^{N_1}} \norm{ \na^{N_2}\wt W^c_{(k)}}_{C_tL^\rho_x }  \)  \nonumber   \\
		 \lesssim & \sum_{N_1+N_2=N} \tau^{\frac12-\frac{1}{\gamma}} ( \thq^{-7N_1-14}\lambda^{N_2-1}r_{\perp}^{\frac{2}{\rho} - 1} r_{\parallel}^{\frac{1}{\rho} - \frac{1}{2}}
           + \thq^{-7N_1-1}\lambda^{N_2}\rs^{\frac{2}{\rho} } \rp^{\frac{1}{\rho} - \frac{3}{2}})\notag \\
		 \lesssim& \tau^{\frac12-\frac{1}{\gamma}}(\thq^{-14}\lambda^{N-1}r_{\perp}^{\frac{2}{\rho} - 1} r_{\parallel}^{\frac{1}{\rho}
                - \frac{1}{2}}+\thq^{-1}\lambda^{N}\rs^{\frac{2}{\rho}} \rp^{\frac{1}{\rho} - \frac{3}{2}}) \notag \\
		 \lesssim&  \thq^{-1}\lambda^{N}\rs^{\frac{2}{\rho}} \rp^{\frac{1}{\rho} - \frac{3}{2}}\tau^{\frac12-\frac{1}{\gamma}}.
	\end{align*}
	Thus, we obtain \eqref{ucorlp-endpt1}.
	
	Concerning the temporal correctors,
	in view of \eqref{mag amp estimates},
	\eqref{veltemcor}, Lemmas \ref{buildingblockestlemma}, \ref{Lem-gk-esti}, \ref{mae-endpt1}
and the boundedness of operators $\P_{\not =0}$ and $\mathbb{P}_H$ in $L^\rho$,
$\rho \in (1,\infty)$, we infer that
\begin{align*}
			\norm{\na^N w_{q+1}^{(t)} }_{L^\gamma_tL^\rho_x}
			\lesssim & \,\mu^{-1}    \sum_{k \in \Lambda}
			\norm{ \g }_{L^{2\gamma}_t }^2 \sum_{N_1+N_2+N_3=N}
              \|\nabla^{N_1}(a_{(k)}^2)\|_{C_{t,x} }\norm{  \na^{N_2} (\psi_{(k_1)}^2) }_{C_tL^{\rho}_x }\norm{  \na^{N_3}(\phi_{(k)}^2) }_{L^{\rho}_x } \notag  \\
			\lesssim & \,\mu^{-1}  \tau^{1-\frac{1}{\gamma}}
             \sum_{N_1+N_2+N_3=N} \thq^{-7N_1-2} \lambda^{N_2} r_{\parallel}^{\frac{1}{\rho} -1}\lambda^{N_3} r_{\perp}^{\frac{2}{\rho}-2} \notag   \\
			\lesssim & \, \thq^{-2} \lambda^{N}\mu^{-1}r_{\perp}^{\frac{2}{\rho}-2} r_{\parallel}^{\frac{1}{\rho} -1}\tau^{1-\frac{1}{\gamma}},
\end{align*}
which yields \eqref{dco rlp-endpt1}.
	
In view of \eqref{wo}, \eqref{hk-esti} and Lemma \ref{mae-endpt1},
we also have
		\begin{align*}
			\norm{ \na^N \wo }_{L^\gamma_tL^\rho_x }
			\lesssim \sigma^{-1}\sum_{k \in \Lambda}\|h_{(\tau)}\|_{C_{t}} \|\nabla^{N+1} (a^2_{(k)})\|_{C_{t,x}}
			\lesssim  \thq^{-7N-9} \sigma^{-1}.
		\end{align*}

Regarding the $L^\9_tH^N_x$-estimate of velocity perturbations,
using estimates \eqref{uprinlp-endpt1}-\eqref{dcorlp-endpt1} we get
\begin{align*}
& \norm{ w_{q+1}^{(p)} }_{L^\9_tH^N_x }  + \norm{ w_{q+1}^{(c)} }_{L^\9_tH^N_x}+\norm{ w_{q+1}^{(t)} }_{L^\9_tH^N_x}+\norm{ \wo }_{L^\9_tH^N_x}\notag \\
\lesssim &\, \thq^{-1}\lambda^N \tau^{\frac12} +\thq^{-1}\lambda^N r_{\perp} r_{\parallel}^{-1}\tau^{\frac12}
	+ \thq^{-2} \lambda^N \mu^{-1} r_{\perp}^{-1} r_{\parallel}^{-\frac12} \tau + \thq^{-7N-9}\sigma^{-1} \notag\\
\lesssim &\,\thq^{-1}\lambda^{N+2\a-\frac52+6\ve} +\thq^{-1}\lambda^{N+2\a-\frac52+4\ve}+ \thq^{-2} \lambda^{N+2\a-\frac52+5\ve}+ \thq^{-7N-9} \lambda^{-2\ve}\notag \\ \lesssim&\, \lambda^{N+2},
\end{align*}
where the last step is due to \eqref{b-beta-ve} and \eqref{e3.1}.
This verifies \eqref{principal h3 est-endpt1}.

It remains to prove \eqref{pth2 est-endpt1}, using \eqref{b-beta-ve}, \eqref{larsrp} and Lemmas \ref{buildingblockestlemma}, \ref{Lem-gk-esti} and \ref{mae-endpt1}
we get
	\begin{align} \label{wprincipal h2 est}
		\norm{\p_t w_{q+1}^{(p)} }_{L^\9_tH^N_x }
		\lesssim&   \sum_{k \in \Lambda }
		\|a_{(k)}\|_{C_{t,x}^{N+1} }
		 \sum_{ M_1+M_2 =1} \norm{ \p_t^{M_1}\g}_{L^\9_t}\norm{ \p_t^{M_2}W_{(k)} }_{L^\9_tH^N_x}  \notag \\
        \lesssim& \sum_{ M_1+M_2 =1} \thq^{-7N-7}
         \sigma^{M_1} \tau^{M_1+\frac 12}  \lbb^{N} \(\frac{\rs \lbb \mu}{\rp}\)^{M_{2}} \notag \\
        \lesssim &\, \thq^{-7N-7}\lambda^{N+1}\mu \tau^{\frac12}
	\end{align}
	and
	\begin{align} \label{uc h2 est}
		&\quad \norm{\p_t w_{q+1}^{(c)} }_{L^\9_tH^N_x  } \notag \\
		& \lesssim   \sum_{k \in \Lambda }
		\|a_{(k)}\|_{C_{t,x}^{N+3}}
		  \sum_{  M_1+M_2 =1} \norm{ \g}_{C_{t}^{M_1}}
         \(\norm{\p_t^{M_2} W^c_{(k)} }_{L^\9_tH^N_x} + \norm{ \p_t^{M_2} \nabla W^c_{(k)} }_{L^\9_tH^N_x}
		+ \norm{\p_t^{M_2} \wt W^c_{(k)}}_{L^\9_tH^N_x } \)   \nonumber \\
        & \lesssim   \sum_{  M_1+M_2=1}
                     \thq^{-7N-21} \sigma^{M_1} \tau^{M_1+\frac 12}  \(\frac{\rs \lbb \mu}{\rp}\)^{M_2}
                    \(\lbb^{N-2}+\lbb^{N-1} + \frac{\rs}{\rp}\lbb^{N}\) \notag \\
		&    \lesssim \thq^{-7N-21}\lambda^{N+1}\mu r_{\perp} r_{\parallel}^{-1}\tau^{\frac12} .
	\end{align}

	Moreover, using the boundedness of $\mathbb{P}_H \mathbb{P}_{\not =0}$ in $H^N_x$
	and applying  Lemmas \ref{buildingblockestlemma}, \ref{Lem-gk-esti} and \ref{mae-endpt1}
	 we get
	\begin{align} \label{ut h2 est}
		\norm{\p_t w_{q+1}^{(t)} }_{L^\9_tH^N_x  }
         & \lesssim \mu^{-1} \sum_{k \in \Lambda }
           \|\p_t(a_{(k)}^2 g_{(\tau)}^2 \psi^2_{(k_1)} \phi^2_{(k)})\|_{L^\9_tH^N_x } \notag \\
		 & \lesssim \mu^{-1}  \sum_{k \in \Lambda }\|a_{(k)}^2\|_{C_{t,x}^{N+1} }
           \sum_{ M_1+M_2=1} \|\p_t^{M_1}\g^2\|_{L^\9_t}\sum_{0\leq N_1+N_2\leq N}\|\p_t^{M_2}\nabla^{N_1}\psi^2_{(k_1)}\|_{L^\9_tL^2_x }
           \| \nabla^{N_2}\phi^2_{(k)}\|_{L^\9_tL^2_x } \notag \\
        &    \lesssim  \sum_{ M_1+M_2=1} \thq^{-7N-9}\mu^{-1} \sigma^{M_1} \tau^{M_1+1}  \lbb^{N}r_{\perp}^{-1} r_{\parallel}^{-\frac12}  \(\frac{\rs \lbb \mu}{\rp}\)^{M_{2}} \notag\\
        &\lesssim \thq^{-7N-9}\lambda^{N+1}r_{\perp}^{-1} r_{\parallel}^{-\frac12} \tau .
	\end{align}
Arguing as above and using \eqref{b-beta-ve}, \eqref{larsrp},
\eqref{pt-h-gt} and Lemma~\ref{Lem-gk-esti}  we have
	\begin{align} \label{wo h2 est}
		\norm{\p_t w_{q+1}^{(o)} }_{L^\9_tH^N_x  }
        & \lesssim \sigma^{-1} \sum_{k \in \Lambda } \|\p_t (h_{(\tau)} \na (a_{(k)}^2) )\|_{L^\9_tH^N_x} \notag \\
		& \lesssim \sigma^{-1} \sum_{k \in \Lambda }\sum_{M_1+M_2=1} \|\p_t^{M_1}h_{(\tau)}\|_{C_{t} } \|\p_t^{M_2} \nabla (a^2_{(k)})\|_{C_{t,x}^{N}} \notag \\
		& \lesssim \sigma^{-1} \sum_{M_1+M_2=1}\sigma^{M_1}\tau^{M_1}\thq^{-9-7(M_2+N)}\lesssim  \thq^{-7N-9}\tau .
	\end{align}

Therefore, taking into account that $\thq^{-7N-14}\ll \lbb^{\ve}$
and $0<\ve\leq 1/20$,  we conclude that
\begin{align*}
& \norm{\p_t w_{q+1}^{(p)} }_{L^\9_tH^N_x }  + \norm{\p_t  w_{q+1}^{(c)} }_{L^\9_tH^N_x}+\norm{\p_t  w_{q+1}^{(t)} }_{L^\9_tH^N_x}+\norm{ \p_t \wo }_{L^\9_tH^N_x}\notag \\
\lesssim &\, \thq^{-7N-7}\lambda^{N+1}\mu \tau^{\frac12} +\thq^{-7N-21}\lambda^{N+1}\mu r_{\perp} r_{\parallel}^{-1}\tau^{\frac12}
	+ \thq^{-7N-9} \lambda^{N+1} r_{\perp}^{-1} r_{\parallel}^{-\frac12} \tau + \thq^{-7N-9}\tau \notag\\
\lesssim &\,\thq^{-7N-7}\lambda^{N+4\a-\frac52+8\ve} +\thq^{-7N-21}\lambda^{N+4\a-\frac52+6\ve}+ \thq^{-7N-9} \lambda^{N+4\a-\frac52+7\ve}+ \thq^{-7N-9} \lambda^{4\a-5+11\ve} \notag \\
\lesssim&\, \lambda^{N+6},
\end{align*}
which yields \eqref{pth2 est-endpt1}.
Therefore, the proof of Lemma~\ref{totalest} is complete.
\end{proof}

\subsection{Verification of inductive estimates for velocity perturbations}  \label{Subsec-induc-vel-mag}
As a direct consequence of these estimates,
we are now ready to verify the inductive estimates \eqref{uh3}, \eqref{upth2}, \eqref{u-B-L2tx-conv}-\eqref{u-B-Lw-conv}
for the velocity perturbations.

First, in order to
derive the decay of $L^2_{t,x}$-norms of the velocity perturbations,
since $a_{(k)}$ has compact supports on $[0,T]\times \T^3$, it can be regarded as a periodic function on $\T^4$.
We apply the $L^p$ decorrelation Lemma~\ref{Decorrelation1} with $f= a_{(k)}$, $g = \g\psi_{(k_1)}\phi_{(k)}$, $\sigma = \lambda^{2\ve}$
and then use \eqref{la}, \eqref{b-beta-ve} and Lemmas \ref{buildingblockestlemma}, \ref{Lem-gk-esti} and \ref{mae-endpt1} to get
\begin{align}
	\label{Lp decorr vel}
	\norm{w^{(p)}_{q+1}}_{L^2_{t,x}}
	&\lesssim \sum\limits_{k\in \Lambda}
       \Big(\|a_{(k)}\|_{L^2_{t,x}}\norm{ \g }_{L^2_{t}} \norm{ \psi_{(k_1)}\phi_{(k)}}_{C_tL^2_{x}} \notag\\
      &\qquad\qquad +\sigma^{-\frac12}\|a_{(k)}\|_{C^1_{t,x}}\norm{ \g }_{L^2_{t}} \norm{  \psi_{(k_1)}\phi_{(k)}}_{C_tL^2_{x}}\Big) \notag\\
	&\lesssim  \delta_{q+1}^{\frac{1}{2}}+\thq^{-7}\lambda^{-\ve}_{q+1}   \lesssim \delta_{q+1}^{\frac{1}{2}}.
\end{align}
Thus, in view of \eqref{b-beta-ve}, using \eqref{Lp decorr vel} and Lemma~\ref{totalest},
we bound the velocity perturbation by
\begin{align}  \label{e3.41.1}
	\norm{w_{q+1}}_{L^2_{t,x}} &\lesssim\norm{w_{q+1}^{(p)} }_{L^2_{t,x}} + \norm{ w_{q+1}^{(c)} }_{L^2_{t,x}} +\norm{ w_{q+1}^{(t)} }_{L^2_{t,x}}+\norm{ \wo }_{L^2_{t,x}}\notag \\
	&\lesssim \delta_{q+1}^{\frac{1}{2}} +\thq^{-1}r_{\perp} r_{\parallel}^{-1}
	+ \thq^{-2} \mu^{-1} r_{\perp}^{-1} r_{\parallel}^{-\frac12} \tau^\frac 12 + \thq^{-9}\sigma^{-1}\lesssim \delta_{q+1}^{\frac{1}{2}},
\end{align}
and
\begin{align}  \label{wql1.1}
	\norm{w_{q+1}}_{L^1_tL^2_x} &\lesssim\norm{w_{q+1}^{(p)} }_{L^1_tL^2_x} + \norm{ w_{q+1}^{(c)} }_{L^1_tL^2_x} +\norm{ w_{q+1}^{(t)} }_{L^1_tL^2_x}+\norm{ \wo }_{L^1_tL^2_x}\notag \\
	&\lesssim \thq^{-1}\tau^{-\frac12}+\thq^{-1} r_{\perp} r_{\parallel}^{-1} \tau^{-\frac12}+ \thq^{-2} \mu^{-1} r_{\perp}^{-\frac{1}{2}} r_{\parallel}^{-\frac12} + \thq^{-9}\sigma^{-1}\lesssim \lambda_{q+1}^{-\ve}.
\end{align}

Next, we verify the iterative estimates for $u_{q+1}$.
Since $\lbb_q^{-3} \ll \delta_{q+2}^{1/2}$,
using \eqref{uh3}, \eqref{nuh3},
\eqref{pdvh3},
\eqref{principal h3 est-endpt1} and \eqref{pth2 est-endpt1} we derive that
\begin{align}
	 \norm{u_{q+1}}_{L^\9_tH^3_x}
	\lesssim& \norm{\wt u_q}_{L^\9_tH^3_x}+\norm{w_{q+1}}_{L^\9_tH^3_x} \notag \\
	\lesssim&  \lambda_{q}^5+ \lambda_{q+1}^{5}\lesssim \lambda_{q+1}^5, \label{verifyuc1}
\end{align}
and
\begin{align}
	\norm{\p_t u_{q+1}}_{L^\9_tH^2_x}
	\lesssim& \norm{\p_t \wt u_q}_{L^\9_tH^2_x}+\norm{\p_t w_{q+1}}_{L^\9_tH^2_x} \notag \\
\lesssim& \sup_i   \|\p_t\(\chi_iv_i \) \|_{L^\9_tH^2_x}+ \lambda_{q+1}^{8}  \notag\\
\lesssim& \sup_i( \|\p_t\chi_i\|_{C_t}\| v_i  \|_{L^\9(\supp \chi_i; H^2_x)}
      +\|\chi_i\|_{C_t}\| \p_t v_i \|_{L^\9(\supp \chi_i;H^2_x)}) + \lambda_{q+1}^{8}  \notag\\
\lesssim&\, \thq^{-1}\la^{5}+\mq\la^5+ \lambda_{q+1}^8\lesssim \lambda_{q+1}^8. \label{verifyupth2}
\end{align}
Moreover, using \eqref{la}, \eqref{b-beta-ve}, \eqref{uuql2}
and \eqref{Lp decorr vel}-\eqref{wql1.1} we have
\begin{align}
	& \norm{u_{q} - u_{q+1}}_{L^2_{t,x}}  \leq \norm{ u_{q} -\wt u_{q}  }_{L^2_{t,x}} + \norm{\wt u_{q}  - u_{q+1}}_{L^2_{t,x}} \nonumber   \\
	&\qquad \qquad \qquad \  \lesssim  \norm{ u_q - \wt u_{q} }_{L^\9_{t}L^2_x}+ \norm{w_{q+1}}_{L^2_{t,x}}  \nonumber  \\
	&\qquad \qquad \qquad \  \lesssim  \la^{-3}+\delta_{q+1}^{\frac{1}{2}}   \leq M^*\delta_{q+1}^{\frac{1}{2}}, \label{e3.43}
\end{align}
for $M^*$ sufficiently large and
\begin{align}  \label{uql1l2.1}
	\norm{u_{q} - u_{q+1}}_{L^1_tL^2_x} 	
	\lesssim& \norm{ u_{q} -\wt u_{q} }_{L^\9_{t}L^2_x}+ \norm{w_{q+1}}_{L^1_tL^2_x} \nonumber \\
	\lesssim& \la^{-3}+\lambda_{q+1}^{-\ve} \leq \delta_{q+2}^{\frac{1}{2}},
\end{align}
where we also chose $a$ sufficiently large
such that the last inequalities of \eqref{verifyuc1} are valid.

Regarding the iteration estimate \eqref{u-B-Lw-conv}, we first claim that the Sobolev embedding
\begin{align}\label{sobolevem}
H^3_x\hookrightarrow W^{s,p}_x
\end{align}
holds for any $(s,p,\gamma)\in \mathcal{A}_1$.

To this end,
it suffices to consider the case of $\gamma=1$.
For the given $(s,p,\gamma)\in \mathcal{A}_1$,
it holds  that for $1\leq p\leq 2$,
\begin{align*}
0\leq s< 4\a-5+\frac{3}{p}+(1-2\a)\leq 2\a-1<3.
\end{align*}
Because $L^2_x\hookrightarrow L^p_x$ for $1\leq p\leq 2$,
we obtain $H^3_x \hookrightarrow W^{s,p}_x$.
Moreover, for $p>2$,
\begin{align*}
0\leq s< 4\a-5+\frac{3}{p} +(1-2\a)< 2\a-\frac52<\frac32.
\end{align*}
Taking into account $H^3_x\hookrightarrow W_x^{s,\9} \hookrightarrow W^{s,p}_x$ for $s<3/2$,
we thus obtain \eqref{sobolevem}.
Hence, by virtue of \eqref{rh4}, \eqref{est-vih3} and \eqref{sobolevem}, we have
\begin{align}\label{wtu-u}
 \norm{\wt u_q-u_q}_{L^\gamma_tW^{s,p}_x} & \lesssim \norm{\sum_i\chi_i(v_i-u_q)}_{L^\9_tH^3_x} \notag\\
 & \lesssim \sup_i \Big(\norm{\chi_i(v_i-u_q)}_{L^\9((\supp(\chi_i);H^3_x)}
    +\norm{(1-\chi_i)(v_{i-1}-u_q)}_{L^\9((\supp(\chi_i\chi_{i-1});H^3_x)}\Big)\notag\\
 	&\lesssim \sup_i |t_{i+1}+\theta_{q+1}-t_i| \||\nabla|\rr_q\|_{L^\9_tH^3_x}\lesssim \mq^{-1}\la^{10}\lesssim \la^{-2}.
\end{align}

Therefore, for $\a\in [5/4,2)$,
by Lemma \ref{totalest},
 \begin{align}\label{lw-est}
 	\norm{ u_{q+1} - u_q }_{L^\gamma_tW^{s,p}_x}
	 &\lesssim \norm{\wt u_q-u_q}_{L^\gamma_tW^{s,p}_x}
 	+\norm{w_{q+1}}_{L^\gamma_tW^{s,p}_x}   \notag\\
 	&\lesssim \la^{-2} + \thq^{-1}\lbb_{q+1}^{s}\rs^{\frac{2}{p}-1}\rp^{\frac{1}{p}-\frac12}\tau^{\frac12-\frac{1}{\gamma}}
 	+\thq^{-30}\sigma^{-1} \notag\\
   &\lesssim \lambda_{q}^{-2}
	+ \lambda_{q+1}^{s+2\a-1-\frac{3}{p}-\frac{4\a-5}{\gamma} +\ve(2+\frac8p-\frac{11}{\gamma}) }
	+ \lbb_{q+1}^{-\ve} ,
 \end{align}
where in the last inequality we also used \eqref{b-beta-ve}, \eqref{larsrp} and Lemma \ref{totalest}.
Taking into account \eqref{e3.1}
we obtain
\begin{align}\label{endpt1-condition}
s+2\a-1-\frac{3}{p}-\frac{4\a-5}{\gamma} +\ve(2+\frac8p-\frac{11}{\gamma})
\leq s+2\a-1-\frac{3}{p}-\frac{4\a-5}{\gamma} +10\ve
<-10\ve,
\end{align}
which yields that
\begin{align}
\norm{ u_{q+1} - u_q }_{L^\gamma_tW^{s,p}_x} \leq \delta_{q+2}^{\frac12}. \label{ne6.6}
\end{align}

Therefore, the iteration estimates \eqref{uh3}, \eqref{upth2}, \eqref{u-B-L2tx-conv}-\eqref{u-B-Lw-conv} are verified.

\section{Reynolds stress for the supercritical regime $\mathcal{A}_1$}   \label{Sec-Rey-Endpt1}

The aim of this section is to verify the inductive estimates \eqref{rh3} and \eqref{rl1}
for the new Reynolds stress $\mathring{R}_{q+1}$
in the supercritical regime $\mathcal{A}_1$ when $\alpha \in [5/4,2)$,
whose borderline in particular includes the endpoint case $(s,\gamma,p)=(3/p+1-2\alpha,\infty, p)$.

\subsection{Decomposition of Reynolds stress}
We derive from \eqref{q+1 velocity} and equation \eqref{equa-nsr}
of $(\wt u_q, \mathring{\wt R}_q)$ that
the new Reynolds stress $\mathring{R}_{q+1}$ satisfies
\begin{align}  \label{ru}
		\displaystyle\div\mathring{R}_{q+1} - \nabla P_{q+1}
		&\displaystyle = \underbrace{\partial_t (w_{q+1}^{(p)}+w_{q+1}^{(c)}) +\nu(-\Delta)^{\alpha} w_{q+1} +\div\big(\wt u_q \otimes w_{q+1} + w_{q+ 1} \otimes \wt u_q \big) }_{ \div\mathring{R}_{lin} +\nabla P_{lin} }   \notag\\
		&\displaystyle\quad+ \underbrace{\div (w_{q+1}^{(p)} \otimes w_{q+1}^{(p)}+  \tr_q)+\partial_t w_{q+1}^{(t)}+\partial_t \wo}_{\div\mathring{R}_{osc} +\nabla P_{osc}}  \notag\\
		&\displaystyle\quad+ \underbrace{\div\Big((w_{q+1}^{(c)}+ w_{q+1}^{(t)}+\wo)\otimes w_{q+1}+ w_{q+1}^{(p)} \otimes (w_{q+1}^{(c)}+ w_{q+1}^{(t)}+\wo) \Big)}_{\div\mathring{R}_{cor} +\nabla P_{cor}}.
\end{align}
Using the inverse divergence operator $\mathcal{R}$
we can choose the Reynolds stress at level $q+1$:
\begin{align}\label{rucom}
	\mathring{R}_{q+1} := \mathring{R}_{lin} +   \mathring{R}_{osc}+ \mathring{R}_{cor},
\end{align}
where the linear error
\begin{align}
	\mathring{R}_{lin} & := \mathcal{R}\(\partial_t (w_{q+1}^{(p)} +w_{q+1}^{(c)}  )\)
	+ \nu \mathcal{R} (-\Delta)^{\a} w_{q+1} + \mathcal{R}\P_H \div \(\wt u_q\mathring{\otimes} w_{q+1} + w_{q+ 1}
	\mathring{\otimes} \wt u_q\), \label{rup}
\end{align}
the oscillation error
\begin{align}\label{rou}
	\mathring{R}_{osc} :=& \sum_{k \in \Lambda } \mathcal{R} \P_H\P_{\neq 0}\left(\g^2 \P_{\neq 0}(W_{(k)}\otimes W_{(k)})\nabla (a_{(k)}^2)\right) \notag\\
	& -\mu^{-1}\sum_{k \in \Lambda }\mathcal{R} \P_H \P_{\neq 0}\(\p_t (a_{(k)}^2\g^2)\psi_{(k_1)}^2\phi_{(k)}^2k_1\)\notag\\
	&-\sigma^{-1}\sum_{k\in \Lambda}\mathcal{R} \P_H \P_{\neq 0}\(h_{(\tau)}\aint_{\T^3}W_{(k)}\otimes W_{(k)}\d x\, \p_t\nabla(a_{(k)}^{2})\),
\end{align}
and the corrector error
\begin{align}
	\mathring{R}_{cor} &
	:= \mathcal{R} \P_H \div \bigg( w^{(p)}_{q+1} \mathring{\otimes} (w_{q+1}^{(c)}+w_{q+1}^{(t)}+\wo)
      + (w_{q+1}^{(c)}+w_{q+1}^{(t)}+\wo) \mathring{\otimes} w_{q+1} \bigg). \label{rup2}
\end{align}
Moreover, one also has  (see, e.g., \cite{bcv21,lzz21})
\begin{align} \label{calRuPHdiv-Ru}
    \mathring{R}_{q+1}  = \mathcal{R}  \mathbb{P}_H \div \mathring{R}_{q+1}.
\end{align}

\subsection{Verification of $L^\9_tH^N_x$-estimates of Reynolds stress}
Regarding the $L^\9_tH^N_x$-estimates \eqref{rh3} and \eqref{rh4}
of the Reynolds stress,
using the identity \eqref{calRuPHdiv-Ru} and
equation \eqref{equa-nsr} for $(u_{q+1}, \mathring R_{q+1})$
we get that for $N=3,4$,
\begin{align}
\norm{\mathring R_{q+1}}_{L^\9_tH^N_x}&\leq  \norm{\mathcal R \P_H (\div \rr_{q+1})}_{L^\9_tH^N_x}\notag \\
&\lesssim \norm{\partial_t u_{q+1}+\div(u_{q+1}\otimes u_{q+1}) +\nu(-\Delta )^{\alpha}u_{q+1}}_{L^\9_tH^{N-1}_x}\notag \\
&\lesssim  \norm{\partial_t u_{q+1}}_{L^\9_tH^{N-1}_x}+\norm{u_{q+1}\otimes u_{q+1}}_{L^\9_tH^N_x} + \norm{u_{q+1}}_{L^\9_tH^{N+3}_x}\notag \\
&\lesssim   \norm{\partial_t u_{q+1}}_{L^\9_tH^{N-1}_x} +\norm{u_{q+1}}_{L^\9_tH^N_x} \norm{u_{q+1}}_{L^\infty_{t,x}} + \norm{u_{q+1}}_{L^\9_tH^{N+3}_x}.\label{ine-rq1h3}
\end{align}

We claim that for every $0\leq \wt N\leq 4$ and for all $q\geq 0$,
\begin{align}\label{uh6}
\norm{u_{q+1}}_{L^\9_tH^{\wt N+3}_x}\lesssim \lambda_{q+1}^{\wt N+5}, \quad
\norm{\p_t u_{q+1}}_{L^\9_tH^{\wt N-1}_x}\lesssim \lambda_{q+1}^{\wt N+5},
\end{align}
where the implicit constant is independent of $q$.

To this end,
in view of \eqref{def-mq-thetaq},  \eqref{nuh3},
\eqref{pdvh3}, \eqref{q+1 velocity} and Lemma~\ref{totalest},
we derive
\begin{align}\label{uhm3}
		\|u_{q+1}\|_{L^\9_tH^{\wt N+3}_x}& \leq \|\wt u_q\|_{L^\9_tH^{\wt N+3}_x}+\|w_{q+1}\|_{L^\9_tH^{\wt N+3}_x} \notag\\
&\lesssim \sup_i\|v_i\|_{L^\9(\supp (\chi_i); H^{\wt N+3}_x)} +\laq^{\wt N+5} \notag\\
&\lesssim \mq^{\frac{\wt N}{2\a}}\la^5+ \laq^{\wt N+5}\notag\\
& \lesssim \lambda_{q+1}^{\wt N+5},
\end{align}
and
\begin{align}\label{ptuhm}
		\|\p_t u_{q+1}\|_{L^\9_tH^{\wt N-1}_x}& \leq \|\p_t \wt u_q\|_{L^\9_tH^{\wt N-1}_x}+\|\p_t w_{q+1}\|_{L^\9_tH^{\wt N-1}_x} \notag\\
&\lesssim \sup_i\|\p_t (\chi_iv_i)\|_{L^\9(\supp( \chi_i); H^{\wt N-1}_x)} +\laq^{\wt N+5}\notag\\
&\lesssim \sup_i( \|\p_t\chi_i\|_{C_t}\| v_i  \|_{L^\9(\supp (\chi_i); H^{\wt N-1}_x)}
  +\|\chi_i\|_{C_t}\| \p_t v_i \|_{L^\9(\supp( \chi_i); H^{\wt N-1}_x)})+ \laq^{\wt N+5} \notag\\
& \lesssim \thq^{-1}\la^5+\mq\la^5+\lambda_{q+1}^{\wt N+5}\lesssim \lambda_{q+1}^{\wt N+5}.
\end{align}
Thus, we prove \eqref{uh6}, as claimed.

Concerning the $L^\9_{t,x}$ estimates of $u_{q+1}$,
using \eqref{nuh3},
the Sobolev embedding $H^2_x\hookrightarrow L^\9_x$
and Lemma~\ref{totalest} we have
\begin{align}\label{ul9}
\norm{u_{q+1}}_{L^\infty_{t,x}} & \leq \norm{\wt u_{q}}_{L^\infty_{t,x}}+ \norm{w_{q+1}}_{L^\infty_{t}H^2_x} \lesssim \norm{\wt u_{q}}_{L^\infty_{t}H^3_x}+ \laq^{4}
\lesssim\la^{5}+ \lambda_{q+1}^{4} \lesssim \lambda_{q+1}^{4},
\end{align}
where the implicit constant is independent of $q$.

Thus, inserting \eqref{verifyuc1}, \eqref{verifyupth2}, \eqref{uh6} and \eqref{ul9} into \eqref{ine-rq1h3}
we lead to
\begin{align*}
\norm{\mathring R_{q+1}}_{L^\9_tH^N_x}
&\lesssim  \lambda_{q+1}^{N+5}+ \lambda_{q+1}^{N+6} + \lambda_{q+1}^{N+5}
\lesssim \lambda_{q+1}^{N+6}
\end{align*}
for some universal constant.
Therefore, the $L^\9_tH^N_x$-estimates \eqref{rh3} and \eqref{rh4} of $\mathring R_{q+1}$ are verified.

\subsection{Verification of $L^1_{t,x}$-decay of Reynolds stress}
Below we mainly verify the delicate $L^1_{t,x}$-decay \eqref{rl1}
of Reynolds stress $\mathring{R}_{q+1}$ at level $q+1$.
Since the building blocks constructed in
\S \ref{Sec-Flow-Endpt1} are highly oscillated and concentrated in space and time,
we need to be careful when estimating
the time/space derivatives in the linear error $\mathring{R}_{lin}$
and the high frequency errors in the oscillation error $\mathring{R}_{osc}$.

Since the Calder\'{o}n-Zygmund operators are bounded in the space $L^\rho_x$, $1<\rho<\infty$,
we choose
\begin{align}\label{defp}
\rho: =\frac{3-8\varepsilon}{3-9\varepsilon}\in (1,2),
\end{align}
where $\ve$ is given by \eqref{e3.1}.
Note that,
\begin{equation}\label{setp}
	(3-8\varepsilon)(1-\frac{1}{\rho})=\ve,
\end{equation}
and
\begin{align}  \label{rs-rp-p-ve}
\rs^{\frac 2\rho-2}\rp^{\frac 1\rho-1} = \lambda^{\ve},\ \
   \rs^{\frac2\rho- 1}\rp^{\frac 1\rho-\frac 12} = \lambda^{-\frac32+5\varepsilon},\ \
   \rs^{\frac 2\rho} \rp^{\frac 1\rho - \frac 32} = \lambda^{-\frac 32+ 3\ve}.
\end{align}

We shall treat the linear error, the oscillation error
and the corrector, separately.

\paragraph{\bf (i) Linear error.}
First, in view of Lemmas \ref{buildingblockestlemma},
\ref{Lem-gk-esti} and \ref{mae-endpt1}, \eqref{e3.1}, \eqref{larsrp} and \eqref{rs-rp-p-ve}, we have
\begin{align}
	 & \| \mathcal{R}\partial_t( w_{q+1}^{(p)}+ w_{q+1}^{(c)})\|_{L_t^1L_x^\rho}  \nonumber \\
	\lesssim& \sum_{k \in \Lambda}\| \mathcal{R} \curl\curl\partial_t(\g a_{(k)} W^c_{(k)}) \|_{L_t^1L_x^\rho} \nonumber \\
	\lesssim& \sum_{k \in \Lambda}\Big(\| \g\|_{L^1_t}(\|  a_{(k)} \|_{C_{t,x}^2}\| W^c_{(k)} \|_{C_t W_x^{1,\rho}}
            +\|  a_{(k)} \|_{C_{t,x}^1}\| \p_t W^c_{(k)} \|_{C_t W^{1,\rho}_x}) \nonumber \\
	&\qquad\qquad+\| \p_t\g\|_{L_t^1}\| a_{(k)} \|_{C_{t,x}^1}\| W^c_{(k)} \|_{C_t W_x^{1,\rho}}\Big)\nonumber \\
    \lesssim& \tau^{-\frac12}(\thq^{-14}\rs^{\frac{2}{\rho}-1}\rp^{\frac{1}{\rho}-\frac{1}{2}}\lambda^{-1}
        +\thq^{-7}\rs^{\frac{2}{\rho} }\rp^{\frac{1}{\rho}-\frac{3}{2}}\mu)
        + \sigma\tau^{\frac12}\thq^{-7}\rs^{\frac{2}{\rho}-1}\rp^{\frac{1}{\rho}-\frac{1}{2}}\lambda^{-1}  \notag\\
	\lesssim& \thq^{-14}(\lambda^{-2\a-\frac \ve2 }+ \lambda^{-\frac \ve2}+\lambda^{2\a-5+13\ve} )
     \lesssim \thq^{-14}\lambda^{-\frac \ve2}.\label{time derivative}
\end{align}

Regarding the viscosity term $(-\Delta)^\alpha w_{q+1}$,
we use \eqref{velocity perturbation} to estimate
\begin{align}
	\norm{\nu \mathcal{R}(-\Delta)^{\alpha} w_{q+1} }_{L_t^1L^\rho_x} \lesssim & \norm{\nu \mathcal{R}(-\Delta)^{\alpha} w_{q+1}^{(p)} }_{L_t^1L^\rho_x}+\norm{ \nu \mathcal{R}(-\Delta)^{\alpha} w_{q+1}^{(c)} }_{L_t^1L^\rho_x}\notag \\
	& +\norm{ \nu\mathcal{R}(-\Delta)^{\alpha} w_{q+1}^{(t)} }_{L_t^1L^\rho_x}+\norm{ \nu\mathcal{R}(-\Delta)^{\alpha} \dqo }_{L_t^1L^\rho_x}.\label{e5.17}
\end{align}
In order to estimate the
right-hand-side above,
we use the interpolation inequality (cf. \cite{BM18}), \eqref{uprinlp-endpt1} and the fact that $2-\alpha \geq  20\va$
to derive
\begin{align}
	\norm{ \nu\mathcal{R}(-\Delta)^{\alpha} w_{q+1}^{(p)} }_{L_t^1L^\rho_x}
    & \lesssim \norm{ |\na|^{2\a-1} w_{q+1}^{(p)} }_{L_t^1L^\rho_x}\notag\\
	& \lesssim  \norm{w_{q+1}^{(p)}}_{L_t^1L^\rho_x} ^{\frac{4-2\a}{3}} \norm{w_{q+1}^{(p)}}_{L_t^1W^{3,\rho}_x} ^{\frac{2\a-1}{3}}\notag\\
	& \lesssim \thq^{-1}\lbb^{2\alpha-1}\rs^{\frac{2}{\rho}-1}\rp^{\frac{1}{\rho}-\frac{1}{2}}\tau^{-\frac12}\lesssim \thq^{-1}\lambda^{-\frac \ve2}.\label{e5.18}
\end{align}
Similarly, by Lemma \ref{totalest},
\begin{align}
	&\norm{\nu \mathcal{R}(-\Delta)^{\alpha} w_{q+1}^{(c)} }_{L_t^1L^\rho_x}
     \lesssim \thq^{-1}\lbb^{2\alpha -1}\rs^{\frac{2}{\rho} }\rp^{\frac{1}{\rho}-\frac{3}{2}}\tau^{-\frac12}\lesssim \thq^{-1}\lambda^{-2\ve},\label{e5.19}\\
	&\norm{\nu \mathcal{R}(-\Delta)^{\alpha} w_{q+1}^{(t)} }_{L_t^1L^\rho_x}
      \lesssim \thq^{-2}\lbb^{2\alpha-1}\mu^{-1}\rs^{\frac{2}{\rho}-2}\rp^{\frac{1}{\rho}-1}\lesssim \thq^{-2}\lambda^{-\ve},\label{e5.20}\\
	&\norm{\nu \mathcal{R}(-\Delta)^{\alpha} \dqo }_{L_t^1L^\rho_x}
	\lesssim \thq^{-30}\sigma^{-1}\lesssim \thq^{-30}\lambda^{-2\ve} \lesssim \lambda_{q+1}^{-\ve}.\label{e5.21}
\end{align}
Hence, combining \eqref{e5.17}-\eqref{e5.21} and the fact that $\thq^{-30}\ll \lambda^{\ve}$ altogether we obtain
\begin{align}  \label{mag viscosity}
	\norm{\nu \mathcal{R}(-\Delta)^{\alpha} w_{q+1} }_{L_t^1L^\rho_x} \lesssim \thq^{-1}\lambda^{-\frac \ve2} .
\end{align}

It remains to treat the nonlinearity in \eqref{rup}.
By the Sobolev embedding $H^3_x\hookrightarrow L^\9_x$,
\eqref{nuh3} and Lemma \ref{totalest},
\begin{align} \label{linear estimate1}
	&\norm{ \mathcal{R}\P_H\div\(w_{q + 1} \otimes  \wt u_q +  \wt u_q \otimes w_{q+1}\) }_{L_t^1L^\rho_x}  \nonumber \\	
    \lesssim\,&\norm{w_{q + 1} \otimes \wt u_q +  \wt u_q \otimes w_{q+1} }_{L_t^1L^\rho_x}  \nonumber \\
	\lesssim\,& \norm{ \wt u_q}_{L^\9_{t}H^3_x} \norm{w_{q+1}}_{L_t^1L^\rho_x} \nonumber \\
	\lesssim\, &\lambda^5_q (\thq^{-1} \rs^{\frac{2}{\rho}-1}\rp^{\frac{1}{\rho}-\frac12} \tau^{-\frac 12}+\thq^{-2}\mu^{-1}\rs^{\frac{2}{\rho}-2}\rp^{\frac{1}{\rho}-1} +\thq^{-9}\sigma^{-1} )
    \lesssim \thq^{-10}\lambda^{-2\ve}.
\end{align}

Therefore, we conclude from \eqref{time derivative}, \eqref{mag viscosity} and \eqref{linear estimate1} that
\begin{align}   \label{linear estimate}
	\norm{\mathring{R}_{lin} }_{L_t^1L^\rho_x}
     & \lesssim \thq^{-14}\lambda^{-\frac \ve2} +\thq^{-1}\lambda^{-\frac \ve2}+\thq^{-10}\lambda^{-2\ve}
       \lesssim \thq^{-14}\lambda^{-\frac \ve2}.
\end{align}

\paragraph{\bf (ii) Oscillation error.}
Now let us treat the delicate oscillation error.
For this purpose, we decompose the oscillation error into three parts:
\begin{align*}
	\mathring{R}_{osc} = \mathring{R}_{osc.1} +  \mathring{R}_{osc.2}+  \mathring{R}_{osc.3},
\end{align*}
where the low-high spatial oscillation error
\begin{align*}
	\mathring{R}_{osc.1}
	&:=   \sum_{k \in \Lambda }\mathcal{R} \P_{H}\P_{\neq 0}\left(\g^2 \P_{\neq 0}(W_{(k)}\otimes W_{(k)} )\nabla (a_{(k)}^2) \right),
\end{align*}
the high temporal oscillation error
\begin{align*}
	\mathring{R}_{osc.2}
	&:= - \mu^{-1} \sum_{k \in \Lambda}\mathcal{R}\P_{H}\P_{\neq 0}\(\p_t (a_{(k)}^2\g^2) \psi_{(k_1)}^2\phi_{(k)}^2k_1\),
\end{align*}
and the low frequency error
\begin{align*}
	\mathring{R}_{osc.3} &
      := -\sigma^{-1}\sum_{k\in \Lambda}\mathcal{R}\P_{H}\P_{\neq 0}
       \(h_{(\tau)}\aint_{\T^3}W_{(k)}\otimes W_{(k)} \d x\, \p_t\nabla(a_{(k)}^{2})\).
\end{align*}

For the low-high spatial oscillation error $\mathring{R}_{osc.1}$, we note that the velocity flows are of
high oscillations
\begin{align*}
\P_{\neq 0}(W_{(k)}\otimes W_{(k)} )=\P_{\geq (\lambda \rs/2)}(W_{(k)}\otimes W_{(k)} ).
\end{align*}
Thus,
we use Lemmas \ref{buildingblockestlemma}, \ref{mae-endpt1}
and apply Lemma \ref{commutator estimate1}
with $a = \nabla (a_{(k)}^2)$ and $f =  \psi_{(k_1)}^2\phi_{(k)}^2$
to get
\begin{align}  \label{I1-esti-endpt1}
	\norm{\mathring{R}_{osc.1} }_{L^1_tL^\rho_x}
	&\lesssim  \sum_{ k \in \Lambda }
	\|\g\|_{L^2_t}^2\norm{|\nabla|^{-1} \P_{\not =0}
		\left(\P_{\geq (\lambda \rs/2)}(W_{(k)}\otimes W_{(k)} )\nabla (a_{(k)}^2)\right)}_{C_tL^\rho_x} \notag \nonumber  \\
	& \lesssim \sum_{ k \in \Lambda } \||\na|^3 (a^2_{(k)})\|_{C_{t,x}}
	\lambda^{-1} \rs^{-1} \norm{ \psi^2_{(k_1)}}_{C_tL^{\rho}_x} \norm{\phi^2_{(k)} }_{C_tL^{\rho}_x}  \nonumber  \\
	& \lesssim \thq^{-23}  \lambda^{-1}  \rs^{\frac{2}{\rho}-3}\rp^{\frac{1}{\rho}-1}.
\end{align}

Moreover, we apply Lemma~\ref{totalest}
and use the large temporal oscillation parameter $\mu$ to balance the high temporal oscillation error
$\mathring{R}_{osc.2}$:
\begin{align}  \label{I2-esti-endpt1}
	\norm{\mathring{R}_{osc.2} }_{L^1_tL_x^\rho}
	&\lesssim  {\mu}^{-1}  \sum_{k\in\Lambda }\norm{\mathcal{R} \P_{H} \P_{\neq 0}\(\p_t (a_{(k)}^2\g^2)\psi_{(k_1)}^2\phi_{(k)}^2k_1\)}_{L^1_tL_x^\rho} \nonumber  \\
	&\lesssim   {\mu}^{-1}  \sum_{k\in\Lambda}
	\( \norm{\p_t (a_{(k)}^2) }_{C_{t,x}}\norm{\g^2 }_{L^1_t}+ \norm{a_{(k)} }_{C_{t,x}}^2\norm{\p_t(\g^2)}_{ L_t^1 } \)
	\norm{\psi_{(k_1)}}_{C_tL^{2\rho}_x}^2\norm{\phi_{(k)}}_{L^{2\rho}_x}^2 \nonumber \\
	&\lesssim (\thq^{-9}+\thq^{-2}\tau\sigma)\mu^{-1}\rs^{\frac{2}{\rho}-2}\rp^{\frac{1}{\rho}-1}   \notag\\
    &  \lesssim  \thq^{-2}\tau\sigma\mu^{-1}\rs^{\frac{2}{\rho}-2}\rp^{\frac{1}{\rho}-1}.
\end{align}

The low frequency error $\mathring{R}_{osc.3} $
can be estimated easily by using \eqref{hk-esti} and \eqref{mag amp estimates},
\begin{align}  \label{I3-esti-endpt1}
	\norm{\mathring{R}_{osc.3} }_{L^1_tL^\rho_x}
    &\lesssim  \sigma^{-1} \sum_{k\in\Lambda}
     \left\|h_{(\tau)}\, \p_t\nabla(a_{(k)}^{2})\right\|_{L^1_tL^\rho_x} \nonumber  \\
	&\lesssim \sigma^{-1} \sum_{k\in\Lambda} \|h_{(\tau)}\|_{C_t}\( \norm{a_{(k)} }_{C_{t,x}} \norm{a_{(k)} }_{C_{t,x}^2} +\norm{a_{(k)} }_{C_{t,x}^1}^2\)\nonumber \\
	&\lesssim \thq^{-15} \sigma^{-1}.
\end{align}

Therefore, combing \eqref{I1-esti-endpt1}-\eqref{I3-esti-endpt1}
altogether and using \eqref{larsrp}, \eqref{rs-rp-p-ve}
and the bound $0<\ve<{(2-\a)}/{20}$
we conclude
\begin{align}
	\label{oscillation estimate}
	\norm{\mathring{R}_{osc}}_{L_t^1L^\rho_x}
    &\lesssim    \thq^{-23}  \lambda^{-1} \rs^{\frac{2}{\rho}-3}\rp^{\frac{1}{\rho}-1}
	+\thq^{-9}\tau\sigma\mu^{-1}\rs^{\frac{2}{\rho}-2}\rp^{\frac{1}{\rho}-1}+\thq^{-15} \sigma^{-1} \notag\\
	&\lesssim  \thq^{-23} \lbb^{-\ve}+\thq^{-2}\lbb^{2\a-4+12\ve}+\thq^{-15} \lbb^{-2\va}\notag \\
    & \lesssim  \thq^{-23}  \lambda^{-\ve}.
\end{align}

\paragraph{\bf (iii) Corrector error.}
As in \cite{lzz21},
we take $p_1,p_2\in(1,\9)$ such that
$$ \frac{1}{p_1}=1-\wt \eta,\quad \frac{1}{p_1}=\frac{1}{p_2}+\frac{1}{2},$$
with $\wt  \eta\leq \ve/(4(3-8\ve))$.
Using H\"older's inequality, Lemma \ref{totalest},
\eqref{Lp decorr vel} and \eqref{e3.41}
we derive
\begin{align}
	\norm{\mathring{R}_{cor} }_{L^1_{t}L^{p_1}_x}
	\lesssim& \norm{ w_{q+1}^{(p)} \otimes (w_{q+1}^{(c)}+ w_{q+1}^{(t)}+\wo) -(w_{q+1}^{(c)}+w_{q+1}^{(t)}+\wo) \otimes w_{q+1} }_{L^1_{t}L^{p_1}_x} \notag \\
	\lesssim& \norm{w_{q+1}^{(c)}+w_{q+1}^{(t)}+\wo }_{L^2_{t}L^{p_2}_x} (\norm{w^{(p)}_{q+1} }_{L^2_{t,x}} + \norm{w_{q+1} }_{L^2_{t,x}})\notag  \\
     \lesssim&  \delta_{q+1}^{\frac 12} ( \thq^{-1}\rs^{\frac{2}{p_2} }\rp^{\frac{1}{p_2}-\frac32}+ \thq^{-2}\mu^{-1} \rs^{\frac{2}{p_2}-2}\rp^{\frac{1}{p_2}-1} \tau^{\frac 12} +\thq^{-9}\sigma^{-1} )    \notag \\
	 \lesssim& \thq^{-9}\delta_{q+1}^\frac 12  (\lambda^{-2\ve+3\wt \eta-8\wt \eta\ve}+\lambda^{-\frac \ve2+3\wt \eta-8\wt \eta\ve}+ \lambda^{-2\ve} ) \lesssim \thq^{-9} \lambda^{-\frac \ve4}, \label{corrector estimate}
\end{align}
where the last step is due to the inequalities
$-\ve/2 + 3\wt \eta - 8\wt \eta \ve \leq -{\ve}/{4}$.

Therefore, combining the estimates \eqref{linear estimate},
\eqref{oscillation estimate},
\eqref{corrector estimate} and the fact that $\thq^{-9}\ll \lambda^{\frac \ve8}$ altogether we conclude
\begin{align} \label{rq1b}
	\|\mathring{R}_{q+1} \|_{L^1_{t,x}}
	&\leq \| \mathring{R}_{lin} \|_{L^1_tL^\rho_{x}} +  \| \mathring{R}_{osc}\|_{L^1_tL^\rho_{x}}
	+  \|\mathring{R}_{cor} \|_{L^1_tL^{p_1}_{x}} \nonumber  \\
	&\lesssim   \thq^{-14}\lambda^{-\frac \ve2}+\thq^{-23}  \lambda^{-\ve} + \thq^{-9} \lambda^{-\frac \ve4}\nonumber  \\
	& \leq  \lambda_{q+1}^{-\ve_R}\delta_{q+2}.
\end{align}
Thus, the $L^1$-estimate \eqref{rl1} of Reynolds stress is verified.

\section{The supercritical regime $\mathcal{A}_2$} \label{Sec-Endpt2}

In this section,
we mainly treat the supercritical regime $\mathcal{A}_2$ when $\alpha \in [1,2)$,
whose borderline in particular includes the other endpoint $(s,\gamma,p)=(2\alpha/\gamma+1-2\alpha, \gamma, \infty)$.

\subsection{Space-time building blocks} \label{Sec-Interm-Flow}
Unlike in the previous endpoint case $(s,\gamma,p)=(3/p+1-2\alpha,\infty,p)$,
$\alpha\in [5/4,2)$,
the building blocks in this case are indexed by four parameters $\rs$, $\lambda$, $\tau$ and $\sigma$:
\begin{equation}\label{larsrp-endpt2}
	\rs := \lambda_{q+1}^{-\a+1-8\varepsilon},\
	 \lambda := \lambda_{q+1},\      \tau:=\lambda_{q+1}^{2\a}, \ \sigma:=\lambda_{q+1}^{2\varepsilon},
\end{equation}
where $\varepsilon$ is given by \eqref{ne3.1}.

Instead of the intermittent jets,
inspired by \cite{cl20.2},
we choose the concentrated Mikado flows defined by
\begin{equation*}
	W_{(k)} :=  \phi_{\rs}( \lambda \rs N_{\Lambda}k\cdot (x-\alpha_k),\lambda \rs N_{\Lambda}k_2\cdot (x-\alpha_k))k_1,\ \  k \in \Lambda,
\end{equation*}
where we keep the same notations $\phi_{\rs}, r_{\rs}, N_{\Lambda}$
and $(k,k_1,k_2)$ as in \S \ref{Sec-Flow-Endpt1}.
We still use the same temporal building blocks $g_{(\tau)}$, $h_{(\tau)}$
as in \eqref{gk},
but with the different choice of parameters $\tau, \sigma$
given by \eqref{larsrp-endpt2}.

It should be mentioned that,
unlike the intermittent jets defined in \S~\ref{Sec-Flow-Endpt1}, the term $\mu t$
and the concentration parameter $\rp$ are not involved in the Mikado flows.
Hence, the Mikado flows provide at most 2D intermittency.
However, the temporal building blocks provide more intermittency,
which achieves even 4D spatial intermittency
when $\a$ is close to 2.

Then, setting
\begin{equation}\label{snp-endpt2}
	\begin{array}{ll}
		&\phi_{(k)}(x) := \phi_{\rs}( \lambda \rs N_{\Lambda}k\cdot (x-\a_k),\lambda \rs N_{\Lambda}k_2\cdot (x-\a_k)), \\
		&\Phi_{(k)}(x) := \Phi_{\rs}( \lambda \rs N_{\Lambda}k\cdot (x-\a_k),\lambda \rs N_{\Lambda}k_2\cdot (x-\a_k)),
	\end{array}
\end{equation}
we may rewrite
\begin{equation}\label{snwd-endpt2}
	W_{(k)} = \phi_{(k)} k_1,\quad  k\in \Lambda.
\end{equation}
The corresponding potential is then defined by
\begin{equation}
	\begin{aligned}
		\label{corrector vector-endpt2}
	 W_{(k)}^c := \frac{1}{\lambda^2N_{ \Lambda }^2}\Phi_{(k)} k_1 .
	\end{aligned}
\end{equation}

We summarize the estimates of spatial building blocks
in Lemma \ref{buildingblockestlemma-endpt2},
which follows from Lemma \ref{buildingblockestlemma}.

\begin{lemma} [Estimates of Mikado flows] \label{buildingblockestlemma-endpt2}
	For any $p \in [1,\infty]$ and $N \in \mathbb{N}$, we have
	\begin{align}
		&\left\|\nabla^{N} \phi_{(k)}\right\|_{L^{p}_{x}}+\left\|\nabla^{N} \Phi_{(k)}\right\|_{L^{p}_{x}}
		\lesssim r_{\perp}^{\frac 2p- 1}  \lambda^{N}, \label{intermittent estimates2-endpt2}
	\end{align}
	where the implicit constants are independent of $\rs,\,\rp,\,\lambda$ and $\mu$. Moreover, it holds that
	\begin{align}
		&\displaystyle \|\nabla^{N}  W_{(k)}\|_{C_t  L^{p}_{x}}
          +\lambda^{2} \|\nabla^{N}  W_{(k)}^c\|_{C_t L^{p}_{x}}\lesssim r_{\perp}^{\frac 2p- 1} \lambda^{N}, \ \ k\in \Lambda. \label{ew-endpt2}
	\end{align}
\end{lemma}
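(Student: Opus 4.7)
The plan is to deduce this lemma directly from Lemma~\ref{buildingblockestlemma} and the basic scaling/rescaling identities for the building blocks. The key observation is that the Mikado flows $W_{(k)}$ in \eqref{snwd-endpt2} are structurally simpler than the intermittent jets: they are obtained by dropping the $\psi_{(k_1)}$ factor (and hence the temporal oscillation and the $\rp$-concentration) from the intermittent jets of \eqref{snwd}. Consequently, there is no time dependence, so the $C_tL^p_x$ norm collapses to a pure spatial $L^p_x$ norm.

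For the first estimate, I observe that the definitions of $\phi_{(k)}$ and $\Phi_{(k)}$ in \eqref{snp-endpt2} are \emph{identical} to those in \eqref{snp}. Hence the bound \eqref{intermittent estimates2} from Lemma~\ref{buildingblockestlemma} applies verbatim and yields
\[
\|\nabla^{N}\phi_{(k)}\|_{L^p_x}+\|\nabla^{N}\Phi_{(k)}\|_{L^p_x}\lesssim r_\perp^{\frac{2}{p}-1}\lambda^N.
\]
At the mechanical level this follows from the standard rescaling $\|\nabla^N\phi_{r_\perp}\|_{L^p(\mathbb{R}^2)}\lesssim r_\perp^{\frac{2}{p}-1-N}$ combined with the chain rule applied to the affine substitution $x\mapsto \lambda r_\perp N_\Lambda (k,k_2)\cdot(x-\alpha_k)$, which produces a factor $(\lambda r_\perp)^N$ from the $N$ derivatives, while the periodization under $\lambda r_\perp N_\Lambda\in\mathbb{N}$ preserves $L^p$ norms up to constants depending only on the geometry of $\Lambda$.

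For the second estimate, since $k_1$ is a constant unit vector and $W_{(k)}=\phi_{(k)}k_1$ has no time dependence, we simply have $\nabla^N W_{(k)}=(\nabla^N\phi_{(k)})k_1$, and therefore
\[
\|\nabla^N W_{(k)}\|_{C_tL^p_x}=\|\nabla^N\phi_{(k)}\|_{L^p_x}\lesssim r_\perp^{\frac{2}{p}-1}\lambda^N.
\]
For the corrector $W_{(k)}^c=(\lambda^2 N_\Lambda^2)^{-1}\Phi_{(k)}k_1$ from \eqref{corrector vector-endpt2}, the prefactor $\lambda^{-2}$ multiplies the $\Phi_{(k)}$-estimate, and since $N_\Lambda$ is a fixed geometric constant we obtain
\[
\lambda^2\|\nabla^N W_{(k)}^c\|_{C_tL^p_x}=N_\Lambda^{-2}\|\nabla^N\Phi_{(k)}\|_{L^p_x}\lesssim r_\perp^{\frac{2}{p}-1}\lambda^N,
\]
which completes the proof. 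There is no genuine obstacle here; the only subtlety worth highlighting is the absence of any temporal factor, so no analogue of the $(\mu r_\perp\lambda/r_\parallel)^M$ loss from \eqref{ew} appears, which is ultimately what allows the parameter choices \eqref{larsrp-endpt2} to evade the constraint \eqref{setdeltat} that blocks the intermittent-jet construction in this regime.
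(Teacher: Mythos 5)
Your proposal is correct and takes essentially the same route as the paper, which states without detail that Lemma~\ref{buildingblockestlemma-endpt2} follows from Lemma~\ref{buildingblockestlemma}. You have simply filled in the (short) chain: the definitions of $\phi_{(k)}$ and $\Phi_{(k)}$ in \eqref{snp-endpt2} coincide with those in \eqref{snp}, so \eqref{intermittent estimates2} gives \eqref{intermittent estimates2-endpt2} directly; and since $W_{(k)}=\phi_{(k)}k_1$ and $W^c_{(k)}=\lambda^{-2}N_\Lambda^{-2}\Phi_{(k)}k_1$ are time-independent, \eqref{ew-endpt2} reduces to the first estimate.
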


\subsection{Velocity perturbations}   \label{Sec-Pert}

We define the amplitudes of the velocity perturbations by
\begin{equation}\label{akb-endpt2}
	a_{(k)}(t,x):= \varrho^{\frac{1}{2} } (t,x) f (t)\gamma_{(k)}
       ({\rm Id}-\frac{\tr_q(t,x)}{\rho(t,x)}), \quad k \in \Lambda,
\end{equation}
where $\varrho, f, \gamma_{(k)}$ are defined as in \S \ref{Subsec-Velo-perturb}.
Note that,
the amplitudes $a_{(k)}$, $k\in \Lambda$,
obey the same estimates as in Lemma \ref{mae-endpt1}.
Namely, we have
\begin{lemma} \label{mae-endpt2}
	For $1\leq N\leq 9$, $k\in \Lambda$, we have
		\begin{align}
			\label{e3.15.2}
			&\norm{a_{(k)}}_{L^2_{t,x}} \lesssim \delta_{q+1}^{\frac{1}{2}} ,\\
			\label{mag amp estimates-endpt2}
			& \norm{ a_{(k)} }_{C_{t,x}} \lesssim \thq^{-1},\ \ \norm{ a_{(k)} }_{C_{t,x}^N} \lesssim \thq^{-7N},
		\end{align}
where the implicit constants are independent of $q$.
	\end{lemma}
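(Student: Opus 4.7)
The proof strategy mirrors that of Lemma \ref{mae-endpt1}, because the amplitudes $a_{(k)}$ in \eqref{akb-endpt2} are defined by exactly the same formula as in \eqref{akb}, and the quantities entering that formula, namely $\varrho$, $f$, $\tr_q$ and $\gamma_{(k)}$, satisfy the same analytic bounds (the bounds \eqref{rhoblp}, \eqref{rhoB-Ctx.1}--\eqref{rhoB-Ctx.3}, \eqref{nrl1}, \eqref{nrh3} on the cut-off and stress, together with the smoothness of $\gamma_{(k)}$ from the Geometric Lemma \ref{geometric lem 2}). Thus one may proceed verbatim as in \cite{lzz21}; I outline the plan below.

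To establish \eqref{e3.15.2}, I would first use \eqref{rhoblp} with $p=1$ together with \eqref{nrl1} to obtain
\[
\|\varrho\|_{L^1_{t,x}} \lesssim \varepsilon_u^{-1}\bigl(\la^{-\ve_R/4}\delta_{q+1} + \|\tr_q\|_{L^1_{t,x}}\bigr) \lesssim \delta_{q+1}.
\]
Since $\gamma_{(k)}$ is smooth and its argument $\mathrm{Id} - \tr_q/\varrho$ stays in the fixed ball where $\gamma_{(k)}$ is bounded (by \eqref{rhor} and the Geometric Lemma), and since $\|f\|_{L^\infty_t}\le 1$, it follows that $|a_{(k)}|^2 \lesssim \varrho$ pointwise. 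Integrating yields $\|a_{(k)}\|_{L^2_{t,x}}^2 \lesssim \|\varrho\|_{L^1_{t,x}} \lesssim \delta_{q+1}$.

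For \eqref{mag amp estimates-endpt2}, the $C_{t,x}$-bound is immediate from $\|a_{(k)}\|_{C_{t,x}}\lesssim \|\varrho^{1/2}\|_{C_{t,x}}\lesssim \theta_{q+1}^{-1}$ via \eqref{rhoB-Ctx.2}. For higher derivatives I would apply the Leibniz rule to the product $\varrho^{1/2} f\, \gamma_{(k)}(\mathrm{Id}-\tr_q/\varrho)$ and the Faà di Bruno formula to the composition $\gamma_{(k)}(\mathrm{Id}-\tr_q/\varrho)$, using \eqref{rhoB-Ctx.1}--\eqref{rhoB-Ctx.3} to control derivatives of $\varrho^{\pm 1/2}$ (each giving a factor $\theta_{q+1}^{-3N}$) and \eqref{nrh3} (restricted to the support of $f$, where $\theta_{q+1}^{-M-N-1}\lambda_q^5 \lesssim \theta_{q+1}^{-M-N-2}$ since $\lambda_q^5 \ll \theta_{q+1}^{-1}$) to control derivatives of $\tr_q$. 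The cut-off $f$ contributes $\|f\|_{C^N_t} \lesssim \theta_{q+1}^{-N}$. Summing all contributions gives a bound of the form $\theta_{q+1}^{-cN}$ with $c\le 7$, yielding \eqref{mag amp estimates-endpt2}.

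The only mildly delicate step is the Faà di Bruno bookkeeping for derivatives of the composition, which requires that the argument $\mathrm{Id}-\tr_q/\varrho$ stay in a compact region of smoothness of $\gamma_{(k)}$; this is guaranteed by the quantitative bound \eqref{rhor}, and the resulting polynomial growth in $N$ is absorbed by the exponent $7N$. No ingredient specific to the spatial building blocks of Section~\ref{Sec-Flow-Endpt1} is used, so the argument applies unchanged in the regime $\mathcal{A}_2$.
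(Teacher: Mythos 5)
Your proposal is correct and takes the same route the paper does: since the amplitude in \eqref{akb-endpt2} is defined by precisely the same formula as \eqref{akb}, and $\varrho$, $f$, $\tr_q$, $\gamma_{(k)}$ are exactly the same objects satisfying \eqref{rhor}--\eqref{rhoB-Ctx.3}, \eqref{nrl1}, \eqref{nrh3}, the conclusion is that of Lemma~\ref{mae-endpt1}, and the paper simply cites this fact (referring, as for Lemma~\ref{mae-endpt1}, to the computation in \cite{lzz21}). Your added sketch — pointwise $|a_{(k)}|^2\lesssim\varrho$ then \eqref{rhoblp}/\eqref{nrl1} for the $L^2$ bound, and Leibniz/Fa\`a di Bruno with \eqref{rhoB-Ctx.1}--\eqref{rhoB-Ctx.3} and \eqref{nrh3} for the $C^N_{t,x}$ bound — is exactly the omitted calculation and the exponent bookkeeping indeed closes with room to spare under $\theta_{q+1}^{-7N}$.
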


Next, we define the principal part $w_{q+1}^{(p)}$ of the velocity perturbations by
\begin{align}
		w_{q+1}^{(p)} &:= \sum_{k \in \Lambda } a_{(k)}\g W_{(k)},
		\label{pv-endpt2}
\end{align}
which satisfies the same algebraic identity as in \eqref{mag oscillation cancellation calculation}.

The corresponding incompressibility corrector is then defined by
	\begin{align}\label{wqc-dqc-endpt2}
		w_{q+1}^{(c)}
		&:=   \sum_{k\in \Lambda }\g (\nabla a_{(k)} \times \curl W_{(k)}^c+ \curl (\nabla a_{(k)} \times W_{(k)}^c)).
	\end{align}
Note that, the incompressibility corrector \eqref{wqc-dqc-endpt2}
is different from the previous one in \eqref{wqc-dqc}.

By straightforward computations,
	\begin{align}
&  w_{q+1}^{(p)} + w_{q+1}^{(c)}=\sum_{k \in \Lambda} \curl \curl ( a_{(k)} \g W_{(k)}^c) , \label{div free velocity}
	\end{align}
which yields immediately that
\begin{align*}
	\div (w_{q+1}^{(p)} + w_{q +1}^{(c)})= 0.
\end{align*}

Regarding the temporal corrector,
because the new spatial building block \eqref{snwd-endpt2} satisfies $\div (W_{(k)}\otimes W_{(k)})=0$,
it is not necessary to introduce the temporal corrector $w_{q+1}^{(t)}$ as in \eqref{veltemcor}
to balance the spatial oscillation.
We will only need the temporal corrector $w_{q+1}^{(o)}$
to balance the high temporal frequency oscillation
in \eqref{mag oscillation cancellation calculation}:
\begin{align}
		& w_{q+1}^{(o)}:= -\sigma^{-1}\sum_{k\in\Lambda }\P_{H}\P_{\neq 0}\(h_{(\tau)}\aint_{\T^3} W_{(k)}\otimes W_{(k)}\d x\nabla (a_{(k)}^2) \) .   \label{wo-endpt2}
\end{align}
Then, by virtue of \eqref{hk} and \eqref{wo-endpt2}
and Leibniz's rule
we see that the algebraic identity \eqref{utemcom} is still valid.

Now, we are ready to define the velocity perturbation $w_{q+1}$ at level $q+1$ by
	\begin{align}
		w_{q+1} &:= w_{q+1}^{(p)} + w_{q+1}^{(c)}+\wo
		\label{velocity perturbation-endpt2}
	\end{align}
and the velocity field at level $q+1$  by
	\begin{align}
		& u_{q+1}:= \wt u_q + w_{q+1},
		\label{q+1 velocity-endpt2}
	\end{align}
where $\wt u_q$ is
the velocity field already prepared in
the gluing stage in \S \ref{Sec-Concen-Rey}.
By the above constructions, $w_{q+1}$ is mean-free and divergence-free.

Analogous to Lemma \ref{totalest}.
we have the estimates of the velocity perturbations below.

\begin{lemma}  [Estimates of perturbations] \label{totalest-endpt2}
	For any $\rho \in(1,\infty), \gamma \in [1,\infty]$ and
	every integer $0\leq N\leq 7$,
we have the following estimates:
	\begin{align}
	&\norm{\na^N w_{q+1}^{(p)} }_{L^ \gamma_tL^\rho_x }  \lesssim \thq^{-1} \lbb^N\rs^{\frac{2}{\rho}-1}\tau^{\frac12-\frac{1}{ \gamma}},\label{uprinlp-endpt2}\\
	&\norm{\na^N w_{q+1}^{(c)} }_{L^\gamma_tL^\rho_x   } \lesssim \thq^{-7}\lbb^{N-1}\rs^{\frac{2}{\rho}-1}\tau^{\frac12-\frac{1}{\gamma}}, \label{ucorlp-endpt2} \\
	&\norm{\na^N \wo }_{L^\gamma_tL^\rho_x  }\lesssim \thq^{-7N-9}\sigma^{-1} ,\label{dcorlp-endpt2}
	\end{align}
where the implicit constants depend only on $N$, $\gamma$ and $\rho$. In particular, for integers $1\leq N\leq 7$, we have
\begin{align}
& \norm{ w_{q+1}^{(p)} }_{L^\9_tH^N_x }  + \norm{ w_{q+1}^{(c)} }_{L^\9_tH^N_x}+\norm{ \wo }_{L^\9_tH^N_x}\lesssim \lambda^{N+2},\label{principal h3 est-endpt2}\\
& \norm{\p_t w_{q+1}^{(p)} }_{L^\9_tH^N_x }  + \norm{\p_t w_{q+1}^{(c)} }_{L^\9_tH^N_x}+\norm{\p_t \wo }_{L^\9_tH^N_x}\lesssim \lambda^{N+5},\label{pth2 est-endpt2}
\end{align}
where the implicit constants are independent of $\lambda$.
\end{lemma}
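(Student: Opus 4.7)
The plan is to mirror the template of Lemma~\ref{totalest}, taking advantage of the simpler structure of the concentrated Mikado flows---both the $\mu t$ temporal shift in $W_{(k)}$ and the parallel concentration parameter $\rp$ are absent here, which collapses all $\rp$-powers to $1$ and removes the need for a spatial temporal corrector $w_{q+1}^{(t)}$.

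First, for the principal part $w_{q+1}^{(p)}=\sum_{k\in\Lambda}a_{(k)}\g W_{(k)}$, I would distribute the $N$ spatial derivatives among $a_{(k)}$ and $W_{(k)}$ via the Leibniz rule, then combine the amplitude bound from Lemma~\ref{mae-endpt2}, the temporal estimate $\|\g\|_{L^\gamma_t}\lesssim \tau^{1/2-1/\gamma}$ from \eqref{gk estimate}, and the Mikado bound $\|\nabla^N W_{(k)}\|_{C_tL^\rho_x}\lesssim \rs^{2/\rho-1}\lambda^N$ from \eqref{ew-endpt2}; this directly yields \eqref{uprinlp-endpt2} with the worst prefactor $\thq^{-1}$ coming from the $C_{t,x}$-bound of $a_{(k)}$. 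For the incompressibility corrector, I would exploit the identity $w^{(p)}_{q+1}+w^{(c)}_{q+1}=\sum_k \curl\curl(a_{(k)}\g W^c_{(k)})$ from \eqref{div free velocity} to reduce the estimate to two extra derivatives acting on $a_{(k)}\g W^c_{(k)}$; these are absorbed either into $W^c_{(k)}$ (giving the $\lambda^{-1}$ gain in \eqref{ucorlp-endpt2} via \eqref{ew-endpt2}) or into $a_{(k)}$ (producing the larger prefactor $\thq^{-7}$). For $\wo$, I would invoke the boundedness of $\P_H\P_{\neq 0}$ on $L^\rho$ for $\rho\in(1,\infty)$, use the bound $\|h_{(\tau)}\|_{C_t}\leq 1$ from \eqref{hk-esti}, and bound $\|\nabla^{N+1}(a_{(k)}^2)\|_{C_{t,x}}$ via Lemma~\ref{mae-endpt2} to obtain \eqref{dcorlp-endpt2}.

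The $L^\infty_tH^N_x$ bounds \eqref{principal h3 est-endpt2} then follow by specializing to $\gamma=\infty$, $\rho=2$ and plugging in the parameter choice \eqref{larsrp-endpt2}: since $\rs^0=1$, $\tau^{1/2}=\laq^{\alpha}$, $\sigma^{-1}=\laq^{-2\va}$, and $\thq^{-M}\ll \laq^{\va}$ by \eqref{ne2.14}, each of the three summands collapses to $\lesssim \laq^{N+2}$ as $\alpha<2$. For the $\partial_t$ bounds in \eqref{pth2 est-endpt2}, the crucial simplification is that $W_{(k)}$ is time-independent, so $\partial_t w^{(p)}_{q+1}$ and $\partial_t w^{(c)}_{q+1}$ only receive contributions from $\partial_ta_{(k)}$ and $\partial_t\g$; the dominant term carries $\|\partial_t\g\|_{L^\infty_t}\lesssim \sigma\tau^{3/2}=\laq^{3\alpha+2\va}$ from \eqref{gk estimate}. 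For $\partial_t \wo$, I would use \eqref{pt-h-gt}, namely $\partial_t(\sigma^{-1}h_{(\tau)})=\g^2-1$, to convert the troublesome $\sigma^{-1}\partial_t h_{(\tau)}$ term into $(\g^2-1)$, whose $L^\infty_t$ norm is $\lesssim \tau$, and then close the estimate against $\|\nabla^{N+1}(a_{(k)}^2)\|_{C_{t,x}}$.

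The main obstacle will be the $\partial_t$ estimates: since $\sigma\tau^{3/2}=\laq^{3\alpha+2\va}$ approaches $\laq^{6}$ as $\alpha \to 2$, one must carefully absorb the $\thq^{-7N-c}$ prefactors using \eqref{ne2.14} and balance spatial versus temporal Leibniz splittings optimally so that the claimed $\laq^{N+5}$ bound is attained throughout the stated range. The downstream oscillation and linear errors (treated in the next section) ultimately only need $L^\infty_tH^N_x$ and $\partial_tL^\infty_tH^N_x$ bounds of the form $\laq^{N+O(1)}$ to propagate the iterative estimates \eqref{uh3}--\eqref{rh4}, so the constants $2$ and $5$ in the statement are calibrated to be just sharp enough to close the induction, in direct analogy with Lemma~\ref{totalest}.
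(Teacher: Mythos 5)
Your proposal mirrors the paper's proof exactly: Leibniz rule combined with Lemmas~\ref{mae-endpt2}, \ref{Lem-gk-esti} and \ref{buildingblockestlemma-endpt2} for the three perturbation pieces, specialization $\gamma=\infty$, $\rho=2$ together with \eqref{larsrp-endpt2} and \eqref{ne2.14} for \eqref{principal h3 est-endpt2}, and the identity \eqref{pt-h-gt} to convert $\sigma^{-1}\partial_t h_{(\tau)}$ into $\g^2-1$ when bounding $\partial_t\wo$. Your unease about whether $\laq^{N+5}$ in \eqref{pth2 est-endpt2} is attainable is in fact well founded: the paper's own final display in the proof reads $\lesssim\lambda^{N+6}$ (matching the analogous \eqref{pth2 est-endpt1}), and the downstream Reynolds-stress estimates in Section~\ref{Subsec-Reynolds-Endpt2} only require the $N+6$ bound, so the exponent $5$ in the lemma statement is a misprint rather than something that ``optimal balancing'' could recover.
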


\begin{proof}
First, using \eqref{gk estimate}, \eqref{ew-endpt2},
\eqref{pv-endpt2} and Lemma~\ref{mae-endpt2}
we get that for any $\rho \in (1,\infty)$,
	\begin{align}\label{uplp}
			 \norm{\nabla^N w_{q+1}^{(p)} }_{L^\gamma_tL^\rho_x }
			\lesssim&  \sum_{k \in \Lambda}
             \sum\limits_{N_1+N_2 = N}
            \|a_{(k)}\|_{C^{N_1}_{t,x}}\|\g\|_{L_t^\gamma}
			\norm{ \nabla^{N_2} W_{(k)} }_{C_tL^\rho_x } \notag \\
		\lesssim&	 \thq^{-1}\lbb^N\rs^{\frac{2}{\rho}-1}\tau^{\frac12-\frac{1}{\gamma}},
	\end{align}
and thus \eqref{uprinlp-endpt2} follows.

Moreover, by \eqref{b-beta-ve},
\eqref{gk estimate}, \eqref{ew-endpt2}, \eqref{wqc-dqc-endpt2} and Lemma \ref{mae-endpt2},
	\begin{align*}
 \norm{\na^N w_{q+1}^{(c)} }_{L^\gamma_tL^\rho_x}	\lesssim&
		\sum\limits_{k\in \Lambda }\|\g\|_{L^\gamma_t}  \sum_{N_1+N_2=N}
         \( \norm{ a_{(k)} }_{C_{t,x}^{N_1+1}} \norm{\na^{N_2} W^c_{(k)}}_{C_tW^{1,\rho}_x }+ \norm{ a_{(k)} }_{C_{t,x}^{N_1+2}} \norm{\na^{N_2} W^c_{(k)}}_{C_tL^{\rho}_x } \)  \nonumber   \\
		 \lesssim & \thq^{-7}\lambda^{N-1}\rs^{\frac{2}{\rho}-1} \tau^{\frac12-\frac{1}{\gamma}},
	\end{align*}
which implies \eqref{ucorlp-endpt2}.
	
We then estimate the temporal corrector $\wo$
by using \eqref{wo-endpt2}, \eqref{hk-esti} and Lemmas \ref{mae-endpt2}:
		\begin{align*}
			\norm{ \na^N \wo }_{L^\gamma_tL^\rho_x }
			\lesssim \sigma^{-1}\sum_{k \in \Lambda}\|h_{(\tau)}\|_{C_{t}} \|\nabla^{N+1} (a^2_{(k)})\|_{C_{t,x}}
			\lesssim  \thq^{-7N-9} \sigma^{-1}.
		\end{align*}

Regarding the $L^\9_tH^N_x$-estimates of velocity perturbations,
using \eqref{ne3.1},
\eqref{larsrp-endpt2},
\eqref{uprinlp-endpt2}-\eqref{dcorlp-endpt2} we get
\begin{align*}
& \norm{ w_{q+1}^{(p)} }_{L^\9_tH^N_x }  + \norm{ w_{q+1}^{(c)} }_{L^\9_tH^N_x}+\norm{ \wo }_{L^\9_tH^N_x}\notag \\
\lesssim &\, \thq^{-1}\lambda^N \tau^{\frac12} +\thq^{-7}\lambda^{N-1} \tau^{\frac12} + \thq^{-7N-9}\sigma^{-1} \notag\\
\lesssim &\,\thq^{-1}\lambda^{\a+N} +\thq^{-7}\lambda^{\a+N-1}+ \thq^{-7N-9} \lambda^{-2\ve} \lesssim \lambda^{N+2},
\end{align*}
which verifies \eqref{principal h3 est-endpt2}.

It remains to prove \eqref{pth2 est-endpt2}. By virtue of \eqref{b-beta-ve}, \eqref{larsrp-endpt2} and Lemmas \ref{Lem-gk-esti}, \ref{buildingblockestlemma-endpt2} and \ref{mae-endpt2}, we get
	\begin{align} \label{wprincipal h2 est.2}
		\norm{\p_t w_{q+1}^{(p)} }_{L^\9_tH^N_x }
		\lesssim&   \sum_{k \in \Lambda }
		\|a_{(k)}\|_{C_{t,x}^{N+1} }
		\norm{ \p_t \g}_{L^\9_t}\norm{ W_{(k)} }_{L^\9_tH^N_x}
        \lesssim \thq^{-7N-7} \lbb^{N} \sigma  \tau^{\frac 32}
	\end{align}
	and
	\begin{align} \label{uc h2 est.2}
 \norm{\p_t w_{q+1}^{(c)} }_{L^\9_tH^N_x  }
		& \lesssim   \sum_{k \in \Lambda }
		\|a_{(k)}\|_{C_{t,x}^{N+3}}
		 \norm{\p_t \g}_{C_{t}}
         (\norm{ W^c_{(k)} }_{L^\9_tH^N_x} + \norm{ \nabla W^c_{(k)} }_{L^\9_tH^N_x} )   \nonumber \\
        & \lesssim \thq^{-7N-21} \sigma  \tau^{\frac 32}  (\lbb^{N-2}+\lbb^{N-1})  \notag \\
		& \lesssim \thq^{-7N-21} \lbb^{N-1}\sigma  \tau^{\frac 32}   .
	\end{align}
Since $\mathbb{P}_H \mathbb{P}_{\not =0}$ is bounded in $H^N_x$,
similarly to \eqref{wo h2 est},
we have
	\begin{align} \label{wo h2 est.2}
		\norm{\p_t w_{q+1}^{(o)} }_{L^\9_tH^N_x  }
        \lesssim \sigma^{-1} \sum_{k \in \Lambda } \|\p_t (h_{(\tau)} \na (a_{(k)}^2) )\|_{L^\9_tH^N_x}
		\lesssim  \thq^{-7N-9}\tau .
	\end{align}
Therefore, taking into account that $\thq^{-7N-21}\leq \lbb^{N\ve/2}$
and $0<\ve\leq (2-\a)/20$ we conclude  that
\begin{align*}
& \norm{\p_t w_{q+1}^{(p)} }_{L^\9_tH^N_x }  + \norm{\p_t  w_{q+1}^{(c)} }_{L^\9_tH^N_x}+\norm{ \p_t \wo }_{L^\9_tH^N_x}\notag \\
\lesssim &\, \thq^{-7N-7}\lambda^N\sigma \tau^{\frac32}
+\thq^{-7N-21}\lambda^{N-1} \sigma\tau^{\frac32}
+ \thq^{-7N-9}\tau \notag\\
\lesssim &\,\thq^{-7N-7}\lambda^{3\a+N+2\ve} +\thq^{-7N-21}\lambda^{3\a+N-1+2\ve}+ \thq^{-7N-9} \lambda^{2\a} \lesssim \lambda^{N+6}.
\end{align*}

Therefore, the proof of Lemma \ref{totalest-endpt2} is complete.
\end{proof}

\paragraph{\bf Verification of the inductive estimates for velocity.}
We apply the $L^p$ decorrelation Lemma~\ref{Decorrelation1} with $f= a_{(k)}$, $g = \g\phi_{(k)}$ and $\sigma = \lambda^{2\ve}$
and then using \eqref{la}, \eqref{b-beta-ve}
and Lemmas \ref{Lem-gk-esti}, \ref{buildingblockestlemma-endpt2}
and \ref{mae-endpt2}
to derive
\begin{align}
	\label{Lp decorr vel-endpt2}
	\norm{w^{(p)}_{q+1}}_{L^2_{t,x}}
	&\lesssim \sum\limits_{k\in \Lambda}
       \Big(\|a_{(k)}\|_{L^2_{t,x}}\norm{ \g }_{L^2_{t}} \norm{ \phi_{(k)}}_{C_tL^2_{x}} +\sigma^{-\frac12}\|a_{(k)}\|_{C^1_{t,x}}\norm{ \g }_{L^2_{t}} \norm{ \phi_{(k)}}_{C_tL^2_{x}}\Big) \notag\\
	&\lesssim  \delta_{q+1}^{\frac{1}{2}}+\thq^{-7}\lambda^{-\ve}_{q+1}   \lesssim \delta_{q+1}^{\frac{1}{2}}.
\end{align}
Then, using \eqref{b-beta-ve}, \eqref{Lp decorr vel-endpt2}
and Lemma \ref{totalest-endpt2} we obtain
\begin{align}  \label{e3.41}
	\norm{w_{q+1}}_{L^2_{t,x}} &\lesssim\norm{w_{q+1}^{(p)} }_{L^2_{t,x}} + \norm{ w_{q+1}^{(c)} }_{L^2_{t,x}}  +\norm{ \wo }_{L^2_{t,x}}\notag \\
	&\lesssim \delta_{q+1}^{\frac{1}{2}} +\thq^{-7}\lambda^{-1}+ \thq^{-9}\sigma^{-1}\lesssim \delta_{q+1}^{\frac{1}{2}},
\end{align}
and
\begin{align}  \label{wql1}
	\norm{w_{q+1}}_{L^1_tL^2_x} &\lesssim\norm{w_{q+1}^{(p)} }_{L^1_tL^2_x} + \norm{ w_{q+1}^{(c)} }_{L^1_tL^2_x}+\norm{ \wo }_{L^1_tL^2_x}\notag \\
	&\lesssim \thq^{-1}\tau^{-\frac12}+\thq^{-7} \lbb^{-1} \tau^{-\frac12} + \thq^{-9}\sigma^{-1}\lesssim \lambda_{q+1}^{-\ve}.
\end{align}

We are now ready to verify the iterative estimates for $u_{q+1}$.

In view of  \eqref{uh3}, \eqref{uuql2}, \eqref{pdvh3}, \eqref{q+1 velocity-endpt2} and \eqref{principal h3 est-endpt2},
we have that for $a$ large enough,
similarly to \eqref{uh6},
\begin{align}
	\norm{u_{q+1}}_{L^\9_tH^3_x}
	& \lesssim\norm{\wt u_q}_{L^\9_tH^3_x}+\norm{w_{q+1}}_{L^\9_tH^3_x} \notag \\
	&\lesssim  \lambda_{q}^5+ \lambda_{q+1}^{5}\lesssim \lambda_{q+1}^5, \label{verifyuc1-endpt2} \\
	 \norm{\p_t u_{q+1}}_{L^\9_tH^2_x}
	 & \lesssim\norm{\p_t \wt u_q}_{L^\9_tH^2_x}+\norm{\p_t w_{q+1}}_{L^\9_tH^2_x} \notag \\
	 &\lesssim   \thq^{-1}\la^{5}+\mq\la^5+ \lambda_{q+1}^{8}\lesssim \lambda_{q+1}^8.   \label{verifyupth2-endpt2}
\end{align}
Moreover, we derive from \eqref{uuql2}, \eqref{e3.41} and \eqref{wql1} that
\begin{align}
	\norm{u_{q} - u_{q+1}}_{L^2_{t,x}} & \leq \norm{ u_{q} -\wt u_q }_{L^2_{t,x}} + \norm{\wt u_q  - u_{q+1}}_{L^2_{t,x}} \nonumber   \\
 &\lesssim  \norm{ u_q - \wt u_q }_{L^\9_tL^2_x}+ \norm{w_{q+1}}_{L^2_{t,x}}  \nonumber  \\
  &\lesssim  \lambda_q^{-3}+\delta_{q+1}^{\frac{1}{2}}   \leq M^*\delta_{q+1}^{\frac{1}{2}}, \label{e3.43}
\end{align}
for $M^*$ sufficiently large and
\begin{align}  \label{uql1l2}
	\norm{u_{q} - u_{q+1}}_{L^1_tL^2_x} 	
	&\lesssim \norm{ u_q -\wt u_q }_{L^\9_tL^2_x}+ \norm{w_{q+1}}_{L^1_tL^2_x} \nonumber \\
	&\lesssim \lambda_q^{-3}+\lambda_{q+1}^{-\ve} \leq \delta_{q+2}^{\frac{1}{2}}.
\end{align}

Concerning the iterative estimate \eqref{u-B-Lw-conv},
let us first show the embedding
\begin{align}\label{sobolevem2}
H^3_x\hookrightarrow W^{s,p}_x.
\end{align}

In order to prove \eqref{sobolevem2},
when $(s,p,\gamma)\in \mathcal{A}_2$,
we see that for $1\leq p\leq 2$,
\begin{align}\label{e7.9}
0\leq s< 2\a+\frac{2\a-2}{p}+(1-2\a)\leq 2\a-1<3.
\end{align}
which, via the embedding $H^3_x\hookrightarrow H_x^s \hookrightarrow W^{s,p}_x$,
yields \eqref{sobolevem2}.
Moreover, for $p>2$,
since $\alpha<2$, $s\geq 0$,
\begin{align}\label{e7.10}
0<1+\frac{2\a-2}{p}-s<1+\frac{2}{p}-\frac{2}{3}s,
\end{align}
which implies that $3/2>s-3/p$,
thereby yielding \eqref{sobolevem2}
by the Sobolev embedding.
We thus prove \eqref{sobolevem2}.

Thus, similar to \eqref{wtu-u}, by virtue of \eqref{nrh3}, \eqref{est-vih3} and \eqref{sobolevem2}, we have
\begin{align*}
 \norm{\wt u_q-u_q}_{L^\gamma_tW^{s,p}_x} & \lesssim \norm{\sum_i\chi_i(v_i-u_q)}_{L^\9_tH^3_x}\lesssim \la^{-2}.
\end{align*}
Therefore, for any $\a\in [1,2)$,
using \eqref{b-beta-ve}, \eqref{larsrp-endpt2}, \eqref{sobolevem2}
and Lemma \ref{totalest-endpt2},
we derive
\begin{align}\label{lw-est.2}
 	\norm{ u_{q+1} - u_q }_{L^\gamma_tW^{s,p}_x}
	 &\lesssim  \norm{\wt u_q-u_q}_{L^\gamma_tW^{s,p}_x}
 	+\norm{w_{q+1}}_{L^\gamma_tW^{s,p}_x} \notag\\
 	&\lesssim  \la^{-2}+ \thq^{-1}\laq^{s}\rs^{\frac{2}{p}-1}\tau^{\frac12-\frac{1}{\gamma}}
 	+\thq^{-30}\sigma^{-1} \notag\\
    &\lesssim   \la^{-2}+ \lambda_{q+1}^{s+2\a-1-\frac{2\a}{\gamma}-\frac{2\a-2}{p}+\ve(9-\frac{16}{p}) }
	+ \lbb_{q+1}^{-\ve} .
\end{align}
Taking into account that, by \eqref{ne3.1},
\begin{align}\label{endpt2-condition}
s+2\a-1-\frac{2\a}{\gamma}-\frac{2\a-2}{p}+\ve(9-\frac{16}{p})
\leq s+2\a-1-\frac{2\a}{\gamma}-\frac{2\a-2}{p}+9\ve   <-10\ve,
\end{align}
we thus obtain
\begin{align}
\norm{ u_{q+1} - u_q }_{L^\gamma_tW^{s,p}_x} \leq \delta_{q+2}^{\frac12}. \label{nne6.6}
\end{align}

Therefore, the iterative estimates \eqref{uh3}, \eqref{upth2}, \eqref{u-B-L2tx-conv} and \eqref{u-B-Lw-conv} are verified.

\subsection{Reynolds stress} \label{Subsec-Reynolds-Endpt2}
Below we treat the Reynolds stress for the endpoint case
$(2\alpha/\gamma+1-2\alpha, \gamma, \infty)$.
We derive from equation \eqref{equa-nsr} at level $q+1$ that
the new Reynolds stress satisfies the equation
\begin{align}
		\displaystyle\div\mathring{R}_{q+1} - \nabla P_{q+1}
		&\displaystyle = \underbrace{\partial_t (w_{q+1}^{(p)}+w_{q+1}^{(c)}) +\nu(-\Delta)^{\alpha} w_{q+1} +\div\big(\wt u_q \otimes w_{q+1} + w_{q+ 1} \otimes \wt u_q \big) }_{ \div\mathring{R}_{lin} +\nabla P_{lin} }   \notag\\
		&\displaystyle\quad+ \underbrace{\div (w_{q+1}^{(p)} \otimes w_{q+1}^{(p)}+  \tr_q)+\partial_t \wo}_{\div\mathring{R}_{osc} +\nabla P_{osc}}  \notag\\
		&\displaystyle\quad+ \underbrace{\div\Big((w_{q+1}^{(c)} +\wo)\otimes w_{q+1}+ w_{q+1}^{(p)} \otimes (w_{q+1}^{(c)} +\wo) \Big)}_{\div\mathring{R}_{cor} +\nabla P_{cor}}. \label{ru-endpt2}
\end{align}
Then, using the inverse divergence operator $\mathcal{R}$
we can choose the Reynolds stress at level $q+1$ by
\begin{align}\label{rucom-endpt2}
	\mathring{R}_{q+1} := \mathring{R}_{lin} +   \mathring{R}_{osc}+ \mathring{R}_{cor},
\end{align}
where the linear error
\begin{align}
	\mathring{R}_{lin} & := \mathcal{R}\(\partial_t (w_{q+1}^{(p)} +w_{q+1}^{(c)}  )\)
	+ \nu \mathcal{R} (-\Delta)^{\a} w_{q+1} + \mathcal{R}\P_H \div \(\wt u_q \mathring{\otimes} w_{q+1} + w_{q+ 1}
	\mathring{\otimes} \wt u_q\), \label{rup-endpt2}
\end{align}
the oscillation error
\begin{align}\label{rou}
	\mathring{R}_{osc} :=& \sum_{k \in \Lambda } \mathcal{R} \P_H\P_{\neq 0}\left(\g^2 \P_{\neq 0}(W_{(k)}\otimes W_{(k)})\nabla (a_{(k)}^2)\right) \notag\\
	&-\sigma^{-1}\sum_{k\in \Lambda}\mathcal{R} \P_H \P_{\neq 0}\(h_{(\tau)}\aint_{\T^3}W_{(k)}\otimes W_{(k)}\d x\p_t\nabla(a_{(k)}^{2})\),
\end{align}
and the corrector error
\begin{align}
	\mathring{R}_{cor} &
	:= \mathcal{R} \P_H \div \bigg( w^{(p)}_{q+1} \mathring{\otimes} (w_{q+1}^{(c)} +\wo)
      + (w_{q+1}^{(c)} +\wo) \mathring{\otimes} w_{q+1} \bigg). \label{rup2}
\end{align}

In the following we verify the inductive estimates for the new Reynolds stress $\mathring R_{q+1}$. \\

\paragraph{\bf Verification of $L^\9_tH^N_x$-estimate of Reynolds stress.}
Since by \eqref{principal h3 est-endpt2} and \eqref{pth2 est-endpt2},
the velocity perturbations obey the same upper bounds as in
\eqref{principal h3 est-endpt1} and \eqref{pth2 est-endpt1},
we can argue as in a similar manner as in \eqref{ine-rq1h3}-\eqref{ul9}
and obtain \eqref{rh3} and \eqref{rh4} in the supercritical regime
$\mathcal{A}_2$. The details are omitted here. \\

\paragraph{\bf Verification of $L^1_{t,x}$-decay of Reynolds stress}
We aim to verify the $L^1_{t,x}$-decay \eqref{rl1} of the Reynolds stress $\mathring{R}_{q+1}$ at level $q+1$.
In this case, we choose
\begin{align}\label{defp}
\rho: =\frac{2\a-2+16\varepsilon}{2\a-2+14\varepsilon}\in (1,2),
\end{align}
where $\ve$ is given by \eqref{ne3.1}.
Then, we have
\begin{equation}\label{setp}
	(1-\a-8\ve)(\frac{2}{\rho}-1)=1-\a-6\ve,
\end{equation}
and
\begin{align}  \label{rs-rp-p-ve-endpt2}
\rs^{\frac 2\rho-1} =  \lambda^{1-\a-6\ve}.
\end{align}

\paragraph{\bf (i) Linear error.}
Note that, by Lemmas \ref{Lem-gk-esti},
\ref{buildingblockestlemma-endpt2}
and \ref{mae-endpt2}, \eqref{ne3.1}, \eqref{larsrp-endpt2}, \eqref{div free velocity}
and \eqref{rs-rp-p-ve-endpt2},
\begin{align}
	 & \| \mathcal{R}\partial_t( w_{q+1}^{(p)}+ w_{q+1}^{(c)})\|_{L_t^1L_x^\rho}  \nonumber \\
	\lesssim& \sum_{k \in \Lambda}\| \mathcal{R} \curl\curl\partial_t(\g a_{(k)} W^c_{(k)}) \|_{L_t^1L_x^\rho} \nonumber \\
	\lesssim& \sum_{k \in \Lambda}\Big(\| \g\|_{L^1_t}\|  a_{(k)} \|_{C_{t,x}^2}\| W^c_{(k)} \|_{C_t W_x^{1,\rho}}+\| \p_t\g\|_{L_t^1}\| a_{(k)} \|_{C_{t,x}^1}\| W^c_{(k)} \|_{C_t W_x^{1,\rho}}\Big)\nonumber \\
    \lesssim& \thq^{-14}\tau^{-\frac12}\rs^{\frac{2}{\rho}-1}\lambda^{-1} + \thq^{-7}\sigma\tau^{\frac12}\rs^{\frac{2}{\rho}-1}\lambda^{-1}
	\lesssim  \thq^{-7}\lambda^{-4\ve}.\label{time derivative-endpt2}
\end{align}
For the viscosity term, by \eqref{velocity perturbation-endpt2},
\begin{align}
	\norm{ \nu\mathcal{R}(-\Delta)^{\alpha} w_{q+1} }_{L_t^1L^\rho_x} \lesssim & \norm{ \nu\mathcal{R}(-\Delta)^{\alpha} w_{q+1}^{(p)} }_{L_t^1L^\rho_x}+\norm{ \nu \mathcal{R}(-\Delta)^{\alpha} w_{q+1}^{(c)} }_{L_t^1L^\rho_x}+\norm{ \nu\mathcal{R}(-\Delta)^{\alpha} \dqo }_{L_t^1L^\rho_x}.\label{e5.17.2}
\end{align}
Note that,
by the interpolation estimate, \eqref{larsrp-endpt2},
\eqref{uprinlp-endpt2}, \eqref{rs-rp-p-ve-endpt2}
and the fact that $2-\alpha \geq  5\va$,
\begin{align}
	\norm{ \nu\mathcal{R}(-\Delta)^{\alpha} w_{q+1}^{(p)} }_{L_t^1L^\rho_x}
    & \lesssim \norm{ |\na|^{2\a-1} w_{q+1}^{(p)} }_{L_t^1L^\rho_x}\notag\\
	& \lesssim  \norm{w_{q+1}^{(p)}}_{L_t^1L^\rho_x} ^{\frac{4-2\a}{3}} \norm{w_{q+1}^{(p)}}_{L_t^1W^{3,\rho}_x} ^{\frac{2\a-1}{3}}\notag\\
	& \lesssim \thq^{-1}\lbb^{2\alpha-1}\rs^{\frac{2}{\rho}-1}\tau^{-\frac12}\lesssim \thq^{-1}\lambda^{-6\ve}.\label{e5.18.2}
\end{align}
Similarly, by Lemma \ref{totalest-endpt2},
\begin{align}
	&\norm{ \nu\mathcal{R}(-\Delta)^{\alpha} w_{q+1}^{(c)} }_{L_t^1L^\rho_x}
     \lesssim \thq^{-7}\lbb^{2\alpha-2}\rs^{\frac{2}{\rho}-1}\tau^{-\frac12}\lesssim \thq^{-7}\lambda^{-1-6\ve},\label{e5.19.2}\\
	&\norm{\nu \mathcal{R}(-\Delta)^{\alpha} \dqo }_{L_t^1L^\rho_x}   \lesssim \thq^{-30}\sigma^{-1}\lesssim \thq^{-30}\lambda^{-2\ve}.\label{e5.21.2}
\end{align}
Hence, combining \eqref{e5.17.2}-\eqref{e5.21.2} altogether we obtain
\begin{align}  \label{mag viscosity-endpt2}
	\norm{ \nu\mathcal{R}(-\Delta)^{\alpha} w_{q+1} }_{L_t^1L^\rho_x} \lesssim \thq^{-30}\lambda^{-2\ve} .
\end{align}

Moreover,
using \eqref{nuh3}, Lemma \ref{totalest-endpt2}
and \eqref{rs-rp-p-ve-endpt2} we have
\begin{align} \label{linear estimate1-endpt2}
	&\norm{ \mathcal{R}\P_H\div\(w_{q + 1} \otimes \wt u_q + \wt u_q \otimes w_{q+1}\) }_{L_t^1L^\rho_x}  \nonumber \\	
    \lesssim\,&\norm{w_{q + 1} \otimes \wt u_q + \wt u_q \otimes w_{q+1} }_{L_t^1L^\rho_x}  \nonumber \\
	\lesssim\,& \norm{\wt u_q}_{L^\9_tH^3_x} \norm{w_{q+1}}_{L_t^1L^\rho_x} \nonumber \\
	\lesssim\, &\lambda^5_q (\thq^{-1} \rs^{\frac{2}{\rho}-1} \tau^{-\frac 12} +\thq^{-9}\sigma^{-1} )
    \lesssim \thq^{-10}\lambda^{-2\ve}.
\end{align}

Therefore,
we conclude from \eqref{time derivative-endpt2}, \eqref{mag viscosity-endpt2} and \eqref{linear estimate1-endpt2} that
\begin{align}   \label{linear estimate-endpt2}
	\norm{\mathring{R}_{lin} }_{L_t^1L^\rho_x}
     & \lesssim \thq^{-7}\lambda^{-4\ve} +\thq^{-30}\lambda^{-2\ve}+\thq^{-10}\lambda^{-2\ve}
       \lesssim \thq^{-30}\lambda^{-2\ve}.
\end{align}

\paragraph{\bf (ii) Oscillation error.}
Unlike in the previous endpoint case,
we only need to  decompose the oscillation error into two parts here:
\begin{align*}
	\mathring{R}_{osc} = \mathring{R}_{osc.1} +  \mathring{R}_{osc.2},
\end{align*}
where the low-high spatial oscillation error
\begin{align*}
	\mathring{R}_{osc.1}
	&:=   \sum_{k \in \Lambda }\mathcal{R} \P_{H}\P_{\neq 0}\left(\g^2 \P_{\neq 0}(W_{(k)}\otimes W_{(k)} )\nabla (a_{(k)}^2) \right),
\end{align*}
and the low frequency error
\begin{align*}
	\mathring{R}_{osc.2} &
      := -\sigma^{-1}\sum_{k\in \Lambda}\mathcal{R}\P_{H}\P_{\neq 0}
       \(h_{(\tau)}\aint_{\T^3}W_{(k)}\otimes W_{(k)} \d x\, \p_t\nabla(a_{(k)}^{2})\).
\end{align*}

Then, applying Lemmas \ref{buildingblockestlemma-endpt2}, \ref{mae-endpt2} and \ref{commutator estimate1}
with $a = \nabla (a_{(k)}^2)$ and $f =  \phi_{(k)}^2$
we get
\begin{align}  \label{I1-esti}
	\norm{\mathring{R}_{osc.1} }_{L^1_tL^\rho_x}
	&\lesssim  \sum_{ k \in \Lambda }
	\|\g\|_{L^2_t}^2\norm{|\nabla|^{-1} \P_{\not =0}
		\left(\P_{\geq (\lambda \rs/2)}(W_{(k)}\otimes W_{(k)} )\nabla (a_{(k)}^2)\right)}_{C_tL^\rho_x} \notag \nonumber  \\
	& \lesssim \sum_{ k \in \Lambda }
	   \||\na|^3 (a^2_{(k)})\|_{C_{t,x}}  \lambda^{-1} \rs^{-1}\norm{\phi^2_{(k)} }_{C_tL^{\rho}_x}  \nonumber  \\
	& \lesssim \thq^{-23}  \lambda^{-1}  \rs^{\frac{2}{\rho}-3}.
\end{align}

Moreover, as in \eqref{I3-esti-endpt1},
the low frequency part $\mathring{R}_{osc.2} $
can be estimated by using \eqref{hk-esti} and \eqref{mag amp estimates-endpt2}:
\begin{align}  \label{I3-esti}
	\norm{\mathring{R}_{osc.2} }_{L^1_tL^\rho_x}
    \lesssim \sigma^{-1} \sum_{k\in\Lambda} \|h_{(\tau)}\|_{C_t}\( \norm{a_{(k)} }_{C_{t,x}} \norm{a_{(k)} }_{C_{t,x}^2} +\norm{a_{(k)} }_{C_{t,x}^1}^2\)
	\lesssim \thq^{-15} \sigma^{-1}.
\end{align}

Therefore, combing \eqref{I1-esti} and \eqref{I3-esti} altogether and using \eqref{larsrp-endpt2} and \eqref{rs-rp-p-ve-endpt2}
we conclude
\begin{align}
	\label{oscillation estimate-endpt2}
	\norm{\mathring{R}_{osc}}_{L_t^1L^\rho_x}
    &\lesssim   \thq^{-23}  \lambda^{-1} \rs^{\frac{2}{\rho}-3}+\thq^{-15} \sigma^{-1} \notag \\
	&\lesssim \thq^{-23} \lbb^{\a-2+10\ve} +\thq^{-15} \lbb^{-2\va} \notag\\
	&\lesssim \thq^{-15} \lbb^{-2\va},
\end{align}
where the last step is due to \eqref{ne3.1}.

\paragraph{\bf (iii) Corrector error.}
We use H\"older's inequality, Lemma \ref{totalest-endpt2}
and \eqref{rs-rp-p-ve-endpt2} to get
\begin{align}
	\norm{\mathring{R}_{cor} }_{L^1_{t}L^{\rho}_x}
	\lesssim& \norm{ w_{q+1}^{(p)} \otimes (w_{q+1}^{(c)} +\wo) -(w_{q+1}^{(c)} +\wo) \otimes w_{q+1} }_{L^1_{t}L^{\rho}_x} \notag \\
	\lesssim& \norm{w_{q+1}^{(c)} +\wo }_{L^2_{t}L^{\9}_x} (\norm{w^{(p)}_{q+1} }_{L^2_{t}L^{\rho}_x} + \norm{w_{q+1} }_{L^2_{t}L^{\rho}_x})\notag  \\
     \lesssim&  \( \thq^{-7}\lbb^{-1}\rs^{-1 }+\thq^{-9}\sigma^{-1}\) \(\thq^{-1} \rs^{\frac{2}{\rho}-1}  +\thq^{-7}\lbb^{-1} \rs^{\frac{2}{\rho}-1}  + \thq^{-9} \sigma^{-1}\) \notag \\
      \lesssim&  \( \thq^{-7}\lbb^{\a-2+8\ve}+\thq^{-9}\lambda^{-2\ve}\) \(\thq^{-1} \lbb^{-\a+1-6\ve} + \thq^{-9}\lambda^{-2\ve}\) \notag \\
	 \lesssim& \, \thq^{-18} \lambda^{-4\ve}. \label{corrector estimate-endpt2}
\end{align}

Therefore,
we conclude from  estimates \eqref{linear estimate-endpt2},
\eqref{oscillation estimate-endpt2},
\eqref{corrector estimate-endpt2} that
\begin{align} \label{rq1b}
	\|\mathring{R}_{q+1} \|_{L^1_{t,x}}
	&\leq \| \mathring{R}_{lin} \|_{L^1_tL^\rho_{x}} +  \| \mathring{R}_{osc}\|_{L^1_tL^\rho_{x}}
	+  \|\mathring{R}_{cor} \|_{L^1_tL^\rho_{x}}  \nonumber  \\
	&\lesssim   \thq^{-30}\lambda^{-2\varepsilon}
	  +\thq^{-15}\lambda^{-2\ve} + \thq^{-18}\lambda^{-4\ve} \nonumber  \\
	& \leq \lambda^{-\ve_R} \delta_{q+2}.
\end{align}
This justifies the inductive estimate \eqref{rl1}
for the $L^1_{t,x}$-norm of the new Reynolds stress $\mathring{R}_{q+1}$.

\section{Proof of main results}  \label{Sub-Proof-Main}

This section contains the proofs of the main results,
i.e. Theorems \ref{Prop-Iterat} and \ref{Thm-Non-hyper-NSE},
Corollaries \ref{Cor-Strong-Nonuniq} and \ref{Cor-Nonuniq-Supercri},
and the strong vanishing viscosity result in Theorem \ref{Thm-hyperNSE-Euler-limit}.

\paragraph{\bf Proof of Theorem \ref{Prop-Iterat}}

Because the iterative estimates \eqref{uh3}-\eqref{rl1} and \eqref{u-B-L2tx-conv}-\eqref{u-B-Lw-conv}
have been verified in the previous sections,
we only need to prove the well-preparedness of $(u_{q+1}, \rr_{q+1})$ and the temporal inductive inclusion \eqref{suppru}.

Regarding the well-preparedness of $(u_{q+1}, \rr_{q+1})$,
we first note from the support of $a_{(k)}$ that
\begin{align*}
w_{q+1}(t)=0 \quad \text{if} \quad \operatorname{dist}(t,I_{q+1}^c)\leq \thq.
\end{align*}
Therefore, $u_{q+1}(t)=\wt u_q(t)$ if $\operatorname{dist}(t,I_{q+1}^c)\leq \thq$.
Then, by the well-preparedness of $(\wt u_q, \tr_q)$,
we infer that
\begin{align*}
\rr_{q+1}(t)=\tr_q(t)=0 \quad \text{if} \quad \operatorname{dist}(t,I_{q+1}^c)\leq \thq,
\end{align*}
which verifies the well-preparedness of $(u_{q+1}, \rr_{q+1})$.

Concerning the temporal inductive inclusion \eqref{suppru},
first note that
\begin{align}
	& \supp_t w_{q+1} \subseteq \bigcup_{k\in \Lambda }\supp_t a_{(k)} \subseteq N_{2\thq}(\supp_t \tr_{q}).  \label{e4.43}
\end{align}

Next we prove that
\begin{align}
	& I_{q+1}\subseteq N_{4T/\mq}(\supp_t \rr_{q}). \label{ne4.43}
\end{align}

To this end, for any $t\in I_{q+1}$,
we have $t\in [t_i-2\thq,t_i+3\thq]$ for some $i\in \mathcal{C}$.
Hence, there exists $t_*\in [t_{i-1}, t_i+\thq]$
such that $\rr_q(t_*)\neq 0$. Since
\begin{align*}
  |t-t_*|\leq \frac{T}{\mq}+3\thq < \frac{4T}{\mq},
\end{align*}
we infer that $t\in N_{4T/\mq}(\supp_t \rr_{q})$,
which proves \eqref{ne4.43}, as claimed.

Combining \eqref{e4.43} and \eqref{ne4.43} together we obtain
\begin{align}
	& \supp_t w_{q+1} \subseteq  N_{2\thq}(\supp_t \tr_{q})
	\subseteq  N_{2\thq}(I_{q+1})\subseteq  N_{6T/\mq}(\supp_t \rr_{q}).  \label{e4.45}
\end{align}

Regarding the temporal support of $\wt u_{q}$, we claim that
\begin{align}\label{suppwtuq}
\supp_t \wt u_{q}\subseteq N_{2T/\mq}(\supp_tu_{q}).
\end{align}
To this end, for any $t\in [0,T]$ such that $\wt u_q(t)\neq 0$,
there exists $0\leq i\leq \mq-1$ such that $t\in [t_i,t_{i+1}]$.
If $t\in [t_i+\thq,t_{i+1}]$, then we have $u_q(t_i)\neq 0$,
otherwise $\wt u_q(t)=v_i(t)=0$.
Since $|t-t_i|\leq T/\mq$,
we see that \eqref{suppwtuq} is valid.
Moreover, if $t\in [t_i, t_i+\thq]$,
we have $u_q(t_i)\neq 0$ or $u_q(t_{i-1})\neq 0$.
Actually, if $u_q(t_i) = u_q(t_i) =0$,
the uniqueness in Proposition \ref{Prop-LWP-Hyper-NLSE}
yields that
$v_i(t) = v_{i-1}(t)=0$,
and so
\begin{align*}
\wt u_q(t) =\chi_i(t) v_i(t)+(1-\chi_i(t))v_{i-1}(t)=0,
\end{align*}
which contradicts the fact that $\wt u_q(t)\not = 0$.
Hence, taking into account
$|t-t_i|\leq T/\mq$ and $|t-t_{i-1}|\leq 2(T/\mq)$
we prove \eqref{suppwtuq}, as claimed.

Thus, we deduce from \eqref{e4.45} and \eqref{suppwtuq} that
\begin{align}
	 \supp_t u_{q+1}
	 \subseteq \supp_t \wt u_{q} \cup \supp_t w_{q+1}
	\subseteq N_{6T/\mq}( \supp_t (u_{q}, \rr_{q}))
	\subseteq N_{\delta_{q+2}^{\frac 12}} ((\supp_t(u_{q}, \rr_{q})), \label{suppbq}
\end{align}
where the last step is due to
$6T/\mq \ll \delta_{q+2}^{1/2}$.

Moreover,
in view of  \eqref{e4.43}, \eqref{e4.45} and \eqref{suppwtuq},
we get
\begin{align}
	& \supp_t \rr_{q+1}\subseteq \bigcup\limits_{k\in \Lambda}\supp_t a_{(k)}
             \cup \supp_t \wt u_q \subseteq  N_{\delta_{q+2}^{\frac12}}( \supp_t (u_{q}, \rr_{q})). \label{supp-Ru-RB-q+1}
\end{align}
Therefore, putting \eqref{suppbq} and \eqref{supp-Ru-RB-q+1}
altogether we prove Theorem \ref{Prop-Iterat}.
\hfill $\square$ \\

\paragraph{\bf Proof of Theorem \ref{Thm-Non-hyper-NSE}}  \label{Proof-Nonuniq}
Below we prove the statements $(i)$-$(iv)$ in Theorem~\ref{Thm-Non-hyper-NSE} below.

$(i)$. Take $u_0=\tilde{u}$ and set
\begin{align}
	&\mathring{R}_0 :=\mathcal{R}\(\p_t u_0+\nu(-\Delta)^{\alpha} u_0\) + u_0\mathring\otimes u_0, \label{r0u}  \\
	& P_0 := -\frac{1}{3} |u_0|^2.
\end{align}
Thus, $(u_0, \rr_0)$ is a well-prepared solution to \eqref{equa-nsr}
with the set $I_0 = [0,T]$ and the length scale $\theta_0=T$.
Let $\delta_{1}:= \|\mathring{R}_0 \|_{L^1_{t,x}}$
and choose $a$ sufficiently large such that \eqref{uh3}-\eqref{rl1} are satisfied at level $q=0$.
Then, in view of Theorem~\ref{Prop-Iterat}, there exists a sequence of solutions $\{u_{q},\rr_{q}\}_{q}$ to \eqref{equa-nsr} satisfying the inductive estimates \eqref{uh3}-\eqref{suppru} for all $q\geq 0$.

Note that, by \eqref{uh3} and \eqref{upth2},
\begin{align}\label{ine-uuqh1}
\norm{ u_{q+1} - u_q }_{H^1_{t,x}} & \leq \norm{\p_t( u_{q+1} - u_q) }_{L^\9_tH^2_{x}}+ \norm{ u_{q+1} - u_q }_{L^\9_tH^3_{x}} \notag\\
 & \leq \norm{\p_t u_{q+1}}_{L^\9_tH^2_{x}}+\norm{\p_t u_{q}}_{L^\9_tH^2_{x}}+\norm{u_{q+1}}_{L^\9_tH^3_{x}}+ \norm{ u_{q}}_{L^\9_tH^3_{x}}\notag\\
  &\lesssim \laq^{8}+\la^8+\laq^5+\la^5\lesssim \laq^{8}.
\end{align}
Then, using the interpolation, \eqref{la}, \eqref{uh3},
\eqref{u-B-L2tx-conv} and \eqref{ine-uuqh1}
we infer that for any $\beta'\in (0,\frac{\beta}{8+\beta})$,
\begin{align}
\sum_{q \geq 0} \norm{ u_{q+1} - u_q }_{H^{\beta'}_{t,x}}
	\leq  & \, \sum_{q \geq 0} \norm{ u_{q+1} - u_q }_{L^2_{t,x}}^{1- \beta'}\norm{ u_{q+1} - u_q }_{H^1_{t,x}}^{\beta'}\notag\\
	\lesssim  &\,  \sum_{q \geq 0} (M^*)^{1-\beta'} \delta_{q+1}^{\frac{1-\beta'}{2}}\lambda_{q+1}^{8 \beta' } \notag\\
	\lesssim &\, (M^*)^{1-\beta'} \delta_{1}^{\frac{1-\beta'}{2}}\lambda_{1}^{8 \beta' } +
	\sum_{ q \geq 1} (M^*)^{1-\beta'} \lambda_{q+1}^{-\beta(1 - \beta')  + 8\beta'  } <\9, \label{interpo}
\end{align}
where the last step is due to the inequality $-\beta(1 - \beta')  + 8\beta' <0$.

Therefore, $\{u_q\}_{q\geq 0}$ is a Cauchy sequence in $H^{\beta'}_{t,x}$,
and thus there exists $u\in H^{\beta'}_{t,x}$ such that $\lim_{q\rightarrow\infty}(u_q)=u$ in $H^{\beta'}_{t,x}$.
Taking into account the fact that
$\lim_{q \to \infty} \mathring{R}_{q} = 0 $ in $L^1_{t,x}$
and $u_q(0)=\wt u(0)$ for all $q\geq 0$,
we conclude that $u$ is a weak solution to \eqref{equa-NS}
with the initial datum $\wt u(0)$.

$(ii)$. Regarding the regularity of the weak solution $u$,
by virtue of \eqref{ne6.6} and \eqref{nne6.6}, we have
\begin{align}
	\sum_{q \geq 0}\norm{ u_{q+1} - u_q }_{L^\gamma_tW^{s,p}_x} < \9,   \label{result-lw}
\end{align}
which yields that $\{u_q\}_{q\geq 0}$
is also a Cauchy sequence in $L^\gamma_tW^{s,p}_x$.
Thus, using the uniqueness of weak limits
we obtain
$$u \in H^{\beta^\prime}_{t,x}  \cap  L^\gamma_tW^{s,p}_x,$$
thereby proving the regularity statement $(ii)$.

$(iii).$ Regarding the Hausdorff measure of the singular set,
we set
\[
\mathcal{G} =   \bigcup_{q \geq 0} I_q^c \setminus \{0,T\}, \ \
\mathcal{B}:=[0,T] \setminus \mathcal{G}.
\]
By construction, $u_q$ is a smooth solution to the hyperdissipative Navier-Stokes equation \eqref{equa-NS}
on $\mathcal{G}$,
and  $u \equiv  u_q $ on $I_{q}^c$ for each $q$.
Thus, $\mathcal{B}$ contains the singular set of time.
Since by \eqref{Iq1-C-def},
each $I_q$ is covered by at most $m_{q}=\theta_q^{-{\eta}}$ many balls of radius $5\theta_q$,
the Hausdorff dimension of the potential singular set $\mathcal{B}$ can be estimated by
\begin{align*}
d_{\mathcal{H}} ( \mathcal{B}) = d_{\mathcal{H}} (\bigcap_{q\geq 0}I_q )=d_{\mathcal{H}} (\limsup_q I_q )\leq \eta<\eta_* .
\end{align*}

$(iv).$ Concerning the small deviations of temporal supports,
we note that
\begin{align}
	& \supp_t \mathring{R}_0
	\subseteq K_0 := \supp_t  u_0
	= \supp_t \wt u.
\end{align}
Then, set
\begin{align}
   K_q:= \supp_t u_q\cup \supp_t \mathring{R}_q, \ \ q\geq 1.
\end{align}
Using \eqref{suppru} we have
\begin{align}
	 K_{q+1} \subseteq N_{\delta_{q+2}^\frac 12} K_{q}
      \subseteq \cdots
      \subseteq N_{\sum\limits_{j=2}^{q+2}\delta_{j}^\frac 12} K_{0}.
\end{align}
Thus, taking into account
$\sum_{q\geq 0}\delta_{q+2}^{1/2}\leq \ve_*$
for $a$ large enough
we get
\begin{align}
	&\supp_t u	\subseteq \bigcup_{q\geq 0} K_q
	\subseteq N_{\ve_*} ( \supp_t \tilde{u}).
\end{align}
This verifies the temporal support statement $(iii)$.

$(v).$ Finally, for the small deviations on average,
we infer from \eqref{u-B-Lw-conv} that
\begin{align}
	\norm{ u - \tilde{u} }_{L^1_tL^2_x}+\norm{ u - \tilde{u} }_{L^\gamma_tW^{s,p}_x}
	\leq &\,\sum_{q \geq 0}(\norm{  u_{q+1} - u_q }_{L^1_tL^2_x}+ \norm{ u_{q+1} - u_q }_{L^\gamma_tW^{s,p}_x})\notag \\
	\leq &\,2\sum_{q\geq 0} \delta_{q+2} ^{\frac12} \leq 2\sum_{q\geq 2} a^{-\beta b^q} \leq 2\sum_{q\geq 2} a^{-\beta bq}
	=\frac{2a^{-2\beta b}}{1-a^{-\beta b}}\leq \ve_*, \label{e6.5}
\end{align}
where the last inequality holds
for $a$ large enough (depending on $\ve_*$).

Therefore, the proof of Theorem~\ref{Thm-Non-hyper-NSE} is complete.
\hfill $\square$  \\

\paragraph{\bf Proof of Corollary \ref{Cor-Strong-Nonuniq}.}
Let $\wt u$ be a weak solution to \eqref{equa-NS} with
divergence-free $\wt u(0)=\wt u_0\in L^2$.

If $\wt u$ is not a Leray-Hopf solution,
then due to Lions \cite{lions69},
there exists a unique smooth Leray-Hopf solution $u$ to \eqref{equa-NS} on $[0,T]$.
Hence, $u$ is different from $\wt u$.

If $\wt u$ is a Leray-Hopf solution to \eqref{equa-NS},
we may choose any smooth, divergence-free and mean-free vector field $v$ on $[0,T]$
and set
\begin{align*}
  u_m^*:=\wt u+\frac{m}{c_0}v,
\end{align*}
where $m \in \mathbb{N}_+$ and $c_0:= \|v\|_{L^1([T/2,T];L^2_x)} (>0)$.
Then, we glue $\wt u$ and $u^*_m$ together by
\begin{align}\label{def-wtu1}
\wt u_m:= \chi \wt u+(1-\chi)u^*_m,
\end{align}
where $\chi\in C^\9([0,T])$ satisfies
\begin{align}\label{def-chi2}
\chi_{i}= \begin{cases}1 & \text { if } 0 \leq t \leq \frac{T}{4},
\\ 0 & \text { if }  \frac{T}{2} \leq t\leq T.\end{cases}
\end{align}
Note that,
$\wt u_m$ is a smooth, divergence-free and mean-free vector field on $[0,T]$,
such that
\begin{align*}
  \wt u_m(0)=\wt u_0, \ \
  \wt u_m|_{[\frac T 2, T]} = u_m^*|_{[\frac T 2, T]}.
\end{align*}

Then, for any $\ve_* \in (0,1/4)$,
Theorem~\ref{Thm-Non-hyper-NSE}
gives weak solutions $u_m\in H^{\beta'}_{t,x} \cap L^\gamma_tW^{s,p}_x$
to \eqref{equa-NS} on $[0,T]$,
$m\geq 1$,
which are smooth outside a null set in time and
satisfy
\begin{align}\label{condition-u1}
u_m(0)=\wt u_m(0)=\wt u_0 \quad \text{and}\quad \|u_m-\wt u_m\|_{L^1([0,T];L^2_x)}\leq \va_*.
\end{align}

Hence, taking into account
\begin{align*}
  \|u_m-\wt u\|_{L^1(T/2,T;L^2_x)}&\geq\|\wt u_m - \wt u\|_{L^1(T/2,T;L^2_x)}- \|\wt u_m -u_m\|_{L^1(T/2,T;L^2_x)}\\
  &= \| u^*_m - \wt u\|_{L^1(T/2,T;L^2_x)}-\|\wt u_m -u_m\|_{L^1(T/2,T;L^2_x)}\\
  &\geq m-\ve_*>\frac12,
\end{align*}
which yields that $u_m\not = \wt u$ on $[0,T]$ for every $m\geq 1$.

Thus, we conlcude that for any weak solution $\wt u$ to \eqref{equa-NS},
there exists another different weak solution to \eqref{equa-NS}
in the space $L^\gamma_tW^{s,p}_x$
with the same initial datum $\wt u(0)$,
where $(s,\gamma, p) \in \mathcal{A}_1 \cup \mathcal{A}_2$.

Moreover, for any $m,m'\geq 1$, $m\neq m'$,
we have
\begin{align*}
  \|u_m-u_{m'}\|_{L^1(T/2,T;L^2_x)}&\geq\|\wt u_m - \wt u_{m'}\|_{L^1(T/2,T;L^2_x)}- \|\wt u_m -u_m\|_{L^1(T/2,T;L^2_x)}
  -\|\wt u_{m'} -u_{m'}\|_{L^1(T/2,T;L^2_x)}\\
  &= \| u^*_m - u^*_{m'}\|_{L^1(T/2,T;L^2_x)}- \|\wt u_m -u_m\|_{L^1(T/2,T;L^2_x)}
  -\|\wt u_{m'} -u_{m'}\|_{L^1(T/2,T;L^2_x)}\\
  &\geq |m-m'|-2\ve_*> \frac12,
\end{align*}
which yields that $u_m\not = u_{m'}$ on $[0,T]$.
Therefore, there exist infinitely many different weak solutions to \eqref{equa-NS}
with the same initial datum $\wt u(0)$.
We finish the proof of Corollary~\ref{Cor-Strong-Nonuniq}.
\hfill $\square$ \\

\paragraph{\bf Proof of Corollary \ref{Cor-Nonuniq-Supercri}.}
Taking $s=0$ and $\gamma =\infty$ in the supercritical regime $\mathcal{A}_1$
and then applying Theorem \ref{Thm-Non-hyper-NSE} we obtain the non-uniqueness
in the supercritical spaces $L^p_x$
for any $1\leq p< 3/(2\alpha-1)$.
Note that,
the initial data of non-unique solutions in Theorem \ref{Thm-Non-hyper-NSE}
are in $L^2_x$, and hence also in $L^p_x$,
due to the embeddings on torus:
$L^2_x\hookrightarrow L^{3/(2\alpha-1)}_x \hookrightarrow L^p_x$
as $\alpha \in [5/4,2)$.

Regarding $(ii)$, for any $s<s_p:=3/p+1-2\alpha$,
we may find a small constant $\eta>0$ such that
$s<s_p-\eta$.
Then, using the embedding theorems (cf.\cite[p.164]{ST87}) we infer
\begin{align*}
	F^{s_p-\eta}_{p,q}
	\hookrightarrow B^{s_p-\eta}_{p,p\vee q}
	\hookrightarrow B^{s}_{p,q}.
\end{align*}
Then, letting $\wt p := \frac{3}{2\alpha-1+\eta}< \frac{3}{2\alpha-1}$
(we may take $\eta$ even smaller such that $\wt p>1$
and using the embedding of Triebel-Lizorkin spaces
(cf. \cite[p.170]{ST87}) we get
\begin{align*}
	L^{\wt p} = F^{0}_{\wt p,2}
	\hookrightarrow F^{s_p-\eta}_{p,q}.
\end{align*}
Thus, we obtain
\begin{align*}
	L^{\wt p} \hookrightarrow B^{s}_{p,q},
\end{align*}
which along with the statement $(i)$ yields
the existence of non-unique weak solutions in ${B}^{s}_{p,q}$.

Similarly, the last statement $(iii)$ follows from $(i)$
and the following embedding (cf. \cite[p.165]{ST87})
\begin{align*}
	L^{\wt p}
	\hookrightarrow F^{s_p-\eta}_{p,q}
	\hookrightarrow F^{s}_{p,q}.
\end{align*}
The proof is therefore complete.
\hfill $\square$ \\

\paragraph{\bf Proof of Theorem~\ref{Thm-hyperNSE-Euler-limit}}
We choose two families of standard compactly support Friedrichs mollifiers $\left\{\phi_{\varepsilon}\right\}_{\varepsilon>0}$
and $\left\{\varphi_{\varepsilon}\right\}_{\varepsilon>0}$ on $\T^{3}$ and $\R$, respectively.
Set
\begin{align} \label{un-u-Bn-B}
	u_{n} :=\left(u *_{x} \phi_{\lambda_{n}^{-1}}\right) *_{t} \varphi_{\lambda_{n}^{-1}},
\end{align}
for some $n>0$, restricted to $[0,T]$.

Since $u$ is a weak solution to the Euler equation \eqref{equa-Euler},
we infer that $u_n$ satisfies
\begin{equation}\label{mhd2}
	\left\{\aligned
	&\p_t u_n+\lambda_{n}^{-2\alpha}(-\Delta)^{\alpha} u_n+ \div(u_n\otimes u_n)+\nabla P_n=\div  \mathring{R}_n ,  \\
	&\div u_n=0  , \\
	\endaligned
	\right.
\end{equation}
where the Reynolds stress
\begin{align}
	\mathring{R}_n
	:=& u_{n} \mathring\otimes u_{n}
	-((u \mathring\otimes u) *_{x} \phi_{\lambda_{n}^{-1}}) *_{t} \varphi_{\lambda_{n}^{-1}} +\lambda_{n}^{-2\alpha}\mathcal{R} (-\Delta)^{\alpha} u_{n}, \label{rnu}
\end{align}
and the pressure
\begin{align*}
	P_n:= P*_x \phi_{\lbb_n^{-1}} *_t \vf_{\lbb_n^{-1}}
	-|u_n|^2
	+ |u|^2 *_x \phi_{\lbb_n^{-1}} *_t \vf_{\lbb_n^{-1}}.
\end{align*}

Let $ \nu:=\nu_{n}:=\lambda_n^{-2\alpha}$
and $\widetilde{M}:= \|u \|_{H^{\widetilde{\beta}}_{t,x}}$.
We claim that for $a$ sufficiently large,
$(u_n,\mathring{R}_n)$ satisfy the iterative estimates \eqref{uh3}-\eqref{rl1} at level $q=n(\geq 1)$.

To this end, let us first consider the most delicate estimate \eqref{rl1}.
Using the Minkowski inequality and the Slobodetskii-type norm of Sobolev spaces
we have
(see, e.g., \cite[(6.35)]{lzz21})
\begin{align}  \label{u-un-lbbn}
	\|u-u_{n}\|_{L^2_{t,x}}
	\lesssim&\, \lambda_{n}^{- \widetilde{\beta}} \|u\|_{H^{\widetilde{\beta}}_{t,x}}\lesssim \lambda_{n}^{- \widetilde{\beta}}\widetilde{M}.
\end{align}

Moreover, we note that
\begin{align} \label{uu-uun-wtM}
	& \|u_{n} \otimes u_{n}-((u \otimes u) *_{x} \phi_{\lambda_{n}^{-1}}) *_{t} \varphi_{\lambda_{n}^{-1}}  \|_{L^{1}_{t,x}}  \notag \\
	\lesssim& \|u-u_n\|_{L^2_{t,x}}^2
	+ \|(|s|+|y|)^{4+2\wt \beta} \phi_{\lambda_{n}^{-1}} \varphi_{\lambda_{n}^{-1}} \|_{L^\9_{s,y}}
	\left\|\frac{u(t,x)-u(t-s,x-y)}{(|s|+|y|)^{2+\wt \beta}} \right\|_{L^2_{t,x}L^2_{s,y}}^2 \notag \\
	\lesssim&   \lbb_n^{-2\wt \beta} \|u\|_{H^{\wt \beta}_{t,x}}^2
	\lesssim    \lbb_n^{-2\wt \beta}  \wt M^2,
\end{align}
where the last step is due to \eqref{u-un-lbbn}.

Estimating as in \eqref{e5.18} we also get
\begin{align} \label{DeltaRu-L1-wtM}
	\|\lbb_n^{-2\alpha}\mathcal{R} (-\Delta)^{\alpha} u_n\|_{L^1_{t,x}}
	\lesssim \lbb_n^{-2\alpha} \( \|u_n\|_{L^1_{t}L^2_x}^{\frac{4-2\alpha}{3}} \|u_n\|_{L^1_{t}H^3_x}^{\frac{2\alpha-1}{3}} \)
	\lesssim \lbb_n^{-1} \|u\|_{L^2_{t,x}} 	
	\lesssim  \lbb_n^{-1}\wt M.
\end{align}

Thus, combing \eqref{rnu}, \eqref{uu-uun-wtM} and \eqref{DeltaRu-L1-wtM} altogether
we conclude that
\begin{align} \label{Ru-wtM-L1}
	\|\mathring{R}_n\|_{L^{1}_{t,x}}
	& \lesssim \|u_{n} \otimes u_{n}- ((u \otimes u) *_{x} \phi_{\lambda_{n}^{-1}}) *_{t} \varphi_{\lambda_{n}^{-1}}  \|_{L^{1}_{t,x}}
	+ \|\lambda_{n}^{-2\alpha}\mathcal{R} (-\Delta)^{\alpha} u_{n}\|_{L^1_{t,x}}  \notag  \\
	&  \lesssim \lambda_{n}^{-1}\widetilde{M}
	+ \lambda_{n}^{-2\widetilde{\beta}} \widetilde{M}^{2},
\end{align}
which verifies \eqref{rl1} at level $n$ by choosing $\beta$ and $\ve_R$ sufficiently small,
such that $\wt \beta>\ve_R/2 + \beta b$,
where $\ve_R$ and $\beta$ are as in the proof of Theorem~\ref{Prop-Iterat}.

Regarding the inductive estimate \eqref{uh3},
by Sobolev's embedding $H^1_{t} \hookrightarrow L^\9_{t}$
and Young's inequality,
\begin{align} \label{un-Bn-C1}
	\left\|u_{n}\right\|_{L^\9_tH^3_x} &\leq \sum_{0\leq |N|\leq 3}\|\nabla^{N} u_n\|_{L^\9_{t}L^2_x}\notag\\
	&\lesssim \sum_{0\leq |N|\leq 3}\|\|u*_t\varphi_{\lambda_n^{-1}}\|_{L^\9_t}\|_{L^2_x}\|\nabla^{N} \phi_{\lambda_n^{-1}}\|_{L^1_x}  \notag\\
	&\lesssim  \sum_{0\leq |N|\leq 3}\sum_{0\leq M\leq 1}\|u\|_{L^2_{t,x}}\|\p_t^{M}\varphi_{\lambda_n^{-1}}\|_{L^1_t}
	\|\nabla^{N} \phi_{\lambda_n^{-1}}\|_{L^1_x}  \notag \\
	&\lesssim \lbb_n^4 \wt M,
\end{align}
which verifies \eqref{uh3} at level $n$.

Moreover, by the Sobolev embedding $H^3_{t,x} \hookrightarrow L^\9_{t,x}$,
\begin{align} \label{un-pth3}
	\left\|\p_t u_{n}\right\|_{L^\9_tH^2_x}
	\lesssim \sum_{0\leq |N|\leq 2}\|\p_t \nabla^{N}u_n\|_{L^\9_{t,x}}
	\lesssim  \|u_n\|_{H^6_{t,x}}
	\lesssim \lbb_n^6 \wt M,
\end{align}
which verifies \eqref{upth2} at level $n$.

Finally, for the estimate \eqref{rh3},
by the Sobolev embedding $W^{4,1}_{t,x}\hookrightarrow L^\9_{t,x}$, we obtain
\begin{align}
	\|\mathring{R}_n\|_{L^\9_tH^3_x}\leq &  \sum_{0\leq|N|\leq 3}\|\nabla^{N}\mathring{R}_n\|_{L^\9_{t,x}} \lesssim \|\mathring{R}_n\|_{W^{7,1}_{t,x}} \notag\\
	\leq&  \|u_{n} \otimes u_{n}   - (u \otimes u) *_{x} \phi_{\lambda_{n}^{-1}} *_{t} \varphi_{\lambda_{n}^{-1}} \|_{W^{7,1}_{t,x}}
	+\lambda_{n}^{-2\alpha} \|\mathcal{R} (-\Delta)^{\alpha} u_{n}\|_{W^{7,1}_{t,x}} \notag \\
	\lesssim&\sum\limits_{0\leq M_{1}+M_{2}+N_1+N_2\leq 7}
	\|\partial_t^{M_{1}}\na^{N_1}u_n\|_{L^2_{t,x}}  \|\partial_t^{M_{2}}\na^{N_2}u_n\|_{L^2_{t,x}} \notag \\
	& \quad  + \sum\limits_{0\leq M+ N\leq 7}   \|u\|_{L^2_{t,x}}^2
	\|\na^{N} \phi_{\lambda_{n}^{-1}} \|_{L^1_x}   \|\partial_t^{M} \varphi_{\lambda_{n}^{-1}} \|_{L^1_t} + \lbb_n^{-2\alpha} \||\na|^{2\alpha-1} u_n\|_{H^7_{t,x}} \notag \\
	\lesssim& \lbb_n^7 \|u\|_{L^2_{t,x}}^2+\lbb_n^{-2\alpha} \|u_n\|_{L^2_{t,x}}^{\frac{2-\alpha}{5}}\|u_n\|_{H^{10}_{t,x}}^{\frac{3+\alpha}{5}}  \notag \\
	\lesssim&  \lbb_n^7 (\wt M+ \wt M^2 ).
\end{align}
Therefore, taking $a$ sufficiently large, we verify the inductive estimate \eqref{rh3}.

Thus, we can apply Theorem~\ref{Prop-Iterat} to
the approximate equation \eqref{mhd2} and
then let $q\rightarrow\infty$ to obtain a weak solution $u^{(\nu_{n})} \in H^{\beta'}_{t,x}$ to \eqref{equa-NS}
for some $\beta'\in (0,\beta/(8+\beta))$.

Furthermore,
estimating as in \eqref{interpo},
using \eqref{u-un-lbbn} and taking $\beta'$ sufficiently small such that
$0<\beta'<\min\{\wt \beta, \beta/(8+\beta)\}$
we deduce that for any $n\geq 1$,
\begin{align*}
   \|u^{(\nu_{n})}-u\|_{H^{\beta^{\prime}}_{t,x}}
		&\leq\|u^{(\nu_{n})}-u_{n}\|_{H^{\beta^{\prime}}_{t,x}}
		+\|u-u_{n}\|_{H^{\beta^{\prime}}_{t,x}} \notag\\
	&\leq C \(\sum_{q=n}^{\infty}  \lambda_{q+1}^{-\beta(1-\beta^{\prime})} \lambda_{q+1}^{8 \beta^{\prime}}+ \|u-u_n\|_{L^2_{t,x}}^{1-\frac{\beta'}{\wt \beta}}  \|u-u_n\|_{H^{\wt \beta}_{t,x}}^{\frac{\beta'}{\wt \beta}}\)\notag\\
		&\leq C \(\sum_{q=n}^{\infty}  \lambda_{q+1}^{-\beta(1-\beta^{\prime})} \lambda_{q+1}^{8 \beta^{\prime}}+  \lbb_n^{-(\wt \beta- \beta')} \wt M\)
	\leq \frac{1}{n},
\end{align*}
where the last step is valid for $a$ sufficiently large.
This verifies the strong convergence \eqref{convergence} in $H^{\beta^{\prime}}_{t,x}$.

Therefore, the proof of Theorem \ref{Thm-hyperNSE-Euler-limit} is complete.
\hfill $\square$

\section{Appendix} \label{Set-App}

In this section,
we collect some preliminary results used in the previous sections
and the well-posedness result in the critical space $L^{3/(2\alpha-1)}_x$.

\begin{lemma} ({\bf Geometric Lemma}, \cite[Lemma 4.1]{bcv21})
	\label{geometric lem 2}
	There exists a set $\Lambda \subset \mathbb{S}^2 \cap \mathbb{Q}^3$ that consists of vectors $k$
	with associated orthonormal bases $(k, k_1, k_2)$,  $\varepsilon_u> 0$,
	and smooth positive functions $\gamma_{(k)}: B_{\varepsilon_u}(\Id) \to \mathbb{R}$,
	where $B_{\varepsilon_u}(\Id)$ is the ball of radius $\varepsilon_u$ centered at the identity
	in the space of $3 \times 3$ symmetric matrices,
	such that for  $S \in B_{\varepsilon_u}(\Id)$ we have the following identity:
	\begin{equation}
		\label{sym}
		S = \sum_{k \in \Lambda} \gamma_{(k)}^2(S) k_1 \otimes k_1.
	\end{equation}
\end{lemma}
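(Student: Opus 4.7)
The plan is to build the wavevector set $\Lambda$ and the representation \eqref{sym} by combining elementary linear algebra on $\mathrm{Sym}_3$ with a perturbation argument, then adapting to guarantee rationality.

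First, I would note that the space $\mathrm{Sym}_3$ of symmetric $3\times 3$ real matrices is $6$-dimensional, and is linearly spanned by the rank-one elements $\{v\otimes v : v\in\mathbb{S}^2\}$. I would select a finite collection $\{v_j\}_{j=1}^{N}$ (with $N\geq 6$) of unit vectors such that $\{v_j\otimes v_j\}$ spans $\mathrm{Sym}_3$ and such that the identity lies in the interior of the positive convex cone they generate, i.e.\ $\mathrm{Id}=\sum_j c_j\, v_j\otimes v_j$ with all $c_j>0$. A clean way is to use the six vectors $e_i$ and $(e_i\pm e_j)/\sqrt{2}$, or any symmetric configuration taken from a regular polytope, for which the representation of $\mathrm{Id}$ with strictly positive weights is explicit (each such $v_j\otimes v_j$ has trace $1$, forcing $\sum c_j=3$).

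Second, since the family $\{v_j\otimes v_j\}$ spans $\mathrm{Sym}_3$, the affine map $(c_j)\mapsto \sum_j c_j\, v_j\otimes v_j$ admits smooth (in fact affine-linear) right inverses $S\mapsto (c_j(S))$ on a neighborhood of $\mathrm{Id}$; one may choose the linear inverse picking any $6$-element sub-basis, and extend the remaining coefficients to be constant. Because $c_j(\mathrm{Id})>0$ for each $j$ by construction, continuity of $c_j(\cdot)$ yields an $\varepsilon_u>0$ with $c_j(S)>0$ for all $S\in B_{\varepsilon_u}(\mathrm{Id})$. One then sets $\gamma_{(k)}(S):=\sqrt{c_j(S)}$ where we relabel $v_j=:k_1^{(j)}$; this is smooth on $B_{\varepsilon_u}(\mathrm{Id})$ because $c_j$ stays bounded away from zero, and it satisfies \eqref{sym}.

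Third, to secure the arithmetic condition $\Lambda\subset\mathbb{S}^2\cap\mathbb{Q}^3$, I would exploit the density of $\mathbb{S}^2\cap\mathbb{Q}^3$ in $\mathbb{S}^2$: after an arbitrarily small perturbation each $v_j$ can be replaced by a rational unit vector (rationally parametrized spheres via stereographic projection), while preserving both the spanning property and the strict positivity at $\mathrm{Id}$ (both are open conditions). For each such rational $k_1^{(j)}$, pick a rational unit vector $k^{(j)}\in\mathbb{S}^2\cap\mathbb{Q}^3$ orthogonal to it (again by density in the orthogonal complement), and set $k_2^{(j)}:=k^{(j)}\times k_1^{(j)}$, which is automatically rational. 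Defining $\Lambda:=\{k^{(j)}\}$ with associated frames $(k^{(j)},k_1^{(j)},k_2^{(j)})$ concludes the construction.

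The main delicate point is the simultaneous compatibility of three constraints: rationality of $k,k_1,k_2$, the spanning property of $\{k_1\otimes k_1\}$, and strict positivity of all coefficients at $\mathrm{Id}$. I would handle this by first producing the (possibly irrational) spanning set with positive coefficients, which is a genuinely open condition in $(\mathbb{S}^2)^N$, and only then using density of rational points on the sphere to perturb into $\mathbb{Q}^3$ without breaking openness. Everything else (smoothness, shrinking $\varepsilon_u$ to guarantee positivity, constructing the companion vectors $k_2^{(j)}$) is routine. This is precisely the argument carried out in \cite[Lemma 4.1]{bcv21}, and earlier in \cite{dls13,bdis15}, and I would follow it verbatim once the admissible configuration is in place.
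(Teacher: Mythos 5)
Your approach is essentially the standard proof of this geometric lemma, going back to Nash and De Lellis--Sz\'ekelyhidi, and it reconstructs faithfully what is done in the cited reference \cite[Lemma 4.1]{bcv21}; the present paper does not reprove the statement but simply cites it. So the plan — span $\mathrm{Sym}_3$ by rank-one tensors $k_1\otimes k_1$, arrange $\mathrm{Id}$ in the interior of the positive cone to obtain smooth positive coefficient functions on a neighborhood of $\mathrm{Id}$ by an affine right inverse, then perturb into rational data using openness of the spanning and positivity conditions — is the correct route.

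Two small points need repair. First, the specific configuration you call ``a clean way'' cannot work if it has only six vectors: with exactly six rank-one matrices forming a basis of $\mathrm{Sym}_3$, the coefficients of any decomposition of $\mathrm{Id}$ are uniquely determined, and any nonzero coefficient on $(e_i+e_j)\otimes(e_i+e_j)$ introduces an off-diagonal entry that the diagonal $e_i\otimes e_i$ cannot cancel, forcing those coefficients to vanish. One genuinely needs more than six directions; taking all nine vectors $\{e_i\}\cup\{(e_i\pm e_j)/\sqrt{2}\}_{i<j}$ does work, since $(e_i+e_j)\otimes(e_i+e_j)/2+(e_i-e_j)\otimes(e_i-e_j)/2=e_i\otimes e_i+e_j\otimes e_j$ lets you write $\mathrm{Id}$ with all coefficients strictly positive while retaining the spanning property. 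Second, the step ``pick a rational unit vector $k^{(j)}\in\mathbb{S}^2\cap\mathbb{Q}^3$ orthogonal to $k_1^{(j)}$'' is not a consequence of density of rational points on $\mathbb{S}^2$; what you need is a rational point on the rational great circle $k_1^\perp\cap\mathbb{S}^2$, which is a separate (though true) fact. The cleanest fix: for $k_1\in\mathbb{S}^2\cap\mathbb{Q}^3$ with $k_1\neq e_1$, the Householder reflection $R=\mathrm{Id}-2vv^{T}$ with $v=(k_1-e_1)/|k_1-e_1|$ has rational entries, because $vv^{T}=(k_1-e_1)(k_1-e_1)^{T}/|k_1-e_1|^{2}$ and $|k_1-e_1|^{2}=2-2(k_1\cdot e_1)\in\mathbb{Q}$; since $Re_1=k_1$, the columns $Re_2,Re_3$ provide a rational orthonormal completion of $k_1$, and then $k_2:=k\times k_1$ is automatic. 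Alternatively, perturb the whole orthonormal frame at once via density of $SO(3,\mathbb{Q})$ in $SO(3)$. With these two adjustments the argument is complete.
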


As pointed out in \cite{bcv21},
there exists $N_{\Lambda} \in \mathbb{N}$ such that
\begin{equation} \label{NLambda}
	\{ N_{\Lambda} k,N_{\Lambda}k_1 , N_{\Lambda}k_2 \} \subseteq N_{\Lambda} \mathbb{S}^2 \cap \mathbb{Z}^3.
\end{equation}
We denote by $M_*$ the geometric constant such that
\begin{align}
	\sum_{k \in \Lambda} \norm{\gamma_{(k)}}_{C^4(B_{\varepsilon_u}(\Id))} \leq M_*.
	\label{M bound}
\end{align}
This parameter  is universal and will be used later in the estimates of the size of perturbations.

\medskip
Then, we recall the $L^p$ decorrelation lemma introduced by \cite[Lemma 2.4]{cl21} (see also \cite[Lemma 3.7]{bv19b}), which is the key lemma to obtain the $L_{t,x}^2$ estimates of the perturbations.
\begin{lemma}[\cite{cl21}, Lemma 2.4]   \label{Decorrelation1}
	Let $\sigma\in \mathbb{N}$ and $f,g:\mathbb{T}^d\rightarrow \R$ be smooth functions. Then for every $p\in[1,\infty]$,
	\begin{equation}\label{lpdecor}
		\big|\|fg(\sigma\cdot)\|_{L^p(\T^d)}-\|f\|_{L^p(\T^d)}\|g\|_{L^p(\T^d)} \big|\lesssim \sigma^{-\frac{1}{p}}\|f\|_{C^1(\T^d)}\|g\|_{L^p(\T^d)}.
	\end{equation}
\end{lemma}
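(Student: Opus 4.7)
The plan is to partition the torus into $\sigma^d$ small cubes on which $f$ is nearly constant, and then exploit the exact periodicity of $g(\sigma \cdot)$ to reduce the question to a Riemann-sum approximation for $\|f\|_{L^p}^p$. First I would divide $\T^d$ into the $\sigma^d$ cubes $Q_i$ of side length $\sigma^{-1}$ (an exact partition, since $\sigma\in \mathbb{N}$), fix a center $x_i\in Q_i$, and introduce the piecewise-constant approximation $f_{\#}(x):=\sum_i f(x_i)\mathbf{1}_{Q_i}(x)$. The mean value theorem gives $\|f-f_{\#}\|_{L^\infty(\T^d)}\lesssim \sigma^{-1}\|f\|_{C^1}$, while the change of variables $y=\sigma x$ together with the $1$-periodicity of $g$ yields the two identities $\|g(\sigma\cdot)\|_{L^p(\T^d)}=\|g\|_{L^p(\T^d)}$ and, more locally, $\|g(\sigma\cdot)\|_{L^p(Q_i)}^p=\sigma^{-d}\|g\|_{L^p(\T^d)}^p$.

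The triangle inequality in $L^p$ then gives the first error
\begin{equation*}
\bigl|\|fg(\sigma\cdot)\|_{L^p}-\|f_{\#}g(\sigma\cdot)\|_{L^p}\bigr|
\leq \|(f-f_{\#})g(\sigma\cdot)\|_{L^p}
\lesssim \sigma^{-1}\|f\|_{C^1}\|g\|_{L^p}.
\end{equation*}
For the main term, the disjoint supports of the $\mathbf{1}_{Q_i}$ give the exact identity
\begin{equation*}
\|f_{\#}g(\sigma\cdot)\|_{L^p}^p
=\sum_i |f(x_i)|^p\,\|g(\sigma\cdot)\|_{L^p(Q_i)}^p
=\|g\|_{L^p}^p\cdot \sigma^{-d}\sum_i|f(x_i)|^p.
\end{equation*}
A standard Riemann-sum estimate based on the pointwise inequality $\bigl||a|^p-|b|^p\bigr|\leq p\,|a-b|\max(|a|,|b|)^{p-1}$ then yields
\begin{equation*}
\Bigl|\sigma^{-d}\sum_i|f(x_i)|^p-\|f\|_{L^p}^p\Bigr|
\lesssim \sigma^{-1}\|f\|_{C^1}\|f\|_{L^\infty}^{p-1}.
\end{equation*}

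Finally, the elementary concavity inequality $|A^{1/p}-B^{1/p}|\leq |A-B|^{1/p}$ for $A,B\geq 0$ and $p\geq 1$ converts the bound on $p$-th powers into the corresponding bound on $L^p$-norms, and combined with $\|f\|_{L^\infty}\leq \|f\|_{C^1}$ gives
\begin{equation*}
\bigl|\|f_{\#}g(\sigma\cdot)\|_{L^p}-\|f\|_{L^p}\|g\|_{L^p}\bigr|
\lesssim \sigma^{-1/p}\|f\|_{C^1}\|g\|_{L^p}.
\end{equation*}
Since $\sigma^{-1}\leq \sigma^{-1/p}$ for every $p\in[1,\infty]$, adding this to the first triangle-inequality step closes the proof. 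The case $p=\infty$ is immediate, as the claimed inequality reduces to the trivial bound $\|fg(\sigma\cdot)\|_{L^\infty}\leq \|f\|_{L^\infty}\|g\|_{L^\infty}+\|f\|_{C^1}\|g\|_{L^\infty}$. The only technical subtlety is the passage from an $O(\sigma^{-1})$ error on the $p$-th powers to the sharper $O(\sigma^{-1/p})$ error on the norms, which is precisely what the concavity inequality $|A^{1/p}-B^{1/p}|\leq |A-B|^{1/p}$ delivers; this is what produces the rate $\sigma^{-1/p}$ stated in the lemma.
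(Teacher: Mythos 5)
Your proof is correct, and it follows the same cube-decomposition strategy as the cited reference \cite[Lemma 2.4]{cl21} (which in turn refines \cite[Lemma 3.7]{bv19b}): tile $\T^d$ by $\sigma^d$ cubes of side $\sim\sigma^{-1}$, use the exact periodicity $\|g(\sigma\cdot)\|_{L^p(Q_i)}^p=\sigma^{-d}\|g\|_{L^p(\T^d)}^p$, approximate $f$ by a constant on each cube, and pay an $O(\sigma^{-1})$ Riemann-sum error on the $p$-th power, which the subadditivity $|A^{1/p}-B^{1/p}|\le|A-B|^{1/p}$ converts to the stated rate $\sigma^{-1/p}$. The only cosmetic difference from the standard write-up is that you first swap $f$ for the piecewise-constant $f_{\#}$ by a triangle inequality at the $L^p$-norm level (costing $O(\sigma^{-1})$, which is even better than needed) and then compute $\|f_{\#}g(\sigma\cdot)\|_{L^p}^p$ exactly, whereas the usual presentation performs both comparisons directly at the level of $p$-th powers; the content and the resulting constants are the same.
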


The following stationary phase lemma is a main tool to handle the errors of Reynolds stress.
\begin{lemma}[\cite{lt20}, Lemma 6; see also \cite{bv19b}, Lemma B.1] \label{commutator estimate1}
	Let $a \in C^{2}\left(\mathbb{T}^{3}\right)$. For all $1<p<\infty$ we have
	$$
	\left\||\nabla|^{-1} \P_{\neq 0}\left(a \P_{\geq k} f\right)\right\|_{L^{p}\left(\mathbb{T}^{3}\right)}
	\lesssim k^{-1}\left\|\nabla^{2} a\right\|_{L^{\infty}\left(\mathbb{T}^{3}\right)}\|f\|_{L^{p}\left(\mathbb{T}^{3}\right)}
	$$
 for any smooth function $f \in L^{p}\left(\mathbb{T}^{3}\right)$.
\end{lemma}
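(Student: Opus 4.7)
The plan is to establish this stationary-phase bound through a Littlewood-Paley decomposition of $a$, exploiting the frequency gap created by $\P_{\geq k}$ acting on $f$ to extract the $k^{-1}$ gain from $|\nabla|^{-1}$. The key mechanism is that either the frequencies of $a$ are much smaller than $k$, in which case the product $a\cdot \P_{\geq k}f$ inherits the high-frequency support and $|\nabla|^{-1}$ pays off a factor of $k^{-1}$ directly; or the frequencies of $a$ exceed $k/2$, in which case Bernstein's inequality converts $\|\nabla^{2}a\|_{L^\infty}$ into strong decay in $\|a\|_{L^\infty}$.

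Concretely, I would split $a = a_L + a_H$ with $a_L := \P_{\leq k/2}a$ and $a_H := \P_{> k/2}a$. For the low-frequency piece, since convolution shifts Fourier support by at most $k/2$, the product $a_L\cdot \P_{\geq k}f$ has spectrum contained in $\{|\xi|\geq k/2\}$, so that $|\nabla|^{-1}$ restricted to this range is bounded in operator norm by $2/k$. Using the $L^p$-boundedness of $\P_{\geq k}$ and the Littlewood-Paley projections for $1<p<\infty$ yields
\[
\||\nabla|^{-1}\P_{\neq 0}(a_L\,\P_{\geq k}f)\|_{L^p}
\lesssim k^{-1}\|a_L\|_{L^\infty}\|f\|_{L^p}.
\]
Applying Bernstein's inequality dyadic block by block then gives $\|\Delta_j a\|_{L^\infty}\lesssim 2^{-2j}\|\nabla^{2}a\|_{L^\infty}$ for $j\geq 0$; summing over $0\leq 2^j\leq k/2$ provides the required bound $\|a_L\|_{L^\infty}\lesssim \|\nabla^{2}a\|_{L^\infty}$. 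For the high-frequency piece $a_H$, Bernstein again yields $\|a_H\|_{L^\infty}\leq \sum_{2^j> k/2}\|\Delta_j a\|_{L^\infty}\lesssim k^{-2}\|\nabla^{2}a\|_{L^\infty}$, and since $|\nabla|^{-1}\P_{\neq 0}$ is a Marcinkiewicz-type multiplier bounded on $L^p(\T^3)$ for $1<p<\infty$ (its symbol $|\xi|^{-1}$ on $\mathbb{Z}^3\setminus\{0\}$ satisfies the required derivative bounds), one obtains
\[
\||\nabla|^{-1}\P_{\neq 0}(a_H\,\P_{\geq k}f)\|_{L^p}
\lesssim \|a_H\|_{L^\infty}\|f\|_{L^p}\lesssim k^{-2}\|\nabla^{2}a\|_{L^\infty}\|f\|_{L^p},
\]
which is even stronger than required. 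Combining the two pieces finishes the proof.

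The main subtle point will be the step $\|a_L\|_{L^\infty}\lesssim \|\nabla^{2}a\|_{L^\infty}$: Bernstein controls the dyadic blocks $\Delta_j a$ for $j\geq 0$ but not the zero-frequency mode (the spatial mean $\bar a$), and indeed the constant function $a\equiv 1$ shows that without a mean-free assumption the inequality as stated cannot hold. This is not a real obstruction in the applications within this paper: at every invocation of Lemma~\ref{commutator estimate1} (namely \eqref{I1-esti-endpt1} and \eqref{I1-esti}), the role of $a$ is played by $\nabla(a_{(k)}^2)$, which is automatically mean-free on $\T^3$. Under this mean-free hypothesis, the Bernstein summation over $j\geq 0$ genuinely reaches the full $\|a_L\|_{L^\infty}$, and the argument closes as above.
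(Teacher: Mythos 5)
The paper does not prove this lemma --- it is quoted verbatim from \cite{lt20} (Lemma 6) and \cite{bv19b} (Lemma B.1) --- so there is no internal proof to compare against; your argument is essentially the standard one from those references (split $a=\P_{\leq k/2}a+\P_{>k/2}a$, use the frequency-support gain of $|\nabla|^{-1}$ on the low--high product, and reverse Bernstein on the high--high product), and it is correct. Your caveat is also a genuine and correct catch: as literally stated the inequality fails for $a\equiv 1$ (left side nonzero, right side zero), so one must either replace $\|\nabla^2 a\|_{L^\infty}$ by $\|a\|_{C^2}$ or assume $a$ mean-free; in the two places the paper invokes the lemma, \eqref{I1-esti-endpt1} and \eqref{I1-esti}, the role of $a$ is played by $\nabla(a_{(k)}^2)$, which is mean-free by periodicity, so the application is unaffected. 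One minor point worth tightening: the sharp cut-off $\P_{\geq k}$ is not uniformly $L^p$-bounded for $p\neq 2$ in dimension three, so either take $\P_{\geq k}=\mathrm{Id}-\P_{<k}$ with a smooth Littlewood--Paley cut-off, or note that in the applications $\P_{\geq k}f=f$ identically because $f$ already has high-frequency support; the same remark applies to replacing $|\nabla|^{-1}\P_{\geq k/2}$ by a smooth Mikhlin multiplier with symbol of size $k^{-1}$.
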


We close this section
with the well-posedness result for equation \eqref{equa-NS}
in the critical space $C_tL^{\frac{3}{2\a-1}}_x$.
The proof follows closely from the strategy
by Kato \cite{K84} and Cannone \cite{can97}.

\begin{theorem}  (Well-posedness in critical space $L^{\frac{3}{2\alpha-1}}_x$) \label{Thm-GWP-HNSE-Lp}
Let $\a\in (1,2)$.
Then, there exists $\delta>0$
such that for any $u_0\in  L^{\frac{3}{2\alpha-1}}_x$,
$\|u_0\|_{ L^{\frac{3}{2\alpha-1}}_x} \leq \delta$,
there exists a unique global solution $u$
to \eqref{equa-NS} satisfying
$u(0)=u_0$,
\begin{align*}
	&u \in C ([0, T]; L^{\frac{3}{2\a-1}} (\T^{3} ) ),\ \
	t^{\frac{2\a-1-3/p}{2\a}} u  \in C\left([0, T] ; L^{p}\left(\T^{3}\right)\right),
\end{align*}
where $\frac{3}{2\a-1} < p \leq \frac{6}{2\a-1}$,
and in addition
\begin{align*}
	\lim _{t \rightarrow 0} t^{\frac{2\a-1-3/p}{2\a}}\|u(t)\|_{L^p_x}=0.
\end{align*}
\end{theorem}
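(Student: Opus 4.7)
The plan is to adapt the classical Kato--Cannone fixed-point strategy to the hyperdissipative setting. First, I would rewrite the Cauchy problem for \eqref{equa-NS} in mild form via Duhamel's principle: a divergence-free $u$ solves \eqref{equa-NS} with $u(0)=u_0$ if and only if
\begin{align*}
  u(t) = e^{-t\nu(-\Delta)^{\alpha}} u_0 - \int_0^t e^{-(t-s)\nu(-\Delta)^{\alpha}} \mathbb{P}_H \operatorname{div}(u \otimes u)(s)\, ds =: u_L(t) + B(u,u)(t).
\end{align*}
Setting $\kappa_p := \frac{2\alpha-1-3/p}{2\alpha}$, I would work in the Kato-type space
\begin{align*}
  X := \Big\{u \in C([0,T]; L^{\frac{3}{2\alpha-1}}(\mathbb{T}^3)): \ t^{\kappa_p}u \in C([0,T]; L^p(\mathbb{T}^3)) \text{ and } \lim_{t\to 0^+} t^{\kappa_p}\|u(t)\|_{L^p}=0\Big\},
\end{align*}
equipped with the norm $\|u\|_X := \|u\|_{C_tL^{3/(2\alpha-1)}} + \sup_{t\in(0,T]} t^{\kappa_p}\|u(t)\|_{L^p}$ for a fixed auxiliary exponent $p \in \bigl(\tfrac{3}{2\alpha-1}, \tfrac{6}{2\alpha-1}\bigr]$.

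The key analytic inputs are the semigroup estimates for the fractional heat kernel, valid for $1 \leq r \leq q \leq \infty$:
\begin{align*}
  \|e^{-t\nu(-\Delta)^\alpha}f\|_{L^{q}} \lesssim t^{-\frac{3}{2\alpha}(\frac{1}{r}-\frac{1}{q})}\|f\|_{L^{r}}, \qquad
  \|\nabla e^{-t\nu(-\Delta)^\alpha}f\|_{L^{q}} \lesssim t^{-\frac{3}{2\alpha}(\frac{1}{r}-\frac{1}{q})-\frac{1}{2\alpha}}\|f\|_{L^{r}},
\end{align*}
together with the boundedness of $\mathbb{P}_H$ on $L^p(\mathbb{T}^3)$ for $1<p<\infty$. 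These immediately give $\|u_L\|_X \lesssim \|u_0\|_{L^{3/(2\alpha-1)}}$; the vanishing of $t^{\kappa_p}\|u_L(t)\|_{L^p}$ as $t\to 0^+$ follows by density of Schwartz functions in $L^{3/(2\alpha-1)}$.

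The heart of the argument is the bilinear estimate $\|B(u,v)\|_X \lesssim \|u\|_X \|v\|_X$. Applying the $L^{p/2}\to L^p$ bound for $\nabla e^{-t\nu(-\Delta)^\alpha}$ together with H\"older's inequality,
\begin{align*}
  t^{\kappa_p}\|B(u,v)(t)\|_{L^p}
  \lesssim t^{\kappa_p}\int_0^t (t-s)^{-\frac{3}{2\alpha p}-\frac{1}{2\alpha}} \|u(s)\|_{L^p}\|v(s)\|_{L^p}\, ds
  \lesssim \|u\|_X \|v\|_X \cdot t^{\kappa_p}\int_0^t (t-s)^{-\gamma} s^{-2\kappa_p}\, ds,
\end{align*}
where $\gamma := \tfrac{3}{2\alpha p}+\tfrac{1}{2\alpha}$. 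A direct computation using the identity $\gamma + \kappa_p = 1$ (which is precisely the scaling-critical balance) shows the last integral equals $C\,t^{-\kappa_p}$, provided $\gamma<1$ and $2\kappa_p<1$; both are guaranteed by the choice $p\in(\tfrac{3}{2\alpha-1},\tfrac{6}{2\alpha-1}]$. An analogous Beta-function computation, estimating $\nabla e^{-(t-s)\nu(-\Delta)^\alpha}$ from $L^{\frac{3}{2(2\alpha-1)}}$ into $L^{3/(2\alpha-1)}$, controls the $C_tL^{3/(2\alpha-1)}$ piece. The vanishing limit at $t=0^+$ for $B(u,v)$ transfers from that of $u_L$ by dominated convergence.

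With the linear and bilinear estimates in hand, the standard Picard iteration in $X$ converges, yielding a unique global solution for $\|u_0\|_{L^{3/(2\alpha-1)}} \leq \delta$, where $\delta$ depends only on the bilinear constant; uniqueness in the full class $C_tL^{3/(2\alpha-1)}$ is obtained by the usual argument splitting on a sufficiently short initial interval where the weighted norm is small. \textbf{The main technical obstacle} is verifying that the scaling exponents close critically at the borderline $\tfrac{3}{2\alpha-1}$: one must check $\gamma + \kappa_p = 1$ with both $\gamma<1$ and $2\kappa_p<1$, which together force the auxiliary range $p\in(\tfrac{3}{2\alpha-1}, \tfrac{6}{2\alpha-1}]$, and simultaneously handle the zero-mode on the torus (which is harmless since $\int_{\mathbb{T}^3}u_0\,dx$ is conserved by \eqref{equa-NS} and, without loss of generality, may be taken to vanish).
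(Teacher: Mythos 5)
Your overall strategy matches the paper exactly: reformulate \eqref{equa-NS} in mild form, set up the Kato-type space $X$ with the weighted $L^p$ auxiliary norm, invoke the fractional semigroup smoothing estimates, and run the Picard/Cannone fixed-point argument. Your $L^p$ bilinear estimate, including the verification that $\gamma+\kappa_p=1$ and that both exponents of the Beta integral are subunitary throughout the range $p\in\bigl(\tfrac{3}{2\a-1},\tfrac{6}{2\a-1}\bigr]$, is correct and is the same computation as in the paper's \eqref{verify-ulq.8}.

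There is, however, a genuine error in your sketch of the $C_tL^{\frac{3}{2\a-1}}$ estimate for $B(u,v)$. You propose to estimate $\nabla e^{-(t-s)\nu(-\Delta)^\a}$ from $L^{\frac{3}{2(2\a-1)}}$ into $L^{\frac{3}{2\a-1}}$, which implicitly means placing $u\otimes v$ in $L^{\frac{3}{2(2\a-1)}}$ via H\"older on the $L^{\frac{3}{2\a-1}}$ norms of $u$ and $v$. But the resulting time singularity is
\begin{align*}
-\frac{1}{2\a}-\frac{3}{2\a}\Bigl(\frac{2(2\a-1)}{3}-\frac{2\a-1}{3}\Bigr)
=-\frac{1}{2\a}-\frac{2\a-1}{2\a}=-1,
\end{align*}
and $\int_0^t (t-s)^{-1}\,ds$ diverges; this is precisely the classical obstruction to closing the fixed point directly in $C_tL^{\frac{3}{2\a-1}}$ without the auxiliary weighted norm, and it does not become a convergent Beta integral even formally, because the $C_tL^{\frac{3}{2\a-1}}$ component of the norm of $X$ carries no weight in $t$. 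The correct route (as in \eqref{verify-ul3.8}) is to estimate $\nabla e^{-(t-s)\nu(-\Delta)^\a}:L^{p/2}\to L^{\frac{3}{2\a-1}}$, which is the actual source of the constraint $p\le\tfrac{6}{2\a-1}$ (so that $p/2\le\tfrac{3}{2\a-1}$), giving the singularity $(t-s)^{-\frac{1-\a+3/p}{\a}}$ with exponent strictly less than $1$; paired with $s^{-2\kappa_p}$ from the two weighted $L^p$ norms, the total exponent $\frac{1-\a+3/p}{\a}+\frac{2\a-1-3/p}{\a}=1$ closes exactly and the Beta integral is $O(1)$. You should also be a bit more careful about the $t\to0^+$ vanishing for $B(u,v)$: it does not transfer from $u_L$ by dominated convergence but follows directly from the bilinear estimate once one uses that $\sup_{0<s\le t}s^{\kappa_p}\|u(s)\|_{L^p}\to0$ as $t\to0^+$, which is built into the definition of $X$.
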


\begin{proof}
Let us formulate equation \eqref{equa-NS} in the mild form
\begin{align}\label{equa-mild-HNS}
	u(t)=e^{-t(-\Delta)^\a}u_0-\int_{0}^{t} e^{-(t-s)(-\Delta)^\a} \P_H \div(u \otimes u)(s)\d s.
  \end{align}
We choose the Banach space $X$
which consist of functions $v$ satisfying
\begin{align}
&v\in C ([0, T] ; L^{\frac{3}{2\a-1}} (\T^{3} ) ),\label{est-ul3.8}\\
&t^{\frac{2\a-1-3/p}{2\a}} v  \in C\left([0,T] ; L^{p}\left(\T^{3}\right)\right),\label{est-ulq.8}\\
&\lim _{t \rightarrow 0} t^{\frac{2\a-1-3/p}{2\a}}\|v(t)\|_{L^p_x}=0, \label{est-ulimit.8}
\end{align}
and are equipped with the norm
\begin{align*}
\|u\|_X:= \|u\|_{C_tL^{\frac{3}{2\a-1}}_x}
   +\sup_{t>0}t^{\frac{2\a-1-3/p}{2p}}\|u(t)\|_{L^{p}_x}.
\end{align*}

In order to prove Theorem \ref{Thm-GWP-HNSE-Lp},
by virtue of Lemma 1.5 of \cite{can97},
it suffices to prove that
\begin{enumerate}
	\item [$(i)$] If $u_0\in X$, then $e^{-t(\Delta)^\alpha} u_0 \in X$.
    \item [$(ii)$] The bilinear operator defined by
    \begin{align*}
		B(u,v)(t) :=-\int_{0}^{t} e^{-(t-s)(-\Delta)^\a}\mathbb{P}_H \div( u \otimes v)(s) \d s
	\end{align*}
	is bicontinuous in $X\times X \to X$.
\end{enumerate}
The important ingredients of proof are the following
standard estimates for the
semigroup $\{e^{-t(-\Delta)^\alpha}\}$,
that is,
for any $\rho, \in [1, \infty)$ and $1\leq \rho_2\leq \rho_1 <\9$,
\begin{align}
   & \|e^{-t(-\Delta)^\alpha} v\|_{L^\rho_x} \leq C \|v\|_{L^\rho_x}, \label{semigroup-bdd} \\
   & \|e^{-t(-\Delta)^\alpha} v\|_{L^{\rho_1}_x} \leq C
   t^{-\frac{3}{2\alpha}(\frac{1}{\rho_2}-\frac{1}{\rho_1})} \|v\|_{L^{\rho_2}_x}, \label{semigroup-rho1-rho2} \\
   &   \|\na e^{-t(-\Delta)^\alpha} v\|_{L^{\rho_1}_x} \leq C
   t^{-\frac{1}{2\alpha}-\frac{3}{2\alpha}(\frac{1}{\rho_2}-\frac{1}{\rho_1})} \|v\|_{L^{\rho_2}_x}.  \label{semigroup-nabla}
\end{align}

The  property $(i)$ follows immediately
from estimates \eqref{semigroup-bdd} and \eqref{semigroup-rho1-rho2}
with $\rho_1 = p$ and $\rho=\rho_2=3/(2\alpha-1)$.

Regarding the second property $(ii)$,
we use estimate \eqref{semigroup-nabla} with
$\rho_1=3/(2\alpha-1)$ and $\rho_2=p/2$
to get
\begin{align}\label{verify-ul3.8}
\| B(v_1,v_2)(t)\|_{L^{\frac{3}{2\a-1}}_x}
& \leq C\int_{0}^{t} (t-s)^{-\frac{1-\a+3/p}{\a}} \|v_1(s)\|_{L^p_x}\|v_2(s)\|_{L^p_x}\d s \notag\\
&\leq C \int_{0}^{t} (t-s)^{-\frac{1-\a+3/p}{\a}} s^{-\frac{2\a-1-3/p}{\a}}\d s
 \sup_{0\leq s\leq t} (s^{\frac{2\a-1-3/p}{2\a}}\|v_1(s)\|_{L^p_x} )
 \sup_{0\leq s\leq t} (s^{\frac{2\a-1-3/p}{2\a}}\|v_2(s)\|_{L^p_x})  \notag\\
&\leq C \|v_1\|_{X} \|v_2\|_{X}.
\end{align}
Similarly, applying \eqref{semigroup-nabla} with $\rho_1=p$
and $\rho_2=p/2$ we get
\begin{align}\label{verify-ulq.8}
\| B(v_1,v_2)(t)\|_{L^{p}_x}
& \leq C\int_{0}^{t} (t-s)^{-\frac{1+3/p}{2\a}} \|v_1(s)\|_{L^p_x}\|v_2(s)\|_{L^p_x}\d s \notag\\
&\leq C \int_{0}^{t} (t-s)^{-\frac{1+3/p}{2\a}} s^{-\frac{2\a-1-3/p}{\a}}\d s
\sup_{0\leq s\leq t} (s^{\frac{2\a-1-3/p}{2\a}}\|v_1(s)\|_{L^p_x} )
\sup_{0\leq s\leq t} (s^{\frac{2\a-1-3/p}{2\a}}\|v_2(s)\|_{L^p_x})  \notag\\
&\leq Ct^{-\frac{2\a-1-3/p}{2\a}} \|v_1\|_{X} \|v_2\|_{X}.
\end{align}
Hence, estimates \eqref{verify-ul3.8} and \eqref{verify-ulq.8}
together yield that
\begin{align}
   \| B(v_1,v_2)\|_{X} \leq C \|v_1\|_{X} \|v_2\|_{X}.
\end{align}
Moreover,
arguing as in the \eqref{verify-ulq.8}
we also infer from that,
for any $v_1,v_2\in X$,
\begin{align*}
    t^{\frac{2\a-1-3/p}{2\a}}
\| B(v_1, v_2)(t)\|_{L^{p}_x}
\leq C
      \sup\limits_{0\leq s\leq t}(s^{\frac{2\a-1-3/p}{2\a}}\| v_1(s)\|_{L^{p}_x})
	  \sup\limits_{0\leq s\leq t}(s^{\frac{2\a-1-3/p}{2\a}}\| v_2(s)\|_{L^{p}_x})
	 \to 0, \ as\ \ t\to 0.
\end{align*}
Thus, the second property $(ii)$ is verified.
The proof is complete.
\end{proof}

\noindent{\bf Acknowledgment.}
Yachun Li thanks the support by NSFC (No. 11831011, 12161141004).
Peng Qu thanks the supports by  NSFC (No. 12122104, 11831011)
and Shanghai Science and Technology Programs 21ZR1406000, 21JC1400600, 19JC1420101.
Deng Zhang  thanks the supports by NSFC (No. 11871337, 12161141004)
and Shanghai Rising-Star Program 21QA1404500.
Yachun Li and Deng Zhang are also grateful for the supports by
Institute of Modern Analysis--A Shanghai Frontier Research Center.

\end{document}